\newtheorem{theorem}{Theorem}[section]
\newtheorem{claim}[theorem]{Claim}
\newtheorem{lemma}[theorem]{Lemma}
\newtheorem{proposition}[theorem]{Proposition}
\newtheorem{corollary}[theorem]{Corollary}
\newcommand{\R}{\mathbb{R}}
\newcommand{\f}{\frac}
\theoremstyle{definition}
\newtheorem{definition}[theorem]{Definition}
\newcommand{\op}{\operatorname}
\newtheorem{remark}[theorem]{Remark}
\newtheorem{conjecture}[theorem]{{\bfseries Conjecture}}
\numberwithin{equation}{section}
\newcommand{\beq}{\begin{equation}}
\newcommand{\eeq}{\end{equation}}
\begin{document}
\title[]{the interctitical defocusing nonlinear Schr\"odinger equations with radial initial data in dimensions four and higher}

\author{Chuanwei Gao}
\address{The Graduate School of China Academy of Engineering Physics, P. O. Box 2101, Beijing, China, 100088}
\email{canvee@163.com}

\author{Changxing Miao}
\address{Institute of Applied Physics and Computational Mathematics, Beijing 100088}
\email{miao\_changxing@iapcm.ac.cn}

\author{Jianwei Yang}
\address{Beijing International Center for Mathematical Research, Peking University, Beijing 100871, China}
\email{geewey\_{}young@pku.edu.cn}

\begin{abstract}
In this paper, we consider the defocusing nonlinear Schr\"odinger equation in space dimensions $d\geq 4$. We prove that if  $u$ is a radial solution which is \emph{priori} bounded in the critical Sobolev space, that is, $u\in L_t^\infty \dot{H}^{s_c}_x$, then $u$ is global and scatters.   In practise, we use weighted Strichartz space adapted for our setting which ultimately helps us solve the problems in  cases $d\geq 4$ and $0<s_c<\f{1}{2}$. The results in this paper extend  the work of \cite[Comm. in PDEs, 40(2015), 265-308]{M3} to higher dimensions.
  \end{abstract}
\keywords{ nonlinear Schr\"odinger equation, scattering,  frequency-localized  Morawetz estimae, weighted Strichartz space}
\maketitle
\section{ Introduction}
We consider the  Cauchy problem for the nonlinear Schr\"odinger equation (NLS) in  $\mathbb{R}_t\times \mathbb{R}_x^d \, \text{with} \, d\geq 4$:
\begin{align}\label{sch0}
  \begin{cases}
  (i\partial_t+\Delta)u=\mu |u|^pu,\\
  u(0,x)=u_0(x).
  \end{cases}
  \end{align}
  In particular,  we call the equation \eqref{sch0} defocusing, when $\mu=1$, and focusing when $\mu=-1$. In this paper, we are dedicated to dealing with the defocusing case.

The solutions of equation \eqref{sch0} are left invariant by the scaling transformation
\begin{align}\label{scale}
  u(t,x)\mapsto \lambda^{\frac{2}{p}}u(\lambda^2t,\lambda x)
  \end{align}
for $\lambda>0$.  This scaling  invariance defines a notion of criticality. To be more specified,  a direct computation shows that the only homogeneous $L_x^2$-based Sobolev space that is left invariant by  \eqref{scale} is $\dot{H}^{s_c}_x,$  where the critical regularity $s_c$ is given by $s_c:=\f{d}{2}-\f{2}{p}.$
 We call the problem mass-critical for $s_c=0$, energy-critical for $s_c=1$ and intercritical for $0<s_c<1$.
With $s_c=\f{d}{2}-\f{2}{p}$ in mind, we will transfer from $s_c$ to $p$ freely.

\vskip0.15cm
We proceed by make the  notion of solution precise.

\begin{definition}[Strong solution]
  A function $ u : I\times \R^d \rightarrow \mathbb{C} $ on a non-empty time interval $0\in I $ is a strong solution to \eqref{sch0} if it belongs to $C_t\dot{H}_x^{s_c}(K\times \R^d )\cap L_{t,x}^{\f{d+2}{2}p}(K\times \R^d)$ for any compact interval $K\subset I$ and obeys the Duhamel formula
  \begin{align}\label{eq:duhamel}
    u(t)=e^{it\Delta}u_0-i\int_0^te^{i(t-s)\Delta}(|u|^pu)(s)ds
  \end{align}
  for each $t\in I.$ We call $I$ the lifespan of $u.$ We say that $u $ is a maximal-lifespan solution if it cannot be extended to any strictly larger interval. We say $u$  is a global solution  if $I=\R.$
   \end{definition}

 Let  $u$ be a maximal-lifespan solution
  to the problem \eqref{sch0}, a standard technique shows that the  $\|u\|_{L_{t,x}^{\f{d+2}{2}p}(I\times \mathbb{R}^d)}<\infty$  implies scattering. That is $I=\infty$ and there exists $u_{\pm}\in \dot{H}_x^{s_c}(\mathbb{R}^d)$ such that
  \begin{align*}
   \lim_{t\rightarrow \pm \infty}\|u(t)-e^{it\Delta}u_{\pm}\|_{\dot{H}^{s_c}_x(\mathbb R\times\mathbb{R}^d)}=0.
  \end{align*}

 The above fact promotes us to define the notion of  scattering size and blow up  as follows:

\begin{definition}[Scattering size and blow up]
We define the scattering size of a solution $u:I\times \mathbb{R}^d\rightarrow \mathbb{C}$ to \eqref{sch0} by
\begin{align*}  S_I(u):=\iint_{I\times \mathbb{R}^d}|u(t,x)|^{\f{d+2}{2}p}dxdt.
\end{align*}
If there exists $t_0 \in I $~so that $S_{[t_0,\sup I)}(u)=\infty,$ then we say $u$ \emph{blows up} forward in time, correspondingly if  there exists $t_0 \in I $ so that $S_{(\inf I,t_0]}(u)=\infty,$ then we say $u$ \emph{blows up}  backward in time.
\end{definition}
The problem which we concern in this paper can be subsumed into the following conjecture.
\begin{conjecture}\label{conj}
  Let $d\geq 1, p\geq \f{4}{d}$. Assume $u:I\times \R^d \rightarrow  C $  is  a maximal-lifespan solution to \eqref{sch0} such that
  \begin{align}\label{assum}
    u \in L_t^\infty \dot{H}^{s_c}_x(I\times \R^d),
  \end{align}
 then $u$ is global and scatters, with
    \begin{align}
      S_{\mathbb{R}}(u)\leq C(\|u\|_{L_t^\infty\dot{ H}^{s_c}_x})
    \end{align}
    for some function $C:[0,\infty)\rightarrow[0,\infty).$
  \end{conjecture}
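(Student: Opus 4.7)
The plan is to follow the concentration-compactness and rigidity road map of Kenig--Merle, as implemented for scattering below the energy threshold by Kenig--Merle, Killip--Visan, Tao--Visan--Zhang, Murphy, Dodson, and others. The argument is by contradiction. First I would record local wellposedness in $\dot{H}^{s_c}_x$ together with a Strichartz-based stability/long-time perturbation lemma, which allows one to compare solutions with close initial data as long as the scattering norm is under control. Second, I would establish a linear profile decomposition for bounded sequences in $\dot{H}^{s_c}_x$ under the Schr\"odinger flow, of Keraani / Bahouri--Gerard type, using a refined Strichartz inequality adapted to the critical regularity; the relevant symmetry group consists of scaling, spatial translation, and (because $s_c<1$) Galilean boost. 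Coupling the profile decomposition with the stability lemma via a Palais--Smale-type argument, one shows that if Conjecture \ref{conj} fails then the threshold $E_c=\inf\{\|u\|_{L_t^\infty \dot{H}^{s_c}_x}: S_I(u)=\infty\}$ is attained by a \emph{critical element}: a maximal-lifespan solution that blows up both forward and backward in time and whose orbit $\{u(t)\}$ is precompact in $\dot{H}^{s_c}_x$ modulo continuous parameters $N(t),x(t),\xi(t)$.

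Next, a standard analysis of the frequency scale $N(t)$ reduces the possible critical elements to three enemies: a finite-time self-similar blow-up with $N(t)\sim |t|^{-1/2}$; a double high-to-low frequency cascade with $I=\R$ and $\inf_t N(t)=0$; and a quasi-soliton with $I=\R$ and $N(t)\equiv 1$. The first two are precluded by an additional-regularity bootstrap. Using the Duhamel formula \eqref{eq:duhamel}, an in/out parabolic decomposition, and Strichartz estimates with suitable weights, one shows that the critical element lies either in $L_t^\infty L^2_x$ (when $s_c$ is close to $0$) or in $L_t^\infty \dot H^1_x$ (when $s_c$ is close to $1$); then approximate conservation of mass (respectively energy) combined with the vanishing or blow-up of $N(t)$ forces $u\equiv 0$, contradicting the criticality of $u$.

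The quasi-soliton is the heart of the matter. To preclude it one aims at a frequency-localized interaction Morawetz-type estimate, schematically
\[
\iint_{I\times\R^d}\f{|P_{\gtrsim 1}u(t,x)|^{p+2}}{|x|}\,dx\,dt\less 1,
\]
and then uses the $\dot H^{s_c}$-compactness of the critical element to bound the left-hand side from below by a constant multiple of $|I|$; letting $|I|\to\infty$ yields the contradiction. Implementing this Morawetz bound requires simultaneous control of low-frequency errors (via an approximate mass conservation for $P_{\leq N(t)}u$) and high-frequency errors (via an approximate energy conservation for $P_{\geq N(t)}u$), both of which are delicate since neither mass nor energy is a conserved quantity at the critical regularity.

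The main obstacle, and the reason Conjecture \ref{conj} remains open in full generality, lies precisely in this quasi-soliton step. Without radial symmetry one must first establish that the Galilean parameter $\xi(t)$ remains bounded (or can be normalized to zero) by a compactness-in-frequency argument, which already uses specific structure not available in all ranges of $s_c$; and the frequency-localized interaction Morawetz estimate becomes progressively more fragile as $s_c$ approaches the endpoints $0$ and $1$ and as $d$ decreases, because the weights $|x|^{-1}$ interact badly with low dimensions and with the small power $p=\f{4}{d-2s_c}$. I therefore expect the first three steps (wellposedness, profile decomposition, reduction to the three enemies, and treatment of the self-similar and cascade cases) to be technical but tractable, while Step 6 -- a universally applicable frequency-localized Morawetz estimate valid for all $d\geq 1$ and all $p\geq \f{4}{d}$ -- is the main open problem. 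The present paper resolves this obstacle in the restricted regime $d\geq 4$, $0<s_c<\tfrac12$, with radial data, by introducing weighted Strichartz spaces that exploit radial decay to compensate for the unavailable mass conservation; extending that technique to the full generality of Conjecture \ref{conj} is what Step 6 above is intended to describe.
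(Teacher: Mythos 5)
The statement you were asked to prove is labelled a \emph{conjecture} in the paper, and the paper does not prove it: it only establishes the special case of radial data with $d\geq 4$ and $0<s_c<\f{1}{2}$ (Theorem \ref{maint1}), and the full statement for all $d\geq 1$, $p\geq \f{4}{d}$ remains open. Your submission is consistent with this state of affairs, but it must be judged as what it is: a road map, not a proof. You yourself identify the missing ingredient --- a frequency-localized Morawetz/long-time estimate ruling out the quasi-soliton uniformly in $d$, $s_c$, and without symmetry assumptions --- and no argument is supplied for it, so the proposal contains a genuine (and, to your credit, explicitly acknowledged) gap at exactly the step where the conjecture is open. Two smaller inaccuracies are worth flagging. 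First, you include Galilean boosts in the symmetry group of the profile decomposition ``because $s_c<1$''; in fact the Galilean parameter enters only at $s_c=0$ (mass-critical), since boosts do not preserve $\dot H^{s_c}$ for $s_c>0$ --- this is precisely the point of the paper's remark following Proposition \ref{reduc} that the extra parameter appears only in the mass-critical case. Second, your claim that the self-similar and cascade scenarios are always handled by upgrading the critical element to $L_t^\infty L_x^2$ or $L_t^\infty\dot H^1_x$ and invoking (approximate) conservation is a plausible heuristic but is not justified in the generality claimed; it is one of the places where known arguments are regime-dependent.

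As a comparison with what the paper actually does in its special case: your sketch follows the same broad Kenig--Merle scheme (reduction to an almost periodic critical solution, Morawetz-type monotonicity, preclusion), but the implementation differs. The paper does not run the three-enemy trichotomy with an interaction Morawetz weight $|x|^{-1}$ against $|P_{\gtrsim1}u|^{p+2}$; instead it normalizes $N(t)\leq 1$, proves a frequency-localized Lin--Strauss-type estimate for $\nabla u_{lo}$ with weight $|x|^{-(1+\varepsilon)}$ in the radial weighted Strichartz spaces $\mathcal S$, $\mathcal N$ (Proposition \ref{Morawetz1}), deduces decay of $\|u_{\geq N}\|_{\mathcal S}$, uses this to show $\inf_{t\in I}N(t)>0$ via genuine energy conservation after gaining $\dot H^1$ regularity (Proposition \ref{lowbound}), and then shows $I$ must be bounded (Theorem \ref{Ibound}), contradicting Corollary \ref{fscale} together with $N(t)\leq1$. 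The weighted norms are the device that replaces the missing conserved quantity in the regime $0<s_c<\f12$, $d\geq4$, radial; they have no evident analogue in the nonradial or general-$s_c$ setting, which is why your ``Step 6'' is the open problem rather than a step.
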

  \begin{remark}
    When $s_c=0$ or  $s_c=1$,  \eqref{assum} is true as a direct consequence of conservation law.  In particular, when $s_c=0, u \in L_t^\infty L_x^2$ is guaranteed by  the mass conservation
     \begin{align}
    M[u(t)]=\int_{\mathbb{R}^d}|u(t,x)|^2dx.
  \end{align}
     When $s_c=1, u \in L_t^\infty \dot{H}^1_x$ follows from the energy conservation
     \begin{align}\label{cone}
    E[u(t)]=\int_{\mathbb{R}^d}\f{1}{2}|\nabla u(t,x)|^{2}+\frac{1}{p+2}|u(t,x)|^{p+2} dx.
  \end{align}
    For $s_c \notin \{0,1\}$, \eqref{assum} can not be deduced from any available conserved quantity and it is a natural artificial assumption as a substitution of conservation law.
  \end{remark}

Before addressing our main results, we will make a brief review on the Conjecture \ref{conj}. It is well known that in the critical case, the lifespan of solution depends not only on the Sobolev norm but also the profile of the initial data, thus the fact that \eqref{assum} implies the solution $u$ is global
and scatters is not at all obvious.

 In the energy-critical setting, the breakthrough was made by Bourgain's monumental work \cite{Bourgain} in which he introduced the induction on energy method.  Based on this method and the space-localized Morawetz inequality, the spherically symmetric energy-critical case was resolved in $d=3,4.$  Subsequently, by using the same strategy and the modified interaction Morawetz estimate, Colliander et al, \cite{Colliander} resolved the nonradial case in $d=3.$  For further discussion about the defocusing energy-critical NLS,  we refer to \cite{Grillakis,KVAP,RM,V1,V2,VIN}. For focusing case see \cite{KenigMerle2,KV3,Dodson5}.

For the mass-critical case,  Conjecture \ref{conj} was primarily proved for spherically-symmetric $L_x^2$ initial data in dimensions $d\geq 2$,  see \cite{KTM,TVZ}.  By introducing long-time Strichartz estimate method, Dodson in \cite{Dodson1,Dodson2,Dodson3} settled the nonradial case. The reader may turn to \cite{KTM,KVX,Dodson4} for focusing setting.

The first work  dealing with  Conjecture \ref{conj} at nonconserved critical regularity is attributed to Kenig-Merle \cite{kenigmerler2} at the case $d=3,s_c=\f{1}{2}$ by making use of  their pioneered concentration-compactness argument along with Lin-Strauss Morawetz inequality. Note that no additional radial assumption is required in \cite{kenigmerler2} due to the fact that  Lin-Strauss Morawetz inequality has a scale of $\f{1}{2}$.  Murphy in \cite{M2} extended the  result to $d\geq 4.$
   
Now we focus on  the case $0<s_c<\f{1}{2}. $ In  \cite{M3}, under the radial assumption, Murphy handled the case $d=3,0<s_c<\f{1}{2}$ by using long-time Strichartz estimate method and frequency-localized Lin-Strauss Morawetz estimate. However,it seems not work in higher dimensions, especially $d\geq 5$.  To be more precise, following the approach in \cite{M3}, one can obtain the corresponding result of four dimensions effortlessly.  To further generalize that to the higher dimensions, however,  is not at all trivial, since it's tricky to establish long-time Strichartz estimate due to the subquadratic property of the nonlinearity. To circumvent the  barrier, we exploit the spherical symmetry condition and adopt the strategy of using weighted Strichartz norms as in \cite{TVZ}. The key observation is that one can formulate the weighted Strichartz norm which scales exactly the same as the   Strichartz norm of the critical regularity. In doing so, we are liberated from  subtle  technicality comes from nonlocal nature of the  fractional derivative thanks to the fact we place the weight and the derivative at the same height in the sense of scaling which can be exemplified by \eqref{refine1}, \eqref{refine2}.  It's worth  mentioning that by adapting the argument in this paper, one may recover the result in \cite{M3}  for  $0<s_c<\f{1}{2}$ in dimension three. We shall clarify this issue at the appropriate point.

For further discussion about Conjecture \ref{conj}, we refer  to \cite{kv2010,LZ,MMZ,DMMZ}.\\

Now we are in a position to state our main results.
 \begin{theorem}\label{maint1}  Let $d\geq 4$, $0<s_c<\f{1}{2}$.
Assume  that $u:I\times \R^d$ is  a  spherically symmetric  maximal-lifespan solution to \eqref{sch0} such that $u\in L_t^\infty \dot{H}_x^{s_c}(I\times \R^d)$.
Then $u$ is global and scatters, with
    \begin{align}
      S_{\mathbb{R}}(u)\leq C(\|u\|_{L_t^\infty\dot{ H}^{s_c}_x})
    \end{align}
    for some function $C:[0,\infty)\rightarrow[0,\infty).$

\end{theorem}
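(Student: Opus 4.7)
The plan is to follow the concentration-compactness and rigidity strategy of Kenig-Merle, in the radial intercritical form used by Murphy in \cite{M3} for $d=3$, lifted to $d\geq 4$ by replacing ordinary Strichartz estimates with the weighted Strichartz machinery in the spirit of \cite{TVZ}. First, I would argue by contradiction and reduce to a minimal counterexample: a standard profile decomposition in $\dot H^{s_c}_x$ combined with a Palais-Smale compactness argument should produce a maximal-lifespan radial solution $u$ satisfying $u\in L_t^\infty \dot H^{s_c}_x$ that blows up (with infinite $L_{t,x}^{(d+2)p/2}$-norm) in both time directions and is almost periodic modulo the scaling symmetry. Radiality kills the translation parameter, so the orbit is compact modulo scaling alone, encoded in a frequency scale function $N(t)$. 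A further classification, modulo rescaling and time reversal, leaves only two enemies: a soliton-like solution with $N(t)\equiv 1$, and a frequency cascade with $\liminf_{t\to\sup I} N(t)=0$.

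The technical heart of the argument, and the main obstacle in higher dimensions, is to establish a long-time Strichartz-type estimate for $u$ in a weighted space. For $d\geq 5$ and $0<s_c<\tfrac12$ one has $p=\tfrac{4}{d-2s_c}<1$, so the nonlinearity $|u|^p u$ is only H\"older continuous and the usual fractional chain rule is not available to move a derivative $|\nabla|^{s_c}$ onto a single factor when bookkeeping Strichartz norms. The workaround is to work in weighted spaces of the form $L_t^2 L_x^{q}(|x|^{-s_c q}\,dx)$ where the weight $|x|^{-s_c}$ and the operator $|\nabla|^{s_c}$ carry exactly the same scaling; Hardy's inequality in the radial class then trades derivatives for weights without ever having to differentiate $|u|^p u$. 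A bootstrap in the spirit of Dodson's long-time Strichartz method, carried out in this weighted framework, should control $P_{\leq N}u$ on arbitrarily long intervals, uniformly in the length of the interval after normalization by $\int N(t)^{-\alpha}\,dt$ for a suitable $\alpha$.

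Finally, the weighted long-time estimate will upgrade $u$ to a space with slightly more regularity and decay than $\dot H^{s_c}_x$, which is what is needed to legitimize a frequency-localized Lin-Strauss Morawetz inequality of the schematic form
\[
\int_J\!\int_{\R^d}\frac{|P_{\geq N} u(t,x)|^{p+2}}{|x|}\,dx\,dt \;\lesssim\; N^{2s_c-1}\|u\|_{L_t^\infty \dot H^{s_c}_x}^{2} + (\mathrm{errors}),
\]
with the error terms absorbed by the long-time bound from the previous step. Comparing this upper bound against the lower bound forced on the left-hand side by almost periodicity, choosing $N$ sufficiently small in the soliton case and exploiting $N(t)\to 0$ together with an inward Duhamel argument in the cascade case, should yield a contradiction in both scenarios and so complete the proof.
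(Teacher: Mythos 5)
Your proposal correctly identifies the overall Kenig--Merle reduction and, crucially, the key technical idea that makes higher dimensions work: placing a power weight $|x|^{-\sigma}$ and a fractional derivative $|\nabla|^{\sigma'}$ at the same scaling height in a weighted $L^2_{t,x}$ norm, so that radial Hardy/Sobolev estimates trade weights for derivatives and one never has to hit the subquadratic nonlinearity $|u|^p u$ directly with $|\nabla|^{s_c}$. That is exactly the role of the spaces $\mathcal{S}$ and $\mathcal{N}$ in the paper, and of the refined nonlinear estimates \eqref{refine1}--\eqref{refine2}.

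Where your route diverges from the paper is in the claim that one should still run ``a bootstrap in the spirit of Dodson's long-time Strichartz method.'' The paper explicitly states that establishing a long-time Strichartz estimate in the Dodson/Murphy sense is problematic for $d\geq 5$ precisely because of the subquadratic nonlinearity, and the entire point of the weighted Strichartz framework is to bypass it. Concretely, the paper never controls $P_{\leq N}u$ on time intervals normalized by $\int N(t)^{\alpha}\,dt$; instead it runs a direct continuity bootstrap on the frequency-localized Morawetz quantity $Q_I=\int_I\int |\nabla u_{lo}|^2/|x|^{1+\varepsilon}\,dx\,dt$ over the \emph{entire} lifespan, controlling the low- and high-frequency weighted Strichartz norms in terms of $Q_I$ and the smallness parameter $\delta$ from \eqref{Nassum}, and then closing via a TVZ-style Morawetz inequality. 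Relatedly, the Morawetz inequality used is a gradient version, $N^{2s_c}\int_I\int |\nabla u_{<N}|^2 / |Nx|^{1+\varepsilon}\to 0$, rather than the potential-term version $\int\int |P_{\geq N}u|^{p+2}/|x|$ that you sketch; the two are morally similar but the bookkeeping is arranged differently. Your endgame is also organized around a soliton/cascade dichotomy, whereas the paper works after the one-sided normalization $N(t)\leq 1$: Section~\ref{verification} rules out $\inf N(t)=0$ by energy non-evacuation (the ``inward Duhamel'' argument you allude to appears there as $\inf_{t_0}\||\nabla|^{s_c}u_{\geq N}(t_0)\|_{L^2}=0$), and Section~\ref{rule out} then forces $|I|<\infty$ and hence $N(t)\to\infty$, contradicting $N(t)\leq 1$ via Corollary~\ref{fscale}. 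Both approaches share the underlying ideas, but the paper's avoidance of any long-time Strichartz machinery is a genuine simplification and in fact is announced as the main advance over \cite{M3}; your plan, as stated, would reintroduce exactly the difficulty the paper is designed to circumvent, and it is not clear the long-time Strichartz bootstrap closes for $p<1$ even in the weighted setting.
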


Adapting the argument in \cite{CW}, one can obtain the local-in-time theory which serves as a basis for the proof of Theorem \ref{maint1}.

\begin{theorem}[Local Well-posedness]\label{wellposed}
Let $d$ and $s_c$ be in the Theorem \ref{maint1}, for any~$ u_0 \in\dot{H}^{s_c}(\mathbb{R}^d)$~and ~$t_0 \in\mathbb{R},$~there exists a unique maximal-lifespan solution ~$u:I\times \mathbb{R}^d\rightarrow\mathbb{C}$ to \eqref{sch0} with ~$u(t_0)=u_0$. Furthermore
\begin{enumerate}
\item (Local existence) $I $ is  an open neighborhood of $t_0.$
\item(Blow up ) If $\sup I$ is finite, then $u$ blows up forward in time . If ${\rm inf}I$ is finite, then ~$u$~ blows up backward in time.
    \item (Scattering and wave operators) If sup ~$I=\infty $~and ~$u$~ does not blow up forward in time, then ~$u$~ scatters forward in time. That is, there exists ~$u_{+}\in \dot{H}^{s_c}(\mathbb{R}^d)$~so that
        \begin{equation}\label{scater}
          \lim_{t\rightarrow \infty}\|u(t)-e^{it\Delta}u_{+}\|_{\dot{H}^{s_c}(\mathbb{R}^d)}=0.
        \end{equation}
        Conversely, for any ~$u_+\in \dot{H}^{s_c}(\mathbb{R}^d)$~there exists a unique solution to \eqref{sch0} defined in a neighborhood of~$ t=\infty $~ such that \eqref{scater}holds. The analogous statements hold backward in time.
        \item(Small data scattering) If ~$\|u_0\|_{\dot{H}^{s_c}(\mathbb{R}^d)}$~is sufficiently small, then $u$ is global and scatters, with ~$S_{\mathbb{R}}(u)\lesssim\|u\|_{\dot{H}^{s_c}(\mathbb{R}^d)}^{\f{d+2}{2}p}.$~
            \end{enumerate}
        \end{theorem}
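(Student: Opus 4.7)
The plan is to follow the Cazenave--Weissler contraction scheme in a Strichartz space adapted to the critical regularity $\dot H_x^{s_c}$, and then to read off (1)--(4) as standard consequences of the fixed point construction.

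First, I would fix a pair $(q,r)$ that is $L^2$-Strichartz admissible and has $\dot H^{s_c}$ scaling, together with the scattering-size exponent $(\tfrac{d+2}{2}p,\tfrac{d+2}{2}p)$. On a time interval $J\ni t_0$, I would introduce the resolution space
\begin{equation*}
X(J):=\Bigl\{u\in C_t\dot H^{s_c}_x(J\times\R^d):\ \||\nabla|^{s_c}u\|_{L^q_tL^r_x(J)}+\|u\|_{L^{(d+2)p/2}_{t,x}(J)}<\infty\Bigr\},
\end{equation*}
and the Duhamel map
\begin{equation*}
\Phi(u)(t):=e^{i(t-t_0)\Delta}u_0-i\int_{t_0}^{t}e^{i(t-s)\Delta}\bigl(|u|^{p}u\bigr)(s)\,ds.
\end{equation*}
The contraction would be run on the ball $\{u:\|u\|_{X(J)}\le 2C\|u_0\|_{\dot H^{s_c}}\}$ for $J$ chosen so small that $\|e^{i(t-t_0)\Delta}u_0\|_{L^{(d+2)p/2}_{t,x}(J)}\le\eta$, with $\eta$ to be fixed small.

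The heart of the argument is the nonlinear estimate
\begin{equation*}
\bigl\||\nabla|^{s_c}(|u|^{p}u)\bigr\|_{L^{q'}_tL^{r'}_x(J)}\lesssim \|u\|_{L^{(d+2)p/2}_{t,x}(J)}^{p}\,\||\nabla|^{s_c}u\|_{L^{q}_tL^{r}_x(J)},
\end{equation*}
combined with the analogous difference estimate for $|u|^p u-|v|^p v$. For the ranges $d\ge 4$ and $0<s_c<1/2$ one has $p<\tfrac{4}{d-1}<1$, so $|u|^p u$ is only Hölder continuous of exponent $p+1<2$; thus the usual Leibniz-type product rule must be replaced by a fractional chain rule for non-smooth nonlinearities (à la Christ--Weinstein / Visan). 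This, I expect, is the main technical obstacle: one must verify that the chosen exponents lie in the range where the fractional chain rule applies and split the derivative so that the total scaling matches $\dot H^{s_c}$. Once the estimate is in hand, the Strichartz inequality $\||\nabla|^{s_c}\Phi(u)\|_{L^q_tL^r_x}\lesssim\|u_0\|_{\dot H^{s_c}}+\||\nabla|^{s_c}(|u|^p u)\|_{L^{q'}_tL^{r'}_x}$, together with the analogous control of the $L^{(d+2)p/2}_{t,x}$ norm using Sobolev embedding, closes the contraction on $X(J)$.

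With the fixed point in hand the four conclusions follow by routine manipulations. Local existence (1) is immediate since $\|e^{i(t-t_0)\Delta}u_0\|_{L^{(d+2)p/2}_{t,x}(J)}\to 0$ as $|J|\to 0$ by the monotone convergence theorem applied to a Strichartz-finite approximant. The blow-up criterion (2) comes from contraposition: if $S_{[t_0,\sup I)}(u)<\infty$ one partitions the interval into finitely many pieces on each of which the $L^{(d+2)p/2}_{t,x}$-norm is $\le\eta$, propagates the Strichartz bounds, and then uses the small-data theory to extend past $\sup I$. For (3), finiteness of $S$ forward in time plus the nonlinear estimate show that $\{e^{-it\Delta}u(t)\}$ is Cauchy in $\dot H^{s_c}$ as $t\to+\infty$, yielding $u_+$; the existence of a solution attached to prescribed scattering data is obtained by running the same contraction on $[T,\infty)$ with $u_0$ replaced by $u_+$ and $e^{i(t-t_0)\Delta}u_0$ replaced by $e^{it\Delta}u_+$, whose Strichartz norm on $[T,\infty)$ is small for $T$ large. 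Finally, (4) is simply the case $t_0=0$, $J=\R$ of the contraction, since $\|e^{it\Delta}u_0\|_{L^{(d+2)p/2}_{t,x}(\R)}\lesssim\|u_0\|_{\dot H^{s_c}}$ by Strichartz.
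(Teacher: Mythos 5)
Your proposal diverges from the paper's route in a way that introduces a real technical gap. The paper does not run the contraction directly in a space carrying $\||\nabla|^{s_c}\cdot\|$-type norms. Instead (as the remark after the theorem statement indicates and the Appendix makes precise), it first obtains local well-posedness for $H^{s_c}_x$ data by adapting Cazenave--Weissler, and then passes to $\dot H^{s_c}_x$ data via the stability lemma, Lemma~\ref{purtabation}. Crucially, the stability argument closes the contraction in the derivative-free spaces $X(I),Y(I)$ defined in the Appendix, using the bare scaling-invariant bound \eqref{stris} which involves no fractional derivative; the $\dot H^{s_c}_x$ regularity is then propagated separately by the persistence-of-regularity lemma. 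This ``contract in a weak metric on a ball in the strong topology'' structure is precisely how one circumvents the failure of Lipschitz estimates for $u\mapsto F(u)$ at the level of $|\nabla|^{s_c}$ when $p<1$.

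Your scheme instead contracts on a ball in $X(J)\ni\||\nabla|^{s_c}u\|_{L^q_tL^r_x}$, which requires a difference estimate of the form $\||\nabla|^{s_c}(F(u)-F(v))\|\lesssim(\cdots)\,\||\nabla|^{s_c}(u-v)\|+(\cdots)\,\|u-v\|$. The troublesome piece is $\||\nabla|^{s_c}F'(w)\|$ with $F'(w)=O(|w|^p)$ only H\"older continuous of order $p$ when $p<1$. Lemma~\ref{Holderd} requires the derivative order to be strictly below the H\"older exponent, i.e., $s_c<p=\tfrac{4}{d-2s_c}$, equivalently $s_c(d-2s_c)<4$. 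This fails for $s_c$ close to $\tfrac12$ once $d\geq 10$ (e.g.\ $d=10$, $s_c\in(\tfrac{5-\sqrt{17}}{2},\tfrac12)$). So ``verify that the exponents lie in the range where the fractional chain rule applies'' is not a mere bookkeeping task: in part of the stated parameter range it \emph{cannot} be verified, and one must retreat to the derivative-free contraction as the paper does. Separately, the arithmetical claim $p<\tfrac{4}{d-1}<1$ is wrong for $d=4$ (there $p=\tfrac{2}{2-s_c}>1$, so the nonlinearity is actually $C^2$ and no H\"older chain rule is needed at all) and borderline for $d=5$; the statement $\tfrac{4}{d-1}<1$ only holds for $d\geq 6$.

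Your conclusions (1)--(4) would indeed follow by the standard manipulations you describe once a correct local theory is in hand, so the issue is localized to the choice of contraction metric. Replacing the ball in $X(J)$ with the ball in a derivative-free scaling-critical Strichartz space (your $L_{t,x}^{(d+2)p/2}$ component alone, or the paper's $X(I)$) endowed with the $L^{(d+2)p/2}_{t,x}$ metric, and deriving the $\||\nabla|^{s_c}u\|_{S(I)}$ bound a posteriori by a persistence-of-regularity argument as in the Appendix, would close the gap and align with the paper.
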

\begin{remark}
 To prove Theorem \ref{wellposed}, one may first assume the initial data belongs to $H^{s_c}_x$ so that the techniques in \cite{CW} applies and then establish Theorem \ref{wellposed} by using the following stability lemma.
\end{remark}

\begin{lemma}\label{purtabation}
  Let $d\geq 4,$  $I$ be a compact interval, and $\tilde{u}: I\times \R^d\rightarrow \mathbb{C}$ be a solution to the equation
\begin{align}\label{purtabation1}
  \begin{split}
  \left\{\begin{array}{cc}
    (i\partial_t+\Delta)\tilde{u}=F(\tilde u)+e\\
    \tilde{u}(0)=\tilde{u}_0\in \dot{H}^{s_c}_x.
    \end{array}\right.
    \end{split}
  \end{align}
  Suppose
  $$\|\tilde{u}\|_{L_t^\infty \dot{H}^{s_c}_x(I\times \R^d)}\leq E\quad \text{and} \quad \|\tilde u\|_{L_{t,x}^{\f{(d+2)p}{2}}(I\times \R^d)}\leq L,$$
  for some $E, L>0.$  There exists $\varepsilon_1(E,L)$ such that if $u_0 \in \dot{H}^{s_c}_x$ and
  \begin{align}
    \|u_0-\tilde{u}_0\|_{\dot{H}^{s_c}_x}+\||\nabla|^{s_c}e\|_{\rm N(I)}\leq \varepsilon,
  \end{align}
  for some small $0<\varepsilon <\varepsilon_1(E,L),$  then there exists a solution $u$ to the equation \eqref{sch0} with the initial data $u_0$  and a constant $0<c(d)$ such that
  \begin{align}\label{smallness}
    \||\nabla|^{s_c}(u-\tilde{u})\|_{\rm S(I)}\leq C(E,L) \varepsilon^c ;\\
    \||\nabla|^{s_c}u\|_{\rm S(I)}\leq C(E,L);
  \end{align}
  where the definition of $\rm S(I)$ and $\rm N(I)$ can be found in the appendix.
\end{lemma}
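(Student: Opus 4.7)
The plan is to run a standard Strichartz-based perturbative argument on the difference $w := u-\tilde u$, which satisfies the Duhamel identity
\[
w(t) = e^{it\Delta}(u_0-\tilde u_0) - i\int_0^t e^{i(t-s)\Delta}\bigl[F(\tilde u+w)-F(\tilde u)-e\bigr]\,ds,
\]
with $F(u)=|u|^p u$. Applying $|\nabla|^{s_c}$ and the Strichartz estimates recalled in the appendix reduces the problem to controlling $\bigl\| |\nabla|^{s_c}[F(\tilde u+w)-F(\tilde u)] \bigr\|_{{\rm N}(J)}$ on time slabs $J$ where the scattering-size of $\tilde u$ is small. To this end I would first partition $I = \bigcup_{j=1}^{N} I_j$ into $N=N(E,L,\eta)$ consecutive closed subintervals on each of which $\|\tilde u\|_{L_{t,x}^{(d+2)p/2}(I_j\times\R^d)}\le\eta$, where $\eta=\eta(E)$ is a small absolute constant to be fixed later.

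On the first interval $I_1$, a Strichartz estimate together with the nonlinear difference bound and the hypotheses $\|\tilde u\|_{L_t^\infty\dot H^{s_c}_x}\le E$ and $\|u_0-\tilde u_0\|_{\dot H^{s_c}_x}+\||\nabla|^{s_c}e\|_{{\rm N}(I)}\le\varepsilon$ feeds a continuity/bootstrap argument that yields $\||\nabla|^{s_c} w\|_{{\rm S}(I_1)}\le C(E)\,\varepsilon$; the value of $w$ at the right endpoint of $I_1$ then serves as new initial data on $I_2$. Induction over $j=1,\dots,N$ produces $\||\nabla|^{s_c} w\|_{{\rm S}(I_j)}\le C(E)^{j}\varepsilon$, and summing gives the desired $\||\nabla|^{s_c}(u-\tilde u)\|_{{\rm S}(I)}\le C(E,L)\,\varepsilon^{c}$ after choosing $\varepsilon_1(E,L)$ so small that $C(E)^{N}\varepsilon\ll 1$; the exponent $c=c(d)\in(0,1)$ reflects an interpolation used to close the self-interaction term $|w|^{p+1}$ coming from the pointwise difference bound $|F(\tilde u+w)-F(\tilde u)|\lesssim |\tilde u|^p|w|+|w|^{p+1}$. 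The second inequality in \eqref{smallness} is then immediate from the triangle inequality and the $L_{t,x}^{(d+2)p/2}$ bound on $\tilde u$.

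The principal technical obstacle is establishing the nonlinear difference estimate
\[
\bigl\||\nabla|^{s_c}[F(\tilde u+w)-F(\tilde u)]\bigr\|_{{\rm N}(J)} \lesssim \|\tilde u\|_{L_{t,x}^{(d+2)p/2}(J)}^{p}\,\bigl\||\nabla|^{s_c} w\bigr\|_{{\rm S}(J)} + (\text{higher order in } w),
\]
in the regime of primary interest, $0<s_c<\tfrac{1}{2}$, where $p=\tfrac{4}{d-2s_c}$ is subquadratic for large $d$ and $F$ is not of class $C^1$ but only H\"older continuous. Consequently the naive identity $\nabla F(u)\sim |u|^p\nabla u$ is unavailable at the level of $|\nabla|^{s_c}$ applied to $F(\tilde u+w)-F(\tilde u)$. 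I would handle this via the fractional chain rule of Christ--Weinstein together with paraproduct/Kato--Ponce splittings adapted to subquadratic nonlinearities, writing $F(\tilde u+w)-F(\tilde u)$ as an integral of $F'$ along the segment joining $\tilde u$ and $\tilde u+w$ whenever admissible and otherwise combining the pointwise H\"older-type bound with Sobolev embedding at the critical exponent. The smallness of $\|\tilde u\|_{L_{t,x}^{(d+2)p/2}(I_j)}$ is then used to absorb the linear-in-$w$ term into the left-hand side of the bootstrap, closing the estimate on each $I_j$. Once this nonlinear inequality is in place, the partition--bootstrap--summation scheme above is routine.
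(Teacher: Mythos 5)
Your overall strategy---partition $I$ into finitely many subintervals where $\tilde u$ is small in a critical scattering norm, establish a short-time perturbation lemma, then iterate---is the same skeleton the paper uses. Where the two diverge is precisely at the point you yourself flag as the ``principal technical obstacle'': how to control $\||\nabla|^{s_c}[F(\tilde u+w)-F(\tilde u)]\|_{\rm N}$ when $p=\tfrac{4}{d-2s_c}$ is subquadratic. You propose to attack this directly with Christ--Weinstein fractional chain rule plus paraproduct/Kato--Ponce decompositions, but you do not specify the decomposition, and it is exactly here that the subquadratic regularity of $z\mapsto|z|^p z$ makes a direct argument delicate. The paper sidesteps this by a two-tier scheme. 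It first introduces the auxiliary Strichartz-type spaces $X(I),Y(I)$ (and their derivative-loaded versions $X'(I),Y'(I)$), chosen so that the pure nonlinear estimate $\|F(u)\|_{Y(I)}\lesssim\|u\|_{X(I)}^{1+p}$ and the Duhamel estimate $\|\int_0^t e^{i(t-s)\Delta}F\,ds\|_{X(I)}\lesssim\|F\|_{Y(I)}$ hold with \emph{no fractional derivatives at all}; this lets one close a bootstrap for $\|w\|_{X(I)}\lesssim\varepsilon$ on each subinterval without touching the H\"older-continuity issue. Only afterwards is $\||\nabla|^{s_c}w\|_{\rm S}$ recovered, using the difference estimate (the paper's Lemma \ref{dedi})
\[
\||\nabla|^{s}[F(u+v)-F(u)]\|_{L^r_x}\lesssim\||\nabla|^{s}u\|_{L^{r_1}_x}\|v\|_{L^{r_2}_x}^p+\||\nabla|^{s}v\|_{L^{r_1}_x}\|u+v\|_{L^{r_2}_x}^p,
\]
which is structured so that the derivative falls on the linear (Lipschitz) factor and the subquadratic powers $|\cdot|^p$ appear only in plain Lebesgue norms, already controlled via the $X(I)$ step. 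This is what buys the paper its comparatively clean bootstrap; your paraproduct route, if carried out, would have to reproduce the same redistribution of derivatives.

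Two smaller discrepancies. First, the paper measures smallness of $\tilde u$ on subintervals in $X(I)$ (having first converted the hypothesis $\|\tilde u\|_{L_{t,x}^{(d+2)p/2}}\leq L$ into $\|\tilde u\|_{X(I)}\lesssim C(E,L)$ via the persistence-of-regularity lemma), not directly in $L_{t,x}^{(d+2)p/2}$; these are interchangeable but the $X(I)$ choice is what makes the derivative-free bootstrap close. Second, your attribution of the loss $\varepsilon^c$ with $c<1$ to the self-interaction term $|w|^{p+1}$ is not quite how it arises in the paper: there the $\varepsilon^c$ comes from the cross-term $\|w\|_{X(I)}^p\,\||\nabla|^{s_c}\tilde u\|_{\rm S}\lesssim E\,\varepsilon^p$ in the difference estimate (so $c=p=\tfrac{4}{d-2s_c}<1$ for $d\geq5$); the genuinely quadratic-or-worse self-interaction contributes $\varepsilon^{p+1}$ and is harmless.
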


We present the details of the proof of Lemma \ref{purtabation} in the Appendix.\\

Now we can sketch the proof of Theorem \ref{maint1}.
\subsection{Reduction to a critical solution}
To prove Theorem \ref{maint1}, we argue by contradiction. Due to Theorem \ref{wellposed}, we know small initial data implies the theory of global existence and scattering. If Theorem \ref{maint1} fails, there  exists a counterexample acting as a threshold. As a consequence of its criticality,  such counterexample must concentrate in frequency and physical space at the same time. Further analysis shows that such special solution  possesses  a wealth of weird properties that a solution should not have in general. Finally, we will show that such properties are inconsistent with the structure of the equation \eqref{sch0}.

\begin{definition}
For $A>0$, we define $\mathcal{B}(A)$ as follows
\begin{align*}
\mathcal{B}(A)=\{u_0\in \dot{H}^{s_c}_x, \text{radial}:&u: I \times \R^d \text{ is a maximal-lifespan solution to \eqref{sch0} with  }\\&
u(0) = u_0 \in \dot{H}_x^{s_c} , \text{then}\,  \sup_{t\in I}\|u\|_{\dot{H}^{s_c}_x}\leq A \}.
\end{align*}
  \end{definition}
 \begin{definition}\label{df1}
   We say $\mathcal{SC}(A)$ holds if for each $u_0 \in \mathcal{B}(A)$,  then $I=\R$ and   $S_{I}(u)<\infty$. Similarly, we say $\mathcal{SC}(A, u_0)$ holds if $u_0 \in \mathcal{B}(A)$,  then $I=\R$ and   $S_{I}(u)<\infty$.
 \end{definition}

  In view of \eqref{df1}, to prove Theorem\ref{maint1},  it suffices to show that $\mathcal{SC}(A)$ holds for each $A>0$. Note that Theorem \ref{wellposed} implies $\mathcal{SC}(A)$ holds whenever  $A$ is sufficiently small.  Consequently, if Proposition \ref{maint1} fails, there exists a critical value $A_c$ such that  $\mathcal{SC}(A)$ holds when  $A<A_c$  but fails when $A> A_c$.  
 In particular, using concentration-compactness method, we can obtain the following key proposition.

\begin{proposition}\label{reduc}
  Let $d \geq 4,  0<s_c<\f{1}{2}$, if Proposition \ref{maint1} fails,  there  exists a critical value $A_c$ and a critical element $u_{0,c} \in \mathcal{B}(A_c)$ such that $\mathcal{SC}(A_c, u_{0,c})$  fails.
    Correspondingly, we call $u_{c}:I \times \R^d$  the critical maximal-lifespan  solution to \eqref{sch0} with $u_{c}(0)=u_{0,c}$.
\end{proposition}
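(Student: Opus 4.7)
The proof is a standard application of the Keraani--Kenig--Merle concentration-compactness scheme, adapted to the radial, intercritical $\dot H^{s_c}_x$ setting. Assume for contradiction that the proposition fails. By the definition of $A_c$ given in the discussion preceding the statement, one may select a sequence of radial initial data $u_{0,n}\in \dot H^{s_c}_x$ with $\|u_{0,n}\|_{\dot H^{s_c}}\to A_c$, such that the associated maximal-lifespan solutions $u_n:I_n\times\R^d\to \mathbb{C}$ satisfy $u_n\in L_t^\infty\dot H^{s_c}_x(I_n\times\R^d)$ and $S_{I_n}(u_n)\to\infty$.

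The first major step is a radial linear profile decomposition in $\dot H^{s_c}_x$ for $\{u_{0,n}\}$. After passing to a subsequence, one expects to write
\begin{equation*}
  u_{0,n}(x)=\sum_{j=1}^{J}(\lambda_n^j)^{-\f{2}{p}}\bigl(e^{-it_n^j\Delta}\phi^j\bigr)\!\bigl((\lambda_n^j)^{-1}x\bigr)+w_n^J(x),
\end{equation*}
where the $\phi^j\in \dot H^{s_c}_x$ are fixed nonzero radial profiles, the parameters $\{(\lambda_n^j,t_n^j)\}_j$ are pairwise asymptotically orthogonal, the remainder satisfies
\begin{equation*}
  \limsup_{n\to\infty}\|e^{it\Delta}w_n^J\|_{L_{t,x}^{\f{d+2}{2}p}(\R\times\R^d)}\xrightarrow[J\to\infty]{}0,
\end{equation*}
and one has the Pythagorean decoupling
\begin{equation*}
  \|u_{0,n}\|_{\dot H^{s_c}}^{2}=\sum_{j=1}^{J}\|\phi^j\|_{\dot H^{s_c}}^{2}+\|w_n^J\|_{\dot H^{s_c}}^{2}+o_n(1).
\end{equation*}
Spatial translations are absent thanks to the radiality of $u_{0,n}$. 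Together with $\|u_{0,n}\|_{\dot H^{s_c}}\to A_c$, this forces $\|\phi^j\|_{\dot H^{s_c}}\le A_c$ for every $j$.

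Next, to each $\phi^j$ I attach a nonlinear profile $v^j$ solving \eqref{sch0}, using Theorem \ref{wellposed}(1) when $t_n^j\equiv 0$ and Theorem \ref{wellposed}(3) (wave operators) when $t_n^j\to\pm\infty$. Suppose, for contradiction, that $\|\phi^j\|_{\dot H^{s_c}}<A_c$ for \emph{every} $j$. Then, by Definition \ref{df1} and the definition of $A_c$, each $v^j$ is global and scatters with $S_\R(v^j)\le C(\|\phi^j\|_{\dot H^{s_c}})$. Summing appropriately rescaled copies of $v^j$ together with the linear evolution of $w_n^J$ yields an approximate solution $\tilde u_n$ to \eqref{sch0}. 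Using asymptotic orthogonality of the parameters, the vanishing Strichartz norm of $w_n^J$, and decoupling estimates for the nonlinearity, one checks that $\||\nabla|^{s_c}[(i\partial_t+\Delta)\tilde u_n-|\tilde u_n|^p\tilde u_n]\|_{\text{N}(I_n)}\to 0$ and $\|\tilde u_n(0)-u_{0,n}\|_{\dot H^{s_c}}\to 0$. Lemma \ref{purtabation} then promotes $\tilde u_n$ into a genuine solution of \eqref{sch0} with initial data $u_{0,n}$, satisfying $\limsup_n S_{I_n}(u_n)<\infty$, which contradicts $S_{I_n}(u_n)\to\infty$.

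Hence exactly one profile, say $\phi^1$, is nontrivial and it saturates $\|\phi^1\|_{\dot H^{s_c}}=A_c$, while all other profiles and $w_n^J$ vanish in $\dot H^{s_c}_x$ as $n\to\infty$. The critical element $u_{0,c}$ is then constructed from $\phi^1$, taking the limit of the time translation $t_n^1$ if it converges, or invoking the wave operators of Theorem \ref{wellposed}(3) otherwise. By construction $u_{0,c}$ is radial, lies in $\mathcal{B}(A_c)$, and the associated maximal-lifespan solution $u_c$ fails $\mathcal{SC}(A_c,u_{0,c})$, as required. The main technical obstacle is the linear profile decomposition itself: one needs an inverse Strichartz inequality at the fractional regularity $\dot H^{s_c}_x$ for radial data in dimensions $d\ge 4$ with $0<s_c<\f12$, which is precisely the analytic bottleneck the paper's weighted Strichartz framework is designed to overcome; once it is in hand, the remaining steps follow the pattern laid out in \cite{kenigmerler2,M2,M3}.
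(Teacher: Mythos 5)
Your outline reproduces the standard Kenig--Merle concentration-compactness reduction, which is exactly what the paper intends here: it declines to write out the details, instead pointing to \cite{KenigMerle1,kv2010,Homer,M1,M2,M3}, so your sketch (radial $\dot H^{s_c}$-profile decomposition, nonlinear profiles via Theorem \ref{wellposed}, decoupling plus Lemma \ref{purtabation} to exclude the multi-profile scenario) is the correct and intended argument.

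One correction to your closing remark, however: the radial inverse-Strichartz/profile decomposition at regularity $\dot H^{s_c}_x$ in dimensions $d\ge 4$ with $0<s_c<\f12$ is \emph{not} the bottleneck this paper's weighted Strichartz framework is designed to resolve. That profile decomposition is already available in the cited literature (notably \cite{M1,M2}) and requires nothing beyond the ordinary Strichartz and refined Strichartz machinery at fractional regularity. The weighted Strichartz space $\mathcal{S},\mathcal{N}$ is introduced later, in Sections \ref{further}--\ref{rule out}, specifically to replace the long-time Strichartz estimate of \cite{M3} in proving the frequency-localized Morawetz bound and the lower bound on $N(t)$ -- the step that breaks down in high dimensions because the nonlinearity is subquadratic ($p<1$ when $d\ge 5$). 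So Proposition \ref{reduc} itself is standard and uses none of the weighted apparatus; the genuinely new analysis begins only after the critical element has been extracted.
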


 The derivation of Theorem \ref{reduc} by now is standard. One can refer to \cite{KenigMerle1,kv2010,Homer,M1,M2,M3} for more details.

  The critical solution  $u_c$ in Proposition \ref{reduc} enjoys plenty of additional properties, especially among which is its compactness (modulo scaling), see \cite{KenigMerle1,M1}. For brevity, in what follows we abbreviate the critical solution $u_c$ as $u$.
\begin{proposition}
Let $u: I\times \R^d $ be the critical spherically symmetric maximal-lifespan solution to \eqref{sch0}, for each $\eta>0$, there exists functions ~$N:I\rightarrow\mathbb{R}^{+},C:\mathbb{R}^+\rightarrow \mathbb{R}^+$~such that
          \begin{align}\label{almost1}
            \int_{|x|\geq\frac{C(\eta)}{N(t)}}||\nabla|^{s_c}u(t,x)|^2dx+\int_{|\xi|\geq C(\eta)N(t)}|\xi|^{2s_c}|\hat{u}(t,\xi)|^2d\xi<\eta,
          \end{align}
          for all ~$t \in I$.
          We call ~$N(t)$~ the frequency scale function, and ~$C(\eta)$~the compactness modulus function.
        \end{proposition}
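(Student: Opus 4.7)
The plan is to establish the almost-periodicity (modulo scaling) of the critical solution $u$ by the now-standard concentration-compactness route, using the criticality/minimality of $u$ from Proposition \ref{reduc} together with the perturbation theory of Lemma \ref{purtabation}. Since $u$ is radial we only need to quotient by the dilation group, which simplifies the argument: no spatial or Galilean translations appear in the symmetry orbit.

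The scheme is to argue by contradiction and show that the normalized orbit
\[
\mathcal{K}:=\Bigl\{\lambda^{-\frac{2}{p}}\,u\!\left(t,\lambda^{-1}x\right)\,:\,t\in I,\ \lambda>0\Bigr\}\subset \dot H^{s_c}_x(\R^d)
\]
is precompact in $\dot H^{s_c}_x$. If not, one can extract a sequence of times $t_n\in I$ such that, for every choice of scales $\lambda_n>0$, the functions $\lambda_n^{-2/p}u(t_n,\lambda_n^{-1}\cdot)$ fail to have a strongly convergent subsequence in $\dot H^{s_c}_x$. First I would apply the radial linear profile decomposition in $\dot H^{s_c}_x(\R^d)$ (the radial variant of the Keraani-type decomposition, valid in the intercritical range because the needed refined Strichartz/Sobolev embeddings hold for radial functions with the appropriate weighted gain) to the bounded sequence $\{u(t_n)\}$. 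This yields profiles $\phi^j$, scales $\lambda_n^j$, time parameters $t_n^j$, and a remainder $w_n^J$ with asymptotically vanishing scattering size, the orthogonality of the $\dot H^{s_c}_x$ norms, and asymptotic orthogonality of the free Schr\"odinger evolutions in $L^{\frac{d+2}{2}p}_{t,x}$.

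Next I would invoke the nonlinear profile decomposition: each profile $\phi^j$ defines a nonlinear solution $v^j$ to \eqref{sch0}. If more than one profile were nontrivial, the Pythagorean decoupling in $\dot H^{s_c}_x$ would force every profile to have critical norm strictly below $A_c$, so by the definition of $A_c$ each $v^j$ would be global with finite scattering size and obey uniform global Strichartz bounds. Concatenating these nonlinear profiles produces an approximate solution whose error is small in the $\dot H^{s_c}_x$-critical sense, and Lemma \ref{purtabation} would then upgrade this to a genuine global solution for $u_c$ with finite $S_\R(u)$, contradicting the failure of $\mathcal{SC}(A_c,u_{0,c})$. The only surviving possibility is a single nontrivial profile with vanishing remainder, which gives strong convergence of $\{u(t_n)\}$ in $\dot H^{s_c}_x$ after rescaling. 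Hence $\mathcal K$ is precompact.

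Once precompactness modulo scaling is established, the functions $N(t)$ and $C(\eta)$ are extracted mechanically: for each $t\in I$, choose $N(t)$ so that, after rescaling the orbit point by $N(t)$, its $\dot H^{s_c}_x$-mass is, say, essentially concentrated at unit frequency and unit spatial scale (one may take $N(t)$ to be the reciprocal of a frequency median of $|\nabla|^{s_c}u(t)$, which is a continuous selector on $I$ by local well-posedness). Then precompactness of $\mathcal K$ together with the Riesz-Kolmogorov characterization of compactness in $\dot H^{s_c}_x$ via simultaneous equicontinuity in physical and frequency space gives, for each $\eta>0$, a $C(\eta)$ such that \eqref{almost1} holds uniformly in $t\in I$. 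The main obstacle I expect is the profile decomposition step at the regularity $0<s_c<\tfrac12$ in high dimensions: one must justify the refined radial Strichartz/Sobolev inequality with the correct weighted gain and then the asymptotic orthogonality of the nonlinear profiles' space-time norms, which is where the weighted Strichartz spaces promised in the introduction become essential and where most of the technical care is required.
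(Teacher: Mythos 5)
Your outline is the standard Keraani–Kenig–Merle concentration-compactness scheme, and it is exactly what the paper itself relies on: the paper does not prove this Proposition but defers it, together with the construction of the critical element in Proposition \ref{reduc}, to the references [KenigMerle1, kv2010, Homer, M1, M2, M3]. Your steps — contradiction, radial $\dot H^{s_c}_x$ profile decomposition, Pythagorean decoupling to force a single profile via the minimality of $A_c$ and the stability Lemma \ref{purtabation}, then extraction of $N(t)$ and $C(\eta)$ from precompactness — match the cited arguments, and you correctly single out the delicate point (refined radial Strichartz/Sobolev at regularity $0<s_c<\tfrac12$ in high dimensions) on which the profile decomposition depends.
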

\begin{remark}
  \begin{enumerate}
    \item This  definition is adapted to the radial setting. In the  general case, one should also take into account the translation. If we  consider mass-critical case, one more parameter should be added in \eqref{almost1} due to Galilean invariance of \eqref{sch0}.

    \item By the Arzel\`{a}-Ascoli theorem, \eqref{almost1} can be rephrased as
    \begin{align}
      \{u(t):t\in I\}\subset \{\lambda^{\f{d-2s_c}{2}}f(\lambda x): \lambda \in (0,\infty) ~\text{and} ~ f\in K\}
    \end{align}
    where $K $ is a precompact set in $\dot{H}^{s_c}.$  By $\dot{H}^{s_c}\hookrightarrow L^\f{2d}{d-2s_c}_x,$  we know that $u$ is also compact (modulo scaling) in $L_x^{\f{2d}{d-2s_c}},$  that is
    \begin{align}\label{almost2}
      \int_{|x|\geq\frac{C(\eta)}{N(t)}}|u(t,x)|^{\f{2d}{d-2s_c}}dx\leq \eta.
    \end{align}
   \item  We claim that there is a constant $c>0$ such that
    \begin{align}\label{polow}
    \inf_{t \in I}\|u(t)\|_{L_x^{\f{2d}{d-2s_c}}}\geq c.
    \end{align}
     Otherwise,  as $L^{\f{2d}{d-2s_c}}$ norm is left invariant under scaling \eqref{scale},  there exists a sequence $\{N(t_n)^{\f{-d+2s_c}{2}}u\left(t_n,\f{x}{N(t_n)}\right):t_n\in I\}$ such that
     \begin{align}
     N(t_n)^{\f{-d+2s_c}{2}}u\left(t_n,\f{x}{N(t_n)}\right) \rightarrow 0 \quad \text{in}\quad  L^{\f{2d}{d-2s_c}}_x.
     \end{align}
      On the other hand, since $N(t_n)^{\f{-d+2s_c}{2}}u\left(t_n,\f{x}{N(t_n)}\right)$ is also compact in $\dot{H}^{s_c}_x,$  we have
      \begin{align}
      N(t_n)^{\f{-d+2s_c}{2}}u\left(t_n,\f{x}{N(t_n)}\right) \rightarrow 0 \quad \text{in} \quad \dot{H}^{s_c}_x,
      \end{align}
       which contradicts the fact that $u$ blows up.
  \end{enumerate}
  We emphasize that \eqref{polow} has its analogue  in section 6 of \cite{Colliander} which says the potential part  must have lower bound.
  Further, from the compactness property, we may choose $c(\eta)$ sufficiently small such that
  \begin{align}\label{almost3}
    \int_{|x|\leq\frac{c(\eta)}{N(t)}}||\nabla|^{s_c}u(t,x)|^2dx+\int_{|\xi|\leq c(\eta)N(t)}|\xi|^{2s_c}|\hat{u}(t,\xi)|^2d\xi<\eta.
  \end{align}
  \end{remark}

  Next we will record more properties of the critical solution  which will be used in what follows.

\begin{lemma}[Local Constancy\cite{KTM}] \label{locon}
If ~$u:I\times \mathbb{R}^d \rightarrow \mathbb{C}$~is the critical maximal-lifespan  solution to \eqref{sch0}, then there exists ~$\delta=\delta(u)>0$~ so that for all ~$t_0\in I$~
\begin{align}
  [t_0-\delta N(t_0)^{-2},t_0+\delta N(t_0)^{-2}]\subset I.
\end{align}
Moreover,~$N(t)\sim_{u} N(t_0) ~\textrm{for }~|t-t_0|\leq\delta N(t_0)^{-2}.$~
\end{lemma}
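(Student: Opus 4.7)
The plan is to reduce the statement to a compactness argument by rescaling around the reference time $t_0$ and then invoking Theorem \ref{wellposed} together with the stability result of Lemma \ref{purtabation}. For any $t_0 \in I$, define the rescaled solution
\[ v^{[t_0]}(t,x) := N(t_0)^{-\f{d-2s_c}{2}} u\bigl(t_0 + t N(t_0)^{-2}, x/N(t_0)\bigr), \]
which is again a solution to \eqref{sch0} by the scaling invariance \eqref{scale}. The compactness property \eqref{almost1} guarantees that $\{v^{[t_0]}(0)\}_{t_0\in I}$ sits in a precompact subset $K\subset \dot{H}^{s_c}_x$, and the lower bound \eqref{polow} ensures every element of $K$ has $L^{2d/(d-2s_c)}_x$ norm bounded below by the universal constant $c>0$. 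In particular the closure of $K$ contains no zero profile.

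For the first assertion, fix any $\phi$ in the closure of $K$. Theorem \ref{wellposed} yields a maximal solution $w_\phi$ with $w_\phi(0)=\phi$ defined on some open neighborhood of $0$, with scattering size $L_\phi<\infty$ on a small symmetric interval $[-2\delta_\phi, 2\delta_\phi]$. Applying Lemma \ref{purtabation} with $\tilde u = w_\phi$, $e\equiv 0$, and $E$ the (finite) diameter of the closure of $K$ in $\dot{H}^{s_c}_x$, we find a threshold $\varepsilon_1(\phi)>0$ so that every initial datum within $\varepsilon_1(\phi)$ of $\phi$ launches a solution on $[-\delta_\phi,\delta_\phi]$ with uniformly bounded $L^{(d+2)p/2}_{t,x}$ norm. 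Covering the closure of $K$ by finitely many such balls and taking the minimum of the associated $\delta_\phi$'s produces a single $\delta=\delta(u)>0$ such that \emph{every} $v^{[t_0]}$ exists on $[-\delta,\delta]$. Undoing the rescaling yields the inclusion $[t_0-\delta N(t_0)^{-2}, t_0+\delta N(t_0)^{-2}]\subset I$.

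For the comparability $N(t)\sim_u N(t_0)$, continuity of the Duhamel map on the controlled interval $[-\delta,\delta]$ shows that the orbit $\{v^{[t_0]}(s): s\in[-\delta,\delta],\, t_0\in I\}$ is itself precompact in $\dot{H}^{s_c}_x$, with $\dot{H}^{s_c}_x$-norm uniformly bounded below thanks to \eqref{polow} applied to $u$ at the corresponding times. Combining \eqref{almost1} with \eqref{almost3}, the frequency scale of any profile in this precompact family is pinned to $\sim 1$: otherwise one could extract a subsequence losing all of its $\dot{H}^{s_c}_x$-mass either below $c(\eta)$ or above $C(\eta)$, contradicting precompactness together with the uniform lower bound. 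Since by construction the frequency scale of $u(t_0+sN(t_0)^{-2})$ equals $N(t_0)$ times the frequency scale of $v^{[t_0]}(s)$, we conclude $N(t)\sim_u N(t_0)$ for all $|t-t_0|\leq \delta N(t_0)^{-2}$.

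The main obstacle is the \emph{uniformity} of the time of existence $\delta$ across the infinite family of rescaled initial data. A naive pointwise application of local well-posedness would give $\delta_\phi$ depending on $\phi$; the key input that upgrades this to a uniform $\delta$ is the precompactness of $K$ together with the stability lemma, which lets one promote well-posedness for an individual profile to well-posedness for a whole neighborhood. No cancellation structure of the nonlinearity is needed beyond what is already built into the hypothesis that $u$ is a solution in the Strichartz sense.
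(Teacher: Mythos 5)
The paper does not prove Lemma \ref{locon} at all; it imports it from \cite{KTM}, as the citation in the lemma title indicates. Your proof reconstructs the standard argument from that reference (see also Killip--Visan's Clay lecture notes), and it is essentially correct: rescale around $t_0$ so that $N(t_0)=1$, observe that \eqref{almost1} places the normalized data $v^{[t_0]}(0)$ in a fixed precompact set $K\subset\dot H^{s_c}_x$, cover $\overline K$ by finitely many stability neighborhoods supplied by Lemma \ref{purtabation} to extract a uniform local existence time $\delta$, and then use precompactness of the orbit over $[-\delta,\delta]$ together with the nonvanishing condition \eqref{polow} to pin the frequency scale $N(t)/N(t_0)$ to $\sim 1$.

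Two small points you leave implicit but which are fine. First, passing from the $L^{2d/(d-2s_c)}_x$ lower bound \eqref{polow} to a lower bound on $\|v^{[t_0]}(s)\|_{\dot H^{s_c}_x}$ uses the Sobolev embedding $\dot H^{s_c}_x\hookrightarrow L^{2d/(d-2s_c)}_x$ (so the $\dot H^{s_c}$ norm dominates the $L^{2d/(d-2s_c)}$ norm, giving a lower bound, not the other way around). Second, in the concluding contradiction the quantifiers must be handled in the right order: fix $\eta$, let the ratio $N(t_n)/N(t_{0,n})$ degenerate, and only then let the frequency cutoff $R$ run over all positive values to conclude the limit profile vanishes; your phrase ``losing all of its $\dot H^{s_c}_x$-mass'' is a correct shorthand for this but it is worth spelling out when writing a self-contained proof. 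With those caveats noted, your argument is the same one the cited reference uses, and nothing essential is missing.
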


Due to Lemma \ref{locon}, we can subdivide the lifespan interval $I$ into several characteristic subintervals $J_k$ such that
\begin{align}\label{Isub}
I=\cup_{k}J_k,\quad N(t)\sim N_k ~\text{when} ~ t \in J_k   ~\text{with}~  |J_k|\sim N_k^{-2}.
\end{align}
 The following result can be directly derived from Lemma \ref{locon}.
\begin{corollary}\label{fscale}
Let ~$u:I\times \mathbb{R}^d\rightarrow \mathbb{C}$~be the critical maximal-lifespan solution to \eqref{sch0}. If ~$T$~ is a finite endpoint of~$ I$, then ~$N(t)\gtrsim_{u} |T-t|^{-1/2}$~. In particular, $\lim_{t\rightarrow T}N(t)=\infty.$
\end{corollary}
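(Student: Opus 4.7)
The plan is to derive the lower bound directly from Lemma \ref{locon} by a contradiction argument using the fact that the interval $I$ cannot extend past $T$. Without loss of generality assume $T = \sup I < \infty$ (the case $T = \inf I$ is symmetric). For any $t \in I$, Lemma \ref{locon} furnishes a constant $\delta = \delta(u) > 0$ such that
\begin{equation*}
[t - \delta N(t)^{-2},\, t + \delta N(t)^{-2}] \subset I.
\end{equation*}
Since the right endpoint of this interval must lie in $I$, and $\sup I = T$, we obtain
\begin{equation*}
t + \delta N(t)^{-2} \leq T,
\end{equation*}
which rearranges to $N(t)^{2} \geq \delta/(T-t)$, i.e., $N(t) \gtrsim_u (T-t)^{-1/2}$.

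Letting $t \to T^-$ forces $N(t) \to \infty$, which yields the second assertion. I expect no real obstacle: the entire content is the elementary observation that the interval of guaranteed existence around $t$, which has length comparable to $N(t)^{-2}$, cannot protrude beyond the finite endpoint $T$, so $N(t)^{-2}$ must shrink at least linearly as $t \to T$. The only thing worth being careful about is that the implicit constant in $\gtrsim_u$ depends on $\delta(u)$ coming from Lemma \ref{locon}, which itself depends on $u$ — this dependence is acknowledged by the subscript in the statement.
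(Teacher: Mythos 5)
Your argument is precisely the ``direct derivation from Lemma \ref{locon}'' that the paper alludes to without writing out: the local-constancy interval $[t-\delta N(t)^{-2}, t+\delta N(t)^{-2}]$ must fit inside $I$, so its endpoint cannot exceed $T$, giving $N(t)^{-2}\lesssim_u T-t$. The proof is correct and matches the paper's intended route.
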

Finally we relate the frequency function $N(t)$ to spacetime norm by the following lemma.
\begin{lemma}[Spacetime Bound \cite{KTM}]\label{lowupper}
   Let ~$u:I\times \mathbb{R}^d\rightarrow \mathbb{C}$~ be the  critical maximal-lifespan solution to \eqref{sch0},  for each interval $J\subset I$,  we have
  \begin{align}
    \int_{J}N(t)^2dt\lesssim_{u}\||\nabla|^{s_c}u\|_{L_t^2L_x^{\f{2d}{d-2}}(J\times \mathbb{R}^d)}^2\lesssim_{u}1+\int_JN(t)^2dt.
  \end{align}
\end{lemma}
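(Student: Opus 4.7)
The plan is to decompose $J$ into the characteristic subintervals of Lemma~\ref{locon} and prove both bounds per-subinterval, then sum. By \eqref{Isub}, write $J=\bigcup_k J_k$ with $N(t)\sim_u N_k$ and $|J_k|\sim_u N_k^{-2}$, so $\int_J N(t)^2\,dt\sim_u\#\{k\}$, and the lemma reduces to the uniform two-sided estimate
\[
\||\nabla|^{s_c}u\|_{L^2_tL^{2d/(d-2)}_x(J_k\times\mathbb{R}^d)}^{\,2}\sim_u 1
\]
on each full characteristic subinterval~$J_k$. The additive~$1$ in the claimed upper inequality absorbs the contribution of the at-most-two partial subintervals at the endpoints of~$J$.

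For the upper bound on a full $J_k$, rescale $u_k(\tau,y):=N_k^{-(d-2s_c)/2}\,u(t_k+N_k^{-2}\tau,\,N_k^{-1}y)$ about a reference time $t_k\in J_k$. Then $u_k$ again solves \eqref{sch0}, its time interval has length $\sim 1$, and its frequency scale function is $\sim 1$; by the almost-periodicity \eqref{almost1}, the initial datum $u_k(0)$ lies in a fixed precompact set $K\subset\dot{H}^{s_c}_x$. Theorem~\ref{wellposed} together with Lemma~\ref{purtabation} applied uniformly over~$K$ then yields $\||\nabla|^{s_c}u_k\|_{L^2_\tau L^{2d/(d-2)}_y}\lesssim_u 1$, and undoing the scaling (which preserves this Strichartz norm) followed by summation over~$k$ gives the right-hand inequality.

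For the lower bound I argue by contradiction: if no uniform positive lower bound held on full~$J_k$, some sequence would satisfy $\||\nabla|^{s_c}u_{k_n}\|_{L^2_\tau L^{2d/(d-2)}_y}\to 0$. Compactness of~$K$ extracts, along a subsequence, $u_{k_n}(0)\to v_\infty$ in $\dot{H}^{s_c}_x$; Lemma~\ref{purtabation} with $e\equiv 0$ then forces $u_{k_n}\to v$ in the same Strichartz norm on a fixed unit interval, where $v$ is the maximal-lifespan solution produced by Theorem~\ref{wellposed} with $v(0)=v_\infty$. Hence $|\nabla|^{s_c}v\equiv 0$, so $v\equiv 0$, contradicting the $L^{2d/(d-2s_c)}$ lower bound $\|v_\infty\|\geq c>0$ inherited from \eqref{polow}. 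The step I expect to be most delicate is the per-$t$ density estimate $\||\nabla|^{s_c}u(t)\|_{L^{2d/(d-2)}_x}\gtrsim_u N(t)$ needed to handle arbitrary $J\subset I$ (in particular short $J$ strictly contained in a single~$J_k$): because $f\mapsto\||\nabla|^{s_c}f\|_{L^{2d/(d-2)}}$ is not continuous on~$\dot{H}^{s_c}_x$, one combines the compactness modulus function with Bernstein inequalities to pass the $L^{2d/(d-2s_c)}$ lower bound \eqref{polow} through the frequency-localized main part of the rescaled solution, absorbing the tails by choice of the small parameter.
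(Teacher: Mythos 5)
The paper does not prove this lemma: it is imported verbatim from \cite{KTM} (Killip--Tao--Visan), so there is no ``paper's own proof'' to compare against. Your reconstruction follows the standard route from the almost-periodicity literature, and the upper-bound half is essentially the textbook argument: decompose into characteristic subintervals via Lemma~\ref{locon}, rescale to unit scale, use precompactness of the orbit together with local well-posedness and stability (Lemma~\ref{purtabation}) to get a uniform Strichartz bound per subinterval, and let the additive $1$ absorb the at most two boundary fragments. That part is sound.

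The lower-bound half, however, contains a genuine gap and some structural redundancy. The gap is exactly the step you flag: the pointwise-in-time estimate $\||\nabla|^{s_c}u(t)\|_{L_x^{2d/(d-2)}}\gtrsim_u N(t)$ is never actually established, only gestured at. Since $f\mapsto\||\nabla|^{s_c}f\|_{L^{2d/(d-2)}}$ is not controlled by the $\dot H^{s_c}$ norm, one cannot pass the lower bound \eqref{polow} through compactness directly. The correct mechanism is to fix a medium-frequency projection $P_{mid}:=P_{c(\eta)\le\cdot\le C(\eta)}$ (after rescaling $N(t)=1$): by \eqref{almost1}, \eqref{almost3} and \eqref{polow}, $\||\nabla|^{s_c}P_{mid}u(t)\|_{L^2}\gtrsim 1$ once $\eta$ is small; and on the fixed annulus of frequencies the Bernstein inequality makes $\||\nabla|^{s_c}P_{mid}\cdot\|_{L^{2d/(d-2)}}$ comparable to $\||\nabla|^{s_c}P_{mid}\cdot\|_{L^2}$, hence continuous on $\dot H^{s_c}$ and bounded below on the compact set $K$. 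Since $P_{mid}$ is an $L^{2d/(d-2)}$ Fourier multiplier, this gives the per-$t$ bound. Once you have this pointwise estimate, integrating it over $J$ yields the lower inequality for \emph{every} subinterval $J\subset I$ immediately; the per-$J_k$ contradiction argument you propose (via compactness plus Lemma~\ref{purtabation}) then becomes superfluous, and in fact does not by itself handle $J$ strictly inside one $J_k$, which is the only reason you end up needing the per-$t$ bound anyway. As a secondary technical point: in your contradiction argument you apply Lemma~\ref{purtabation} with reference solution $v$ having data $v_\infty$, but the lemma requires an a priori bound on $\|v\|_{L_{t,x}^{(d+2)p/2}}$ on the unit interval; this should be arranged by using the rescaled $u_{k_n}$ themselves as the reference solutions, whose local scattering norms are controlled by Lemma~\ref{locon} and local theory, before passing to the limit.
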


\begin{remark}
  Owing to \eqref{Isub}, $\int_{I} N(t)^2 dt$ can be rewritten as follows:
\begin{align*}
  \int_{I}N(t)^2 dt=\sum_{k}N_k^2|J_k|\sim \#\{J_k\}
\end{align*}
the above formula indicates that the integral of $\int_{I}N(t)^2 dt$ equals to counting the number of the subintervals $J_k \subset I$.
\end{remark}

By rescaling argument,  we can also ensure
  \begin{align}\label{lowfre1}
  N(t)\leq 1
  \end{align}
  at least on the interval $J$ which is one direction of maximal lifespan  of $u$, say $[0,\sup(I))$. For the sake of exposition, we may harmlessly identify $J$ as $I$. For further discussion, see\cite{TVZ}.

 To prove Theorem \ref{maint1}, it suffices to show that the critical solution  in Theorem \ref{reduc}  does not exist. To this end, the paper is organized  as follows: In Section \ref{notation} we will present some basic tools.  In Section \ref{further}, we  will introduce the weighted Strichartz norm and the associated Strichartz estimate. In Section \ref{frequency}, we will establish frequency-localized Morawetz estimate, as a result, we will show that the weighted Strichartz norm of high frequency portion of the solution $u$ will stay bounded, the fact which we will apply directly to rule out the critical solution.  In Section \ref{verification}, we will show that the frequency scale function $N(t)$ can't go to zero. Together with \eqref{lowfre1}, ultimately we will preclude the critical solution in  Section \ref{rule out}.

\vskip0.15cm

\textbf{Acknowledgments:} This work was supported in part by the National Natural Science Foundation of China under grant No.11671047.

\section{Notation and some basic tools}\label{notation}

We write $X\lesssim Y$  or $Y\gtrsim  X$ whenever $X\leq CY$  for some constant $C>0$ and use $O(Y)$ to denote any quantity $X$ such that  $|X|\lesssim Y.$
 If $X\lesssim Y$  and $Y \lesssim X$  hold simultaneously, we abbreviate that by $X\sim Y.$  Without special clarification, the implicit constant $C$  can vary from line to line. We use Japanese bracket $\langle x\rangle$ to denote $(1+|x|^2)^{\f{1}{2}}.$  We denote by $X\pm$  quantity of the form $X\pm\varepsilon$ for any $\varepsilon>0.$

For any spacetime slab $I\times \R^d,$  we use $L_t^q L_x^r(I\times \R^d)$  to denote the Banach space of functions $u:I\times \R^d \rightarrow \mathbb{C}$  whose norm is
\begin{align*}
  \|u\|_{L_t^q L_x^r(I\times \R^d)}:=\left(\int_I\|u(t)\|_{L_x^r}^qdt\right)^{\f{1}{q}}<\infty,
\end{align*}
with the appropriate modification for the case $q$ or $r$  equals to infinity. When $q=r,$  for brevity, sometimes we write it as $L_{t,x}^q.$  One more thing to be noticed is that without obscurity we will use $L_t^q L_x^r$  with $L^q L^r$  interchangeably.

We define the Fourier transform on $\R^d$ by
\begin{align*}
  \hat{f}:=(2\pi)^{-\f{d}{2}}\int_{\R^d}e^{-ix\xi}f(x)dx,
\end{align*}
and the homogeneous Sobolev norm as
\begin{align*}
  \|f\|_{\dot{H}^s(\R^d)}:=\||\nabla|^s f\|_{L_x^2(\R^d)},
\end{align*}
where
\begin{align*}
  \widehat{|\nabla|^s f}(\xi):=|\xi|^s \hat{f}(\xi).
\end{align*}
Next we will present the Littlewood-Paley decomposition .

Let $\phi(\xi)$  be a radial bump function supported in the ball $\{\xi\in \R^d :|\xi|\leq \f{11}{10}\}$  and equals to $1$  on the ball $\{\xi\in \R^d: |\xi|\leq 1\}.$  For each number $N>0,$  we define
\begin{align*}
    \widehat{P_{\leq N}f}(\xi):=&\varphi\big(\frac{\xi}{N}\big)\hat{f}(\xi), \\
    \widehat{P_{> N}f}(\xi):=&\big(1-\varphi(\frac{\xi}{N})\big)\hat{f}(\xi), \\
    \widehat{P_{N}f}(\xi):=&\big(\varphi(\frac{\xi}{N})-\varphi(\frac{2\xi}{N})\big)\hat{f}(\xi),
\end{align*}
with similar definitions for $P_{< N}$ and $P_{\geq N}$. Moreover,  we define
\begin{align*}
    P_{M<\cdot\leq N}:=P_{\leq N}-P_{\leq M},
\end{align*}
whenever $M<N$. Also there are the following Bernstein inequalities for the  Littlewood-Paley operators:
\begin{equation*}
   \left\{ \aligned
   & \||\nabla|^s P_{\leq N}f\|_{L^q}\lesssim N^s\|P_{\leq N}f\|_{L^q}\lesssim N^s\|f\|_{L^q}, \\
   & \|P_{>N}f\|_{L^q}\lesssim N^{-s}\||\nabla|^s P_{>N}f\|_{L^q}\lesssim N^{-s}\||\nabla|^s f\|_{L^q}, \\
   & \||\nabla|^{\pm s}P_{N}f\|_{L^q}\lesssim N^{\pm s}\|P_N f\|_{L^q}\lesssim N^{\pm s}\|f\|_{L^q}, \\
   &\|P_{\leq N}f\|_{L^q}\lesssim N^{d(\frac{1}{p}-\frac{1}{q})}\|P_{\leq N}f\|_{L^p}, \\
   &\|P_N f\|_{L^q}\lesssim N^{d(\frac{1}{p}-\frac{1}{q})}\|P_{\leq N}f\|_{L^p}.
   \endaligned
  \right.
\end{equation*}
where $1\leq p\leq q\leq \infty$ .

\begin{lemma}[Fractional product rule \cite{CW}]\label{product}
Let ~$s>0$~ and ~$1<r,r_j,q_j<\infty$~ satisfy ~$\frac{1}{r}=\frac{1}{r_j}+\frac{1}{q_j}$~for ~$j=1,2,$~then
\begin{align}
  \||\nabla|^s(fg)\|_{L^r_x}\lesssim\|f\|_{L_x^{r_1}}\||\nabla|^sg\|_{L^{q_1}_x}+\||\nabla|^sf\|_{L_x^{r_2}}\|g\|_{L^{q_2}_x}.
\end{align}
  \end{lemma}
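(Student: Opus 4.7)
\medskip

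\noindent\textbf{Proof proposal.} My plan is to establish the fractional Leibniz rule via Bony's paraproduct decomposition together with Littlewood--Paley theory. Writing $f=\sum_N P_N f$ and $g=\sum_M P_M g$ with dyadic $N,M$, I would decompose the product as
\[
fg \;=\; \pi_1(f,g)+\pi_2(f,g)+\pi_3(f,g),
\]
where $\pi_1(f,g)=\sum_N P_{\leq N/8}f\cdot P_N g$ is the low--high paraproduct, $\pi_2(f,g)=\sum_N P_N f\cdot P_{\leq N/8}g$ is the high--low paraproduct, and $\pi_3(f,g)=\sum_{N\sim M} P_N f\cdot P_M g$ collects the diagonal (high--high) interactions. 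The key point is that in $\pi_1$ and $\pi_2$, each summand has Fourier support in an annulus of size $\sim N$, so $|\nabla|^s$ acts essentially as multiplication by $N^s$ on that block; whereas in $\pi_3$ the frequency of each summand is only $\lesssim N$, so $|\nabla|^s$ cannot be directly ``distributed'' onto one factor.

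For the paraproduct $\pi_2$, I would estimate $|\nabla|^s\pi_2(f,g)$ via the Littlewood--Paley square function characterization of $L^r$ ($1<r<\infty$), noting that $|\nabla|^s \bigl(P_N f \cdot P_{\leq N/8}g\bigr)$ has frequency $\sim N$ and pointwise norm $\lesssim N^s |P_N f|\cdot M(g)$, where $M$ is the Hardy--Littlewood maximal function. Summing in $N$ through the square function and applying H\"older together with the vector-valued Fefferman--Stein maximal inequality yields
\[
\|\,|\nabla|^s\pi_2(f,g)\|_{L^r_x}\lesssim \bigl\|\bigl(\sum_N N^{2s}|P_N f|^2\bigr)^{1/2}\bigr\|_{L^{r_2}_x}\|g\|_{L^{q_2}_x}\lesssim \||\nabla|^s f\|_{L^{r_2}_x}\|g\|_{L^{q_2}_x}.
\]
The estimate for $\pi_1$ is symmetric and produces the $\|f\|_{L^{r_1}_x}\||\nabla|^s g\|_{L^{q_1}_x}$ term.

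The main obstacle, as expected, will be the diagonal piece $\pi_3$, since here the product $P_N f\cdot P_M g$ with $N\sim M$ may be supported at arbitrarily low frequencies and so the naive ``bring $|\nabla|^s$ onto one factor'' argument fails. To handle it, I would apply a further dyadic decomposition $\pi_3=\sum_{K} P_K\pi_3$ and use the fact that $P_K\bigl(P_N f\cdot P_M g\bigr)$ vanishes unless $K\lesssim N\sim M$. On each such block I bound
\[
\bigl\| P_K\!\bigl(P_N f\cdot P_M g\bigr)\bigr\|_{L^r_x}\lesssim \|P_N f\|_{L^{r_2}_x}\|P_M g\|_{L^{q_2}_x},
\]
insert the factor $(K/N)^{s}\cdot N^s$, and sum the geometric series in $K\lesssim N$ (this is where $s>0$ is used crucially). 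A square function plus Fefferman--Stein argument, parallel to the one for $\pi_2$, then gives the desired bound, splitting the $N^s$ factor onto whichever of the two terms is more convenient, so that the contribution of $\pi_3$ is absorbed into the right-hand side of the claimed inequality. Combining the three pieces completes the proof.
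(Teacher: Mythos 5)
The paper states this lemma purely as a citation to \cite{CW} and gives no proof of its own, so there is no in-text argument to compare against. Your proposal is the standard modern proof of the fractional Leibniz (Kato--Ponce / Christ--Weinstein) inequality via Bony's paraproduct decomposition, the Littlewood--Paley square-function characterization of $L^r$, and the vector-valued Fefferman--Stein maximal inequality, and the outline is correct: the off-diagonal paraproducts $\pi_1,\pi_2$ localize each summand to an annulus of radius $\sim N$, so $|\nabla|^s$ trades for $N^s$ up to a harmless maximal operator, while the diagonal piece $\pi_3$ needs the extra decomposition in the output frequency $K$ together with the geometric summation over $K\lesssim N$, which is exactly where the hypothesis $s>0$ enters. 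One small caveat if you write this out in full: in the $\pi_2$ block the pointwise bound should read $|\,|\nabla|^s(P_Nf\cdot P_{\leq N/8}g)|\lesssim N^s\,M(P_Nf\cdot P_{\leq N/8}g)$ rather than directly $N^s|P_Nf|\,M(g)$, since $|\nabla|^s$ applied to a frequency-localized function is a convolution whose kernel is only $L^1$-normalized at scale $N^{-1}$; you then remove the outer maximal operator via the vector-valued Fefferman--Stein inequality before invoking $|P_{\leq N/8}g|\lesssim M(g)$ and H\"older. With that minor adjustment the argument closes and matches what one finds in the standard references.
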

  We will also need the following  chain rule for fractional order derivatives. One can turn to \cite{CW} for more details.
\begin{lemma}[Fractional chain rule]\label{chain}
Suppose ~$G\in C^1(\mathbb{C})$~and ~$s\in (0,1].$~Let ~$1<r<r_2<\infty $~and ~$1<r_1\leq \infty $~be such that~$\frac{1}{r}=\frac{1}{r_1}+\frac{1}{r_2},$~then
\begin{align}\label{frac}
  \||\nabla|^sG(u)\|_{L^r_x}\lesssim\|G'(u)\|_{L_x^{r_1}}\||\nabla|^su\|_{L_x^{r_2}}.
\end{align}
\end{lemma}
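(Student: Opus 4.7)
The plan is to reduce the estimate to a pointwise bound via the singular integral representation of $|\nabla|^s$. For $s=1$ the statement follows from the classical identity $\nabla G(u)=G'(u)\nabla u$ and H\"older, so all the substance is in the range $s\in(0,1)$. Starting from
\[
|\nabla|^s h(x)=c_{d,s}\,\mathrm{p.v.}\!\int_{\R^d}\frac{h(x)-h(y)}{|x-y|^{d+s}}\,dy
\]
applied to $h=G(u)$, and using the mean value identity
\[
G(u(x))-G(u(y))=(u(x)-u(y))\int_0^1 G'\bigl(\theta u(x)+(1-\theta)u(y)\bigr)\,d\theta,
\]
the goal is to establish the pointwise bound
\[
\bigl||\nabla|^s G(u)(x)\bigr|\lesssim M(G'(u))(x)\cdot M(|\nabla|^s u)(x),
\]
where $M$ is the Hardy-Littlewood maximal operator. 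Once this is in hand, H\"older with exponents $(r_1,r_2)$ and the $L^p$-boundedness of $M$ (which holds for all $p>1$, and trivially for $p=\infty$) yield the lemma:
\[
\||\nabla|^s G(u)\|_{L^r}\lesssim\|M(G'(u))\|_{L^{r_1}}\|M(|\nabla|^s u)\|_{L^{r_2}}\lesssim\|G'(u)\|_{L^{r_1}}\||\nabla|^s u\|_{L^{r_2}}.
\]

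The pointwise bound would be proved by fixing $x$ and splitting the singular integral at a scale $R=R(x)$ to be optimized. On the near region $|x-y|\leq R$ I would control the $\theta$-integral of $|G'(u_\theta(x,y))|$ by $M(G'(u))(x)$, and integrate the remaining kernel $|u(x)-u(y)|/|x-y|^{d+s}$ over this ball to obtain a factor of the order $M(|\nabla|^s u)(x)$. On the far region $|x-y|>R$ I would split $G(u(x))-G(u(y))$ as a difference, bound each piece via $|G(z)|\lesssim|z|\,|G'(\tilde z)|$ for a suitable $\tilde z$ (chosen so that $|G'(\tilde z)|$ is estimable by $M(G'(u))(x)$), and exploit the faster decay of $|x-y|^{-d-s}$; optimizing in $R$ yields the claimed estimate.

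The main obstacle is justifying $\int_0^1|G'(\theta u(x)+(1-\theta)u(y))|\,d\theta\lesssim M(G'(u))(x)$: the arguments of $G'$ are complex numbers on the segment between $u(x)$ and $u(y)$, which typically are not attained as values of $u$ at any point of $\R^d$, so $M(G'(u))$ does not literally majorize them. The usual Christ-Weinstein workaround is either to prove the estimate first for power-type nonlinearities (where $|G'(\theta a+(1-\theta)b)|\lesssim|G'(a)|+|G'(b)|$ is automatic) and then extend to general $G$ by density or approximation, or to recast the whole argument at the level of a Littlewood-Paley decomposition and handle the resulting paraproducts directly. Either way, it is this technical bookkeeping, rather than any single hard inequality, that pushes the full proof out to reference length, even though the guiding picture is quite transparent.
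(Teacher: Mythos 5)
The paper does not prove this lemma; it cites \cite{CW}, and the standard references (Christ--Weinstein, Kenig--Ponce--Vega, Taylor, as well as the treatment in \cite{V2}) prove it via a Littlewood--Paley telescoping decomposition $G(u)=\sum_j\bigl[G(u_{\leq j})-G(u_{\leq j-1})\bigr]$ together with Bernstein and paraproduct-type estimates, not via the singular-integral route you propose. So this is not a comparison of two proofs: your sketch has to stand on its own, and it does not yet.

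The gap is that your central claim, the pointwise bound $\bigl||\nabla|^s G(u)(x)\bigr|\lesssim M(G'(u))(x)\,M(|\nabla|^s u)(x)$, is stated as a goal but never established, and the steps you outline do not obviously lead to it. You correctly flag the problem that $G'\bigl(\theta u(x)+(1-\theta)u(y)\bigr)$ is evaluated at points that need not lie in the range of $u$, so $M(G'(u))$ does not majorize the $\theta$-integral; but there is a second, independent obstruction that you do not address. Because $|\nabla|^s u$ is itself a nonlocal object, the near-region integral $\int_{|x-y|\leq R}\frac{|u(x)-u(y)|}{|x-y|^{d+s}}\,dy$ is not controlled by $M(|\nabla|^s u)(x)$ times a clean power of $R$: the sharp pointwise oscillation bound $|u(x)-u(y)|\lesssim|x-y|^s\bigl[M(|\nabla|^s u)(x)+M(|\nabla|^s u)(y)\bigr]$ leaves a kernel $|x-y|^{-d}$ which produces a logarithmic divergence, so the ``factor of order $M(|\nabla|^s u)(x)$'' you want does not come out directly. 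Similarly, the far-region bound $|G(z)|\lesssim|z|\,|G'(\tilde z)|$ re-introduces the same $\tilde z$ problem, and the claimed optimization in $R$ is precisely where the whole proof lives; without it being carried out, the argument has no content for $s\in(0,1)$. In short, the sketch is an honest statement of a plan whose key inequality is likely false as stated and is in any case unproved; the Littlewood--Paley approach in \cite{CW} sidesteps both obstructions and is not a mere ``technical bookkeeping'' variant of your picture.
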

When the function $G$ is no longer $\mathbb{C}^1$, but merely H\"older continuous, we have the following chain rule:
\begin{lemma}[Fractional chain rule for H\"older continuous function \cite{V2}]\label{Holderd}
  Let $G$ be a H\"older continuous function of order $0<\alpha<1$. Then for every $0<s<\alpha, 1<p<\infty$, and $\f{s}{\alpha}<\sigma<1$ we have
  \begin{align}
    \||\nabla|^s G(u)\|_p\lesssim \||u|^{\alpha-\f{s}{\sigma}}\|_{p_1}\||\nabla|^\sigma u\|_{\f{s}{\sigma}p_2}^{\f{s}{\sigma}}
  \end{align}
  provided $\f{1}{p}=\f{1}{p_1}+\f{1}{p_2}$  and $(1-\f{s}{\alpha \sigma})p_1>1$.
\end{lemma}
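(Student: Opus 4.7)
The plan is to combine the Littlewood--Paley characterization of $|\nabla|^s$ with a pointwise estimate that replaces finite differences of $u$ by maximal functions of $|\nabla|^\sigma u$.

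First I would use the square function identity
$$\||\nabla|^s G(u)\|_{L^p} \sim \Bigl\|\Bigl(\sum_N N^{2s}|P_N G(u)|^2\Bigr)^{1/2}\Bigr\|_{L^p}$$
together with the mean--zero property of the Littlewood--Paley kernel $K_N$ of $P_N$ to write
$$P_N G(u)(x) = \int K_N(y)\bigl[G(u(x-y)) - G(u(x))\bigr]\,dy,$$
and then apply the H\"older hypothesis $|G(a)-G(b)| \lesssim |a-b|^\alpha$ to pass to $|u(x-y) - u(x)|^\alpha$.

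Next I would split the exponent as $\alpha = (\alpha-\tfrac{s}{\sigma}) + \tfrac{s}{\sigma}$ and control the two pieces separately. For the factor of exponent $s/\sigma$, the standard fractional Lipschitz bound
$$|u(x) - u(y)| \lesssim |x-y|^\sigma \bigl[\mathcal M(|\nabla|^\sigma u)(x) + \mathcal M(|\nabla|^\sigma u)(y)\bigr]\qquad (0<\sigma<1)$$
produces a factor of $|y|^s$ that exactly compensates the $|y|^{-s}$ worth of oscillation carried by $K_N$; for the remaining factor I would simply use $|u(x-y) - u(x)|^{\alpha - s/\sigma} \lesssim |u(x-y)|^{\alpha-s/\sigma} + |u(x)|^{\alpha-s/\sigma}$. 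Since $|K_N(y)|\,|y|^s$ is dominated, up to a factor $N^{-s}$, by an $L^1$--normalized approximate identity at scale $N^{-1}$, these ingredients combine into an $N$--free pointwise bound of the form
$$N^s|P_N G(u)(x)| \lesssim \mathcal M\bigl(|u|^{\alpha-s/\sigma}\bigr)(x)\cdot\bigl[\mathcal M(|\nabla|^\sigma u)(x)\bigr]^{s/\sigma},$$
up to analogous cross terms that are handled by one more application of $\mathcal M$.

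To close, I would take the $L^p$ norm, absorb the $N$--dependence via the Littlewood--Paley square function estimate, apply H\"older's inequality with $\tfrac{1}{p} = \tfrac{1}{p_1}+\tfrac{1}{p_2}$, and bound each factor by the Hardy--Littlewood maximal inequality. The main obstacle is verifying that the maximal operator is applicable in the correct ranges: the condition $(s/\sigma)p_2 > 1$ is what handles $[\mathcal M(|\nabla|^\sigma u)]^{s/\sigma}$, while the hypothesis $(1-s/(\alpha\sigma))p_1 > 1$ --- equivalently $p_1 > \alpha\sigma/(\alpha\sigma-s)$ --- is precisely what is required to control $\mathcal M(|u|^{\alpha-s/\sigma})$ once the power is reorganized through the identity $|u|^{\alpha-s/\sigma} = |u|^{\alpha(1-s/(\alpha\sigma))}$. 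Careful bookkeeping of these exponents, rather than any substantially new ingredient, is the real content of the proof, since the weakness of H\"older continuity leaves no room for the usual chain--rule identities.
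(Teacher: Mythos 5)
The overall strategy (square function, mean--zero of the Littlewood--Paley kernel, the H\"older bound $|G(a)-G(b)|\lesssim|a-b|^\alpha$, and the exponent split $\alpha=(\alpha-\tfrac{s}{\sigma})+\tfrac{s}{\sigma}$) is indeed the right skeleton, and the pointwise fractional Lipschitz bound you invoke is correct with $\mathcal{M}^2$ in place of $\mathcal{M}$. However, there is a genuine gap at the closing step. After cancelling $|y|^s$ against the oscillation of $K_N$ you obtain a bound $N^s|P_NG(u)(x)|\lesssim F(x)$ with $F$ completely independent of $N$. Such a bound cannot be fed into the square--function estimate: the $\ell^2_N$ norm of a constant sequence over all dyadic $N$ is infinite, and $\ell^\infty_N\not\hookrightarrow\ell^2_N$. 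Writing ``absorb the $N$--dependence via the Littlewood--Paley square function estimate'' hides the fact that at this point there is no $N$--dependence left to absorb; the argument as written proves nothing.

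The fix requires producing genuine off--diagonal decay in $N$. Concretely, one decomposes $u=\sum_M u_M$ and tracks the $(N,M)$ interaction: for $M\lesssim N$ the factor $\min(M|y|,1)$ in the mean--value bound for $u_M$ yields a positive power of $M/N$, while for $M\gtrsim N$ the Bernstein factor $M^{-\sigma}$ yields a positive power of $N/M$, and these are balanced against $N^s$. It is exactly here that the strict inequality $\sigma>\tfrac{s}{\alpha}$ is used; and the stronger hypothesis $(1-\tfrac{s}{\alpha\sigma})p_1>1$ (stronger than the mere $p_1>1$ that boundedness of $\mathcal{M}$ on $L^{p_1}$ requires) enters when one must also control a term involving $|u|^\alpha=\bigl(|u|^{\alpha-s/\sigma}\bigr)^{1/(1-s/(\alpha\sigma))}$ coming from the low--$N$ (or diagonal) regime. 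Your own closing sentence --- that $(1-\tfrac{s}{\alpha\sigma})p_1>1$ is ``precisely what is required to control $\mathcal{M}(|u|^{\alpha-s/\sigma})$'' --- is a symptom of the gap: within your derivation the bare maximal function $\mathcal{M}(|u|^{\alpha-s/\sigma})$ only needs $p_1>1$, so the stronger hypothesis is never actually used, which signals that a necessary piece of the argument is missing.
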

The classical H\"ormander-Mikhlin theorem concerns about the sufficient condition required for a function to be an  $L^p (1<p<\infty)$ multiplier. We should adapt the usual one to be suited for our case and present here the extension form with the power weights. One can refer to \cite{Stein} for further discussion.
  \begin{lemma}\label{H-M}
    Let $T$ be a H\"ormander-Mikhlin multiplier defined on tempered function $f$ i.e,
    \begin{align*}
      \widehat{T f}(\xi):=m(\xi)\hat{f}(\xi),
    \end{align*}
      with its symbol $m(\xi)$ satisfying the following pointwise estimate
    \begin{align*}
      |\nabla^{\alpha} m(\xi)|\lesssim_{\alpha}|\xi|^{-|\alpha|},
    \end{align*}
    for every nonnegative multi-index $\alpha.$ Then for any $1<p<\infty,$ and $-\f{d}{p}<s<d-\f{d}{p},$ we have
    \begin{align}\label{weiBer}
      \||x|^{s}Tf\|_{L_x^p}\lesssim_{p,s}\||x|^{s}f\|_{L_x^p}
    \end{align}
    for all $f$ such that right-hide side is finite.
  \end{lemma}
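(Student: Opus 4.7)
The plan is to reduce the stated inequality to the classical theory of Muckenhoupt weights. Rewriting $\||x|^{s}Tf\|_{L^p_x}^{p} = \int |Tf(x)|^{p}|x|^{sp}\,dx$ and similarly for the right-hand side, the claim is precisely that $T$ is bounded on the weighted Lebesgue space $L^p(\mathbb{R}^d, w\,dx)$ with the power weight $w(x) := |x|^{sp}$. Thus it suffices to identify when $w$ belongs to the Muckenhoupt class $A_p$ and then invoke the weighted boundedness of Calderón-Zygmund operators.

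First I would recall (or verify by a direct cube/ball computation in polar coordinates) that the power weight $w(x) = |x|^{\alpha}$ lies in $A_p(\mathbb{R}^d)$ if and only if
\begin{equation*}
-d < \alpha < d(p-1).
\end{equation*}
Substituting $\alpha = sp$, this reads $-d/p < s < d - d/p$, exactly the range hypothesized. This is the step where the precise numerology of the exponent range comes from, so it is the one that deserves the most care, but it is essentially a standard calculation in harmonic analysis.

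Next, I would observe that the hypothesis $|\nabla^{\alpha} m(\xi)| \lesssim_{\alpha} |\xi|^{-|\alpha|}$ for every nonnegative multi-index $\alpha$ places $T$ squarely within the scope of the Hörmander-Mikhlin multiplier theorem. Such multipliers are realized by convolution with a Calderón-Zygmund kernel, i.e., $T$ is a Calderón-Zygmund operator of convolution type bounded on unweighted $L^2$. By the classical theorem of Coifman-Fefferman (see for instance the weighted norm inequalities in Stein's book, reference \cite{Stein}), every Calderón-Zygmund operator extends to a bounded operator on $L^p(w\,dx)$ whenever $1 < p < \infty$ and $w \in A_p$. Applying this to $T$ with $w = |x|^{sp}$ yields
\begin{equation*}
\int |Tf(x)|^p |x|^{sp}\,dx \lesssim_{p,s} \int |f(x)|^p |x|^{sp}\,dx,
\end{equation*}
which is the desired inequality \eqref{weiBer}.

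The only real obstacle is the $A_p$ verification for power weights, together with pinning down that the constants coming from the Coifman-Fefferman theorem depend only on $p$, $s$ and the Hörmander-Mikhlin constants of $m$ (which themselves depend only on $d$). Since the paper only needs the qualitative boundedness for a fixed admissible pair $(p,s)$, no uniformity issues arise, and the proof is essentially a citation plus the power-weight $A_p$ computation.
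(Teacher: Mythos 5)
Your argument is correct and is the standard route: recognize the inequality as $L^p$-boundedness of $T$ with respect to the power weight $|x|^{sp}$, check that this weight lies in $A_p$ precisely when $-d/p < s < d - d/p$, and then invoke the Coifman--Fefferman weighted boundedness of Calder\'on--Zygmund operators (Hörmander--Mikhlin multipliers being the prototype). The paper itself gives no proof and simply refers to \cite{Stein}, whose treatment of $A_p$ weights and singular integrals is exactly what you outline, so your proof matches the intended approach.
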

  \begin{remark}
    In particular, the operator $N^{-s}|\nabla|^{s}P_{<N}$ and $N^s|\nabla|^{-s}P_{\geq N}$ are all H\"ormander-Mikhlin multiplier, as well as the frequency localized operator $P_{N}, P_{\gtrless N}.$
  \end{remark}

At the end of this section,  we will record some fundamental tools.  One  can find details in  \cite{TVZ} and the materials therein .
\begin{lemma}[Hardy-Littlewood-Sobolev Inequality]\label{HLS}
   Let $1< p,q < \infty, d\geq 1, 0<s<d,$ and $\alpha, \beta \in \R$ obey the condition
   \begin{align*}
     \alpha>-\f{d}{p'}\\
     \beta>-\f{d}{q'}\\
     1\leq \f{1}{p}+\f{1}{q}\leq 1+s
   \end{align*}
   and the scaling condition
   \begin{align*}
     \alpha+\beta-d+s=-\f{d}{p'}-\f{d}{q'},
   \end{align*}
  Then for any spherically symmetric $u: \R^d \rightarrow \mathbb{C},$ we have
   \begin{align}\label{raso}
     \||x|^{\beta}u\|_{L^{q'}(\R^d)}\lesssim_{\alpha, \beta, p, q, s}\||x|^{-\alpha}|\nabla|^s u\|_{L^p(\R^d)}.
   \end{align}
 \end{lemma}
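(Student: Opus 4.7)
The plan is to reduce the weighted radial inequality to a pair of one-dimensional weighted Hardy estimates via the Riesz-potential representation. First, I would write $u = c_{d,s}\, I_s(|\nabla|^s u)$, where $I_s$ denotes the Riesz potential of order $s$, so that the target inequality becomes
\begin{equation*}
\||x|^\beta I_s F\|_{L^{q'}(\R^d)} \lesssim \||x|^{-\alpha} F\|_{L^p(\R^d)},
\end{equation*}
with $F = |\nabla|^s u$ radial and, by monotone convergence, nonnegative.

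Next, I would exploit radiality to control the kernel pointwise. Writing $r = |x|$, $\rho = |y|$, the spherical average of $|x-y|^{-(d-s)}$ is dominated by $\max(r,\rho)^{-(d-s)}$: when $r$ and $\rho$ are well separated $|x-y|\sim \max(r,\rho)$, and when $r\sim \rho$ the residual integral over the sphere converges because $s>0$. This yields the pointwise bound
\begin{equation*}
I_s F(r) \lesssim \frac{1}{r^{d-s}}\int_0^r F(\rho)\rho^{d-1}\,d\rho + \int_r^\infty F(\rho)\rho^{s-1}\,d\rho =: \mathcal{A}F(r)+\mathcal{B}F(r).
\end{equation*}

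Third, expressing radial $L^p$ norms as one-dimensional integrals against $r^{d-1}\,dr$, the problem reduces to the two weighted Hardy inequalities
\begin{equation*}
\|r^\beta \mathcal{A}F\|_{L^{q'}(r^{d-1}dr)} + \|r^\beta \mathcal{B}F\|_{L^{q'}(r^{d-1}dr)} \lesssim \|r^{-\alpha}F\|_{L^p(r^{d-1}dr)}.
\end{equation*}
The scaling condition $\alpha+\beta-d+s = -d/p' - d/q'$ makes both operators homogeneous of the correct degree between source and target, while the strict bounds $\alpha>-d/p'$ and $\beta>-d/q'$ give the integrability needed to apply Schur's test to the explicit power kernels $\chi_{\{\rho<r\}}r^{\beta-(d-s)}\rho^{\alpha+d-1}$ (for $\mathcal{A}$) and $\chi_{\{\rho>r\}}r^\beta \rho^{\alpha+s-1}$ (for $\mathcal{B}$); this is equivalent to the Muckenhoupt characterization for one-weight Hardy inequalities.

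The principal difficulty will be verifying these weighted Hardy inequalities across the full claimed range of $(p,q,\alpha,\beta)$. One must check that the condition $1\leq 1/p+1/q\leq 1+s$ places $q'$ and $p$ in the correct relative order so that Schur's test (or Minkowski's integral inequality) may be applied in the right direction, and that the strict lower bounds on $\alpha$ and $\beta$ combined with the scaling identity leave a positive exponent in each Schur integral, so the test constant is finite.
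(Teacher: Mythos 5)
The paper does not actually supply a proof of this lemma; it simply points the reader to \cite{TVZ}, so your argument is being judged on its own. The broad strategy—write $u=c_{d,s}I_s(|\nabla|^su)$, exploit radiality to dominate the spherically averaged Riesz kernel, and reduce to one-dimensional weighted Hardy estimates via Schur's test—is indeed the standard route for this family of inequalities, and the scaling computation and the role of the hypotheses $\alpha>-d/p'$, $\beta>-d/q'$ in guaranteeing the finiteness of the Schur constants are correctly identified.

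The gap is in the claimed pointwise kernel bound. You assert that the spherical average of $|x-y|^{-(d-s)}$ over $|y|=\rho$ is $\lesssim\max(r,\rho)^{-(d-s)}$, with the justification that the residual spherical integral near $r\sim\rho$ ``converges because $s>0$.'' That is false: writing $\theta$ for the angle between $x$ and $y$ on the unit sphere, the integrand behaves like $\theta^{-(d-s)}$ against the surface measure $\theta^{d-2}\,d\theta$, so the integral $\int_{S^{d-1}}|e_1-\omega|^{-(d-s)}\,d\sigma(\omega)$ behaves like $\int_0\theta^{s-2}\,d\theta$ and converges only when $s>1$. For $s\le 1$ the averaged kernel instead satisfies
\begin{align*}
\int_{S^{d-1}}\frac{d\sigma(\omega)}{|r e_1-\rho\omega|^{d-s}}\ \sim\ \max(r,\rho)^{-(d-1)}\,|r-\rho|^{s-1}
\end{align*}
near the diagonal (with a logarithm at $s=1$), which is strictly larger than $\max(r,\rho)^{-(d-s)}$ as $\rho\to r$. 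This is fatal to your argument as stated, because every application of Lemma~\ref{HLS} in the present paper uses $s<1$ (for instance $s=\tfrac{1-\varepsilon}{2}$ in \eqref{nonla} and in the proof of \eqref{radso}, and more generally $s=s_c<\tfrac12$), precisely the regime where your pointwise bound fails. The repair is to split the radial kernel into off-diagonal ($\rho\le r/2$ or $\rho\ge 2r$) and diagonal ($r/2<\rho<2r$) parts: the off-diagonal part genuinely does satisfy $K(r,\rho)\lesssim\max(r,\rho)^{-(d-s)}$ and is handled by your two Hardy operators $\mathcal A,\mathcal B$ exactly as you propose, while the diagonal part is a genuine one-dimensional fractional integral of order $s$ in the radial variable and must be estimated by the one-dimensional Hardy--Littlewood--Sobolev (or Stein--Weiss) inequality. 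It is in that diagonal estimate—not in the Schur test for $\mathcal A,\mathcal B$—that the hypothesis $\tfrac1p+\tfrac1q\le 1+s$ (equivalently $\tfrac1p-\tfrac1{q'}\le s$) is actually needed; your proposal currently attributes the role of this condition only to the ordering of exponents for Schur/Minkowski, which is not where it does its essential work when $s\le 1$.
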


\begin{lemma}\label{Jap}
  If $f:\R^d \rightarrow \mathbb{C}, 1<p<\infty, 0<\alpha<\f{d}{p},$ and $N>0,$ then
  \begin{align}
    \||x|^{-\alpha}P_{<N}f\|_{L^p(\R^d)}\lesssim_{\alpha, p}\langle N\rangle^{\alpha}\|\langle x\rangle^{-\alpha}f\|_{L^p(\R^d)}.
  \end{align}
\end{lemma}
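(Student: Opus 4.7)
The plan is to decompose $f$ at the scale $|x|=1$, which is where $|x|^{-\alpha}$ and $\langle x\rangle^{-\alpha}$ part ways in behaviour. Write $f = f_1 + f_2$ with $f_1 := \chi_{|x|\leq 1}f$ and $f_2 := \chi_{|x|>1}f$. For the far piece $f_2$, the two weights are comparable on its support, so by Lemma \ref{H-M} applied to the H\"ormander--Mikhlin multiplier $P_{<N}$ with exponent $s = -\alpha$ (which lies in the admissible range $(-d/p,\, d - d/p)$ precisely because $0 < \alpha < d/p$),
\begin{align*}
  \||x|^{-\alpha} P_{<N} f_2\|_{L^p} \lesssim \||x|^{-\alpha} f_2\|_{L^p} \sim \|\langle x\rangle^{-\alpha} f_2\|_{L^p} \leq \|\langle x\rangle^{-\alpha} f\|_{L^p},
\end{align*}
and no factor of $\langle N\rangle^{\alpha}$ is incurred in this regime.

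For the near piece $f_1$, the weight $\langle x\rangle^{-\alpha}$ is comparable to $1$ on its support, so it is enough to establish $\||x|^{-\alpha} P_{<N} f_1\|_{L^p} \lesssim \langle N\rangle^{\alpha}\|f_1\|_{L^p}$. I would further split the output domain at radius $r := \langle N\rangle^{-1}$. On $\{|x|>r\}$ one simply bounds $|x|^{-\alpha}\leq r^{-\alpha} = \langle N\rangle^{\alpha}$ and combines with the $L^p$-boundedness of $P_{<N}$. On $\{|x|\leq r\}$ one pulls out $|x|^{-\alpha}$ by H\"older, using $\|\chi_{|x|\leq r}|x|^{-\alpha}\|_{L^p}\sim r^{d/p - \alpha}$ (finite exactly because $\alpha < d/p$), and estimates $P_{<N}f_1$ in $L^\infty$ through Young's inequality applied to its Schwartz convolution kernel $N^{d}K(N\cdot)$, giving the Bernstein bound $\|P_{<N}f_1\|_{L^\infty}\lesssim N^{d/p}\|f_1\|_{L^p}$. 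At the chosen scale the product $r^{d/p - \alpha}N^{d/p}$ also equals $\langle N\rangle^{\alpha}$: when $N\geq 1$ and $r = 1/N$ both terms are $N^{\alpha}$, and when $N\leq 1$ and $r = 1$ both are $O(1)$ since $N^{d/p}\leq 1$. Combining the two pieces and using $\|f_1\|_{L^p}\sim \|\langle x\rangle^{-\alpha} f_1\|_{L^p}\leq \|\langle x\rangle^{-\alpha} f\|_{L^p}$ closes the estimate.

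The main obstacle is really just bookkeeping: Lemma \ref{H-M} transports a genuine power weight through a H\"ormander--Mikhlin multiplier but cannot be applied directly to the Japanese bracket. Conceptually, $P_{<N}$ can produce a singularity of order $|x|^{-\alpha}$ at the origin out of data only integrable against $\langle x\rangle^{-\alpha}$, at the price of a factor $\langle N\rangle^{\alpha}$ measuring the spatial resolution of the Littlewood--Paley projection; the cutoff at $|x|=1$ is what decouples the regime where this factor is needed (small $|x|$, any $N$) from the regime where it is vacuous (large $|x|$, where the two weights agree). Once that split is in place, all constants come directly from H\"older, Bernstein and Lemma \ref{H-M}, and balancing the two contributions at scale $r = \langle N\rangle^{-1}$ yields exactly the claimed power of $\langle N\rangle$.
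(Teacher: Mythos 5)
Your proof is correct. Since the paper itself gives no proof of this lemma and simply cites \cite{TVZ} for the details, I can only check your argument on its own merits, and it holds up. The decomposition $f = \chi_{\{|x|\le 1\}}f + \chi_{\{|x|>1\}}f$ is exactly the right move: on the far piece the weights $|x|^{-\alpha}$ and $\langle x\rangle^{-\alpha}$ are comparable and Lemma \ref{H-M} applies directly with $s=-\alpha$, whose admissibility range $(-d/p,\,d-d/p)$ is met precisely because $0<\alpha<d/p$ (the constants in Lemma \ref{H-M} are uniform in $N$ since $\varphi(\cdot/N)$ satisfies the Mikhlin bounds uniformly, which is worth saying explicitly but is standard). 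On the near piece the further split at radius $r=\langle N\rangle^{-1}$, H\"older with $\|\chi_{\{|x|\le r\}}|x|^{-\alpha}\|_{L^p}\sim r^{d/p-\alpha}$ (finite since $\alpha<d/p$), and the Bernstein bound $\|P_{<N}g\|_{L^\infty}\lesssim N^{d/p}\|g\|_{L^p}$ combine to yield exactly $\langle N\rangle^{\alpha}$ in both regimes $N\geq 1$ and $N\leq 1$. This is a natural and self-contained argument; the cited source \cite{TVZ} proves the analogous lemma along essentially the same near/far lines, so your proposal is compatible with the reference the paper defers to.
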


\section{ Weighted  Strichartz inequality}\label{further}
Motivated by the work of \cite{TVZ} which handled the mass-critical case, we adapt the argument to tackle the case without conserved quantities. In practice, we  introduce weighted Strichartz norm suited for our case. To be more precise, we define $\|u\|_{\mathcal{S}(I\times \R^d)}$ and $\|u\|_{\mathcal{N}(I\times \R^d)}$ respectively as follows:
\begin{align*}
  \|u\|_{\mathcal{S}(I\times \R^d)}&=\||x|^{-\f{1+\varepsilon}{2}}|\nabla|^{\f{1-\varepsilon}{2}+s_c}u\|_{L_{t,x}^2(I\times \R^d)}+\||\nabla|^{s_c}u\|_{L_t^\infty L_x^2(I\times \R^d)};\\
  \|u\|_{\mathcal{N}(I\times \R^d)}&=\||x|^{\f{1+\varepsilon}{2}}|\nabla|^{-\f{1-\varepsilon}{2}+s_c}u\|_{L_{t,x}^2(I\times \R^d)};
\end{align*}
where $\varepsilon>0$  is a sufficiently small constant depending on $d$ and $s_c$.
By Lemma \ref{H-M}, we obtain that corresponding  Bernstein inequalities with respect to the norms $\|u\|_{\mathcal{S}(I\times \R^d)}$ and  $\|u\|_{\mathcal{N}(I\times \R^d)}$.
\begin{lemma}\label{weiber}
For any $s> 0$ and dyadic number $N>0$, we have
  \begin{align}
    \||\nabla|^{s}u_{<N}\|_{\mathcal{S}(I\times \R^d)}&\lesssim  N^s \|u_{<N}\|_{\mathcal{S}(I\times \R^d)};\\
  \||\nabla|^{-s}u_{>N}\|_{\mathcal{S}(I\times \R^d)}&\lesssim N^{-s}\|u_{>N}\|_{\mathcal{S}(I\times \R^d)};\\
  \||\nabla|^{s}u_{<N}\|_{\mathcal{N}(I\times \R^d)}&\lesssim N^{s}\|u_{<N}\|_{\mathcal{N}(I\times \R^d)};\\
  \||\nabla|^{-s}u_{>N}\|_{\mathcal{N}(I\times \R^d)}&\lesssim N^{-s}\|u_{>N}\|_{\mathcal{N}(I\times \R^d)}.
  \end{align}
\end{lemma}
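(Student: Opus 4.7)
The plan is to deduce all four estimates directly from the weighted H\"ormander--Mikhlin bound (Lemma \ref{H-M}), exploiting the remark that the symbols $N^{-s}|\nabla|^s P_{<N}$ and $N^s|\nabla|^{-s}P_{>N}$ (for each fixed dyadic $N$ and $s>0$) are H\"ormander--Mikhlin multipliers with constants independent of $N$. Since the Fourier multipliers involved all commute, it suffices for each estimate to verify the appropriate weighted $L_x^2$ bound pointwise in $t$ and then integrate.

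For inequality (1), write
\begin{align*}
|\nabla|^s u_{<N} \;=\; N^s\,\bigl(N^{-s}|\nabla|^s P_{<N}\bigr) u_{<N},
\end{align*}
and commute this operator past $|\nabla|^{(1-\varepsilon)/2+s_c}$. Applying Lemma \ref{H-M} with $p=2$ and weight exponent $\sigma=-(1+\varepsilon)/2$ (which lies in $(-d/2,d/2)$ provided $\varepsilon$ is small, which is automatic for $d\geq 4$) yields
\begin{align*}
\bigl\||x|^{-\tfrac{1+\varepsilon}{2}}|\nabla|^{\tfrac{1-\varepsilon}{2}+s_c}|\nabla|^s u_{<N}\bigr\|_{L_x^2}
\lesssim N^s\bigl\||x|^{-\tfrac{1+\varepsilon}{2}}|\nabla|^{\tfrac{1-\varepsilon}{2}+s_c} u_{<N}\bigr\|_{L_x^2}.
\end{align*}
Taking $L_t^2$ handles the first piece of the $\mathcal{S}$-norm. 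The $L_t^\infty L_x^2$ piece is standard (unweighted) Bernstein applied to $P_{<N}$.

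Inequalities (2), (3), (4) follow from the same template. For (2) replace $N^{-s}|\nabla|^s P_{<N}$ by the equally H\"ormander--Mikhlin operator $N^s|\nabla|^{-s}P_{>N}$ and argue verbatim on both pieces of the $\mathcal{S}$-norm. For (3) and (4) the weight exponent becomes $+(1+\varepsilon)/2$, which also lies in the admissible range $(-d/2,d/2)$ of Lemma \ref{H-M} for $d\geq 4$ and small $\varepsilon$; the same commutation argument then reduces matters to one application of the weighted multiplier bound.

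I do not anticipate any real obstacle: the single technical point is confirming that the weight exponents $\pm(1+\varepsilon)/2$ satisfy the hypothesis $-d/p<\sigma<d-d/p$ of Lemma \ref{H-M} with $p=2$, which is immediate for all $d\geq 2$ and $\varepsilon$ sufficiently small. Everything else is symbol commutation and an application of the H\"ormander--Mikhlin theorem with power weights, which is why the $\mathcal{S}$ and $\mathcal{N}$ norms were designed so that the $|x|$-weights and fractional derivatives occupy the same scaling position.
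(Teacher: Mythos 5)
Your proof is correct and is exactly the argument the paper intends: the paper offers no separate proof of Lemma \ref{weiber}, only the remark that it follows from the weighted H\"ormander--Mikhlin bound (Lemma \ref{H-M}) applied to the $N$-uniform multipliers $N^{-s}|\nabla|^s P_{<N}$ and $N^s|\nabla|^{-s}P_{\geq N}$, and your commutation-then-apply-Lemma-\ref{H-M} argument spells this out faithfully, including the verification that the weight exponents $\pm\tfrac{1+\varepsilon}{2}$ lie in the admissible range $(-d/2,d/2)$.
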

The association of $\|u\|_{\mathcal{S}(I\times \R^d)}$  and  $\|u\|_{\mathcal{N}(I\times \R^d)}$ with equation \eqref{sch0} is illuminated by the following weighted Strichartz estimate and radial Sobolev embedding.
\begin{proposition}[Weighted Strichartz estimate \cite{Vilela}]
Let $u,G:I\times \R^d \rightarrow \mathbb{C}$ satisfy $(i\partial_t+\Delta)u=G$ in the sense of distributions, then we have
\begin{align}\label{weightstr}
\|u\|_{\mathcal{S}(I\times \R^d)}\lesssim \|u(t_0)\|_{\dot{H}^{s_c}(\R^d)}+\|G\|_{\mathcal{N}(I\times \R^d)},
\end{align}
for all $t_0 \in I$.
\end{proposition}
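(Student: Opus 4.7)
The plan is to start from Duhamel's formula
\begin{equation*}
u(t) = e^{i(t-t_0)\Delta} u(t_0) - i\int_{t_0}^{t} e^{i(t-s)\Delta} G(s)\,ds
\end{equation*}
and handle the free-evolution and Duhamel pieces separately, appealing to the radial weighted Kato--Vilela Strichartz estimate for the former and to its $TT^{*}$ dualization for the latter. The $L^\infty_t \dot H^{s_c}_x$ component of $\|u\|_{\mathcal S}$ is immediate on the free piece by unitarity of $e^{it\Delta}$ on $\dot H^{s_c}_x$, and on the Duhamel piece it follows by duality against the weighted $L^2_{t,x}$ bound established below.

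The central input is the radial weighted homogeneous estimate (essentially due to Vilela)
\begin{equation*}
\bigl\||x|^{-a}|\nabla|^{b} e^{it\Delta}f\bigr\|_{L^2_{t,x}(\R\times\R^d)} \lesssim \|f\|_{L^2_x(\R^d)}, \qquad a+b=1,
\end{equation*}
valid for radial $f$ in the range $0<a<d/2$ with an upper constraint on $b$ depending on $d$. Choosing $a=(1+\varepsilon)/2$, $b=(1-\varepsilon)/2$ and applying this to $|\nabla|^{s_c} u(t_0)$ in place of $f$ yields
\begin{equation*}
\bigl\||x|^{-(1+\varepsilon)/2}|\nabla|^{(1-\varepsilon)/2 + s_c} e^{i(t-t_0)\Delta}u(t_0)\bigr\|_{L^2_{t,x}} \lesssim \|u(t_0)\|_{\dot H^{s_c}_x};
\end{equation*}
the smallness assumption $\varepsilon=\varepsilon(d,s_c)$ built into the definitions of $\mathcal S$ and $\mathcal N$ is exactly what makes the pair $(a,b)$ admissible for every $d\geq 4$ and every $0<s_c<1/2$.

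For the Duhamel piece I would run the $TT^{*}$ companion of the above estimate. Writing $Wf = |x|^{-a}|\nabla|^{b} e^{it\Delta}f$ and $W^{*}g = \int_{\R} e^{-is\Delta}|\nabla|^{b}(|x|^{-a}g(s))\,ds$, the bound $\|WW^{*}\|_{L^2_{t,x}\to L^2_{t,x}}\lesssim 1$ combined with the substitution $H:=|x|^{a}|\nabla|^{-b+s_c} G$ (so that $\|H\|_{L^2_{t,x}} = \|G\|_{\mathcal N}$ and $G = |\nabla|^{b-s_c}(|x|^{-a}H)$) produces the untruncated inhomogeneous bound
\begin{equation*}
\Bigl\||x|^{-a}|\nabla|^{b+s_c}\int_{\R} e^{i(t-s)\Delta}G(s)\,ds\Bigr\|_{L^2_{t,x}} \lesssim \|G\|_{\mathcal N}.
\end{equation*}
To replace $\int_{\R}$ by the retarded integral $\int_{t_0}^{t}$ I would use the decomposition $\chi_{(t_0,t)}(s) = \tfrac12[\mathrm{sgn}(t-s) + \mathrm{sgn}(s-t_0)]$: the $\mathrm{sgn}(s-t_0)$ piece collapses to an initial-data evolution controlled by the homogeneous estimate applied to $\int_{\R} e^{-is\Delta}\mathrm{sgn}(s-t_0)G(s)\,ds$, while the $\mathrm{sgn}(t-s)$ piece reduces, by Plancherel in time, to an $L^2$ bound for the Hilbert transform twisted by the Schr\"odinger propagator.

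The main obstacle is precisely this last truncation step: both sides of the weighted Strichartz sit in $L^2_t$, so one is at the Christ--Kiselev endpoint where the classical truncation lemma is unavailable, making the explicit $\mathrm{sgn}$ decomposition (or equivalently an inhomogeneous form of Vilela's estimate obtained directly from resolvent identities) essential. A secondary technical nuisance is verifying the admissible range of $(a,b)$ in the radial Kato inequality, which dictates the smallness of $\varepsilon$ in terms of $d$ and $s_c$.
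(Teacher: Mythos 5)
The paper states this as a cited result (\cite{Vilela}) and supplies no proof, so there is no in-paper argument to compare against; I will assess the sketch on its own. Your route is the natural one and most of it is sound: the $L_t^\infty\dot H^{s_c}_x$ component is correctly dispatched on both pieces (the Duhamel $L^\infty_t$ bound needs no Christ--Kiselev, since the truncated dual estimate $\sup_t\bigl\|\int_{t_0}^t e^{-is\Delta}|\nabla|^{b}(|x|^{-a}H(s))\,ds\bigr\|_{L^2_x}\lesssim\|H\|_{L^2_{t,x}}$ holds simply because restriction of the $s$-integral is an $L^2_s$ contraction), the choice $a=\tfrac{1+\varepsilon}{2}$, $b=\tfrac{1-\varepsilon}{2}$ with $a+b=1$ is correctly matched to the scaling of $\mathcal S$ and $\mathcal N$, the substitution $H=|x|^{a}|\nabla|^{-b+s_c}G$ correctly converts the $TT^{*}$ statement into the untruncated inhomogeneous bound, and the identity $\chi_{(t_0,t)}(s)=\tfrac12[\mathrm{sgn}(t-s)+\mathrm{sgn}(s-t_0)]$ is the right device at the $L^2_t$ Christ--Kiselev endpoint. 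The $\mathrm{sgn}(s-t_0)$ piece does reduce to the homogeneous estimate and its dual exactly as you describe.

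The genuine gap is the $\mathrm{sgn}(t-s)$ piece, and it is not a routine one. You reduce it, correctly, to a bound for the operator $|x|^{-a}|\nabla|^{2b}(i\partial_t+\Delta)^{-1}|x|^{-a}$ on $L^2_{t,x}$, which after Plancherel in $t$ is the \emph{uniform weighted principal-value resolvent estimate} $\sup_{\tau}\||x|^{-a}|\nabla|^{2b}\,\mathrm{p.v.}(\Delta+\tau)^{-1}|x|^{-a}\|_{L^2_x\to L^2_x}\lesssim 1$. This does \emph{not} follow from the homogeneous weighted Strichartz bound via Kato's abstract smoothing theory: Kato-smoothness of $|x|^{-a}|\nabla|^{b}$ is equivalent only to a uniform bound on the \emph{imaginary part} of the boundary-value resolvent (the difference $R(\tau+i0)-R(\tau-i0)$), while the $\mathrm{sgn}(t-s)$ piece needs the real part. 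The full uniform resolvent estimate has to be proved or cited separately -- for the free Laplacian with radial data and fractional-order weights this lives in the circle of Agmon/Kato--Yajima estimates and their radial refinements (Barcel\'o--Ruiz--Vega, Sugimoto, Ruzhansky--Sugimoto), and indeed is the content one must extract from Vilela-type resolvent arguments. You flag this step as ``the main obstacle'' and gesture at ``resolvent identities,'' which is the right instinct, but as written the sketch leaves the essential half of the truncation open and could be read as claiming it is a corollary of the homogeneous bound, which it is not.
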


Using  \eqref{raso}, we will get the following radial Sobolev embedding.
\begin{lemma}[Radial Sobolev embedding]
Let $u$ be spherically symmetric and $d\geq 4$, then we have
\begin{align}\label{radso}
  \||\nabla|^{s_c}u\|_{L_t^2 L_x^{\f{2d}{d-2}}}&\lesssim \|u\|_{\mathcal{S}}.
  \end{align}
 \end{lemma}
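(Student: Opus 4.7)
The plan is to reduce the space-time inequality to a pointwise-in-$t$ radial Sobolev embedding that falls directly out of the weighted Hardy--Littlewood--Sobolev inequality (Lemma \ref{HLS}); the $L_t^2$ norm then passes through trivially. Note that only the first term in the definition of $\|u\|_{\mathcal{S}}$ is needed; the $L_t^\infty\dot H_x^{s_c}$ piece plays no role here.

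\textbf{Step 1 (Reduction).} Set $v(t,\cdot):=|\nabla|^{s_c}u(t,\cdot)$. Because $u$ is radial and the Fourier multiplier $|\nabla|^{s_c}$ commutes with rotations, $v$ is radial in $x$ for each $t$. The desired estimate becomes
\begin{equation*}
\|v\|_{L_t^2 L_x^{\f{2d}{d-2}}}\lesssim \bigl\||x|^{-\f{1+\varepsilon}{2}}|\nabla|^{\f{1-\varepsilon}{2}}v\bigr\|_{L_{t,x}^2},
\end{equation*}
and by Fubini it suffices to prove the pointwise-in-$t$ spatial estimate
\begin{equation*}
\|v(t)\|_{L_x^{\f{2d}{d-2}}}\lesssim \bigl\||x|^{-\f{1+\varepsilon}{2}}|\nabla|^{\f{1-\varepsilon}{2}}v(t)\bigr\|_{L_x^2}
\end{equation*}
and then square and integrate over $I$.

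\textbf{Step 2 (Radial HLS).} Apply Lemma \ref{HLS} to the radial function $v(t,\cdot)$ with parameters
\begin{equation*}
\beta=0,\quad q'=\tf{2d}{d-2}\ (\text{so } q=\tf{2d}{d+2}),\quad \alpha=\tf{1+\varepsilon}{2},\quad p=2,\quad s=\tf{1-\varepsilon}{2}.
\end{equation*}
The four hypotheses are straightforward to verify:
\begin{align*}
\alpha &=\tfrac{1+\varepsilon}{2}>-\tfrac{d}{2}=-\tfrac{d}{p'},\qquad \beta=0>-\tfrac{d-2}{2}=-\tfrac{d}{q'} \ (\text{using } d\geq 3),\\
\tfrac{1}{p}+\tfrac{1}{q}&=\tfrac{1}{2}+\tfrac{d+2}{2d}=1+\tfrac{1}{d}\in\bigl[1,\,1+s\bigr]\ \text{provided}\ \varepsilon\leq 1-\tfrac{2}{d}\ (\text{uses } d\geq 4),\\
\alpha+\beta-d+s &=\tfrac{1+\varepsilon}{2}-d+\tfrac{1-\varepsilon}{2}=1-d=-\tfrac{d}{2}-\tfrac{d-2}{2}=-\tfrac{d}{p'}-\tfrac{d}{q'}.
\end{align*}
The inequality \eqref{raso} then yields exactly the pointwise estimate from Step 1.

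\textbf{Step 3 (Conclusion).} Substituting $v=|\nabla|^{s_c}u$, squaring, and integrating in $t\in I$,
\begin{equation*}
\||\nabla|^{s_c}u\|_{L_t^2 L_x^{\f{2d}{d-2}}}\lesssim \bigl\||x|^{-\f{1+\varepsilon}{2}}|\nabla|^{\f{1-\varepsilon}{2}+s_c}u\bigr\|_{L_{t,x}^2}\leq \|u\|_{\mathcal{S}},
\end{equation*}
by the very definition of the $\mathcal{S}$-norm. The only non-routine point in the whole argument is the parameter check $\tfrac{1}{p}+\tfrac{1}{q}\leq 1+s$, which is precisely where $d\geq 4$ (and the smallness of $\varepsilon$) enters; this is consistent with the paper's standing assumption that $\varepsilon$ is a small constant depending on $d$ and $s_c$.
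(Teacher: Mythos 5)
Your proof is correct and is exactly the argument the paper intends (the paper simply states that the embedding follows from \eqref{raso} without spelling out the parameter choices). Your choice $\beta=0$, $q'=\tfrac{2d}{d-2}$, $\alpha=\tfrac{1+\varepsilon}{2}$, $p=2$, $s=\tfrac{1-\varepsilon}{2}$ in Lemma \ref{HLS} is the natural one, and your verification of the four hypotheses, including the observation that $\tfrac1p+\tfrac1q=1+\tfrac1d\leq 1+s$ is where $d\geq 4$ and the smallness of $\varepsilon$ are used, is accurate.
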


\begin{lemma}
  If $u, v: I\times \R^d \rightarrow \mathbb{C}$ are spherically symmetric and $d\geq 4$,  then
  \begin{align}\label{nonesti}
    \||u|^{\f{4}{d-2s_c}}v\|_{\mathcal{N}} \lesssim \||\nabla|^{s_c}u\|_{L_t^\infty L_x^2}^{\f{4}{d-2s_c}}\||\nabla|^{s_c}v\|_{L_t^2 L_x^{\f{2d}{d-2}}}\lesssim \||\nabla|^{s_c}u\|_{L_t^\infty L_x^2}^{\f{4}{d-2s_c}} \|v\|_{\mathcal{S}}.
    \end{align}
    \end{lemma}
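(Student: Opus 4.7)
The second inequality is \eqref{radso} applied to $v$, so the substance lies in the first inequality. Write $a=\f{4}{d-2s_c}$ and $\sigma=\f{1-\varepsilon}{2}-s_c$; since $s_c<\f12$, $\sigma$ is positive once $\varepsilon$ is small enough, and the $\mathcal{N}$-norm of $F:=|u|^{a}v$ reads $\big\||x|^{(1+\varepsilon)/2}|\nabla|^{-\sigma}F\big\|_{L^2_{t,x}}$. The plan has four steps: (i) a dual Hardy--Sobolev reduction that converts the fractional weight $|\nabla|^{-\sigma}$ into the pure power weight $|x|^{1-s_c}$; (ii) a H\"older split in $x$ of the product $|u|^{a}v$; (iii) a second application of Lemma \ref{HLS} to each of the two resulting factors; (iv) a H\"older closure in $t$ with the $u$-factor in $L^\infty_t$ and the $v$-factor in $L^2_t$.

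For step (i), set $g=|\nabla|^{-\sigma}F$ and invoke Lemma \ref{HLS} with $p=q=2$, $s=\sigma$, $\beta=\f{1+\varepsilon}{2}$; the scaling identity forces $\alpha=s_c-1$, the range conditions degenerate to $\sigma\ge 0$, and $F$ is radial because $u,v$ are, yielding pointwise in $t$
\begin{align*}
\big\||x|^{(1+\varepsilon)/2}|\nabla|^{-\sigma}F\big\|_{L^2_x}\lesssim\big\||x|^{1-s_c}F\big\|_{L^2_x}.
\end{align*}
For step (ii), fix real parameters $\alpha_1,\alpha_2$ and exponents $q_1,q_2$ with $a\alpha_1+\alpha_2=1-s_c$ and $\f{a}{q_1}+\f{1}{q_2}=\f12$, write $|x|^{1-s_c}=(|x|^{\alpha_1})^{a}\cdot|x|^{\alpha_2}$, and apply H\"older. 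For step (iii), select the parameters so that Lemma \ref{HLS} furnishes
\begin{align*}
\big\||x|^{\alpha_1}u\big\|_{L^{q_1}_x}\lesssim \||\nabla|^{s_c}u\|_{L^2_x},\qquad \big\||x|^{\alpha_2}v\big\|_{L^{q_2}_x}\lesssim \||\nabla|^{s_c}v\|_{L^{2d/(d-2)}_x}.
\end{align*}
The scaling in Lemma \ref{HLS} dictates $\alpha_1=\f{d}{2}-s_c-\f{d}{q_1}$ and $\alpha_2=\f{d-2}{2}-s_c-\f{d}{q_2}$, and a direct substitution shows that the compatibility identity $a\alpha_1+\alpha_2=1-s_c$ collapses, via the H\"older relation, to the tautology $a(\f{d}{2}-s_c)=2$, which is precisely the criticality $s_c=\f{d}{2}-\f{2}{p}$; so the two scaling constraints are automatically compatible.

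The only delicate point is, within step (iii), verifying that the HLS range condition $1\le \f{1}{p}+\f{1}{q}\le 1+s_c$ admits compatible $(q_1,q_2)$ uniformly in $d\ge 4$ and $s_c\in(0,\f12)$. The easy choice $q_1=\f{2d}{d-2s_c}$ (forcing $\alpha_1=0$, so the $u$-estimate reduces to standard Sobolev embedding and needs no radiality) gives $q_2=\f{2d}{d-4}$, which is degenerate at $d=4$ and violates the upper HLS constraint on the $v$-factor when $s_c<\f{1}{d}$. Both defects are repaired by choosing $\alpha_1>0$ strictly, equivalently $q_1<\f{2d}{d-2s_c}$, which relocates a small amount of radial weight onto $u$ and opens a nonempty one-parameter window of admissible $q_1$ in every configuration $d\ge 4$, $0<s_c<\f12$, shrinking $\varepsilon$ further if necessary. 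Beyond this elementary parameter bookkeeping, the argument is a direct assembly of three applications of Lemma \ref{HLS} interlaced with H\"older in space and time.
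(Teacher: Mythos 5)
Your general strategy—reduce the $\mathcal N$-norm via Lemma \ref{HLS}, then split the product by H\"older and hit each factor with a second radial Sobolev estimate—is the right philosophy and in spirit is close to what the paper does in its Case II, sub-case $0<s_c<\tfrac{d-2}{2(d-1)}$. But your specific choice in step (i) is too aggressive and creates a genuine gap for small $s_c$, in every dimension $d\geq 4$.

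Concretely, fixing $p=q=2$ in step (i) forces $\alpha=s_c-1$ and converts the $\mathcal{N}$-norm to $\||x|^{1-s_c}F\|_{L^2_x}$. From that point on, the HLS scalings in step (iii) give $\alpha_1=\tfrac{d}{2}-s_c-\tfrac{d}{q_1}$, $\alpha_2=\tfrac{d-2}{2}-s_c-\tfrac{d}{q_2}$, and the $u$-range condition restricts $\tfrac{1}{q_1}\in[\tfrac12-s_c,\tfrac12]$, while the $v$-range condition restricts $\tfrac{1}{q_2}\in[\tfrac{d-2}{2d}-s_c,\tfrac{d-2}{2d}]$ (when $s_c<\tfrac{d-2}{2d}$). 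With the H\"older identity $\tfrac{a}{q_1}+\tfrac{1}{q_2}=\tfrac12$, the achievable range of $\tfrac{1}{q_2}$ is $\bigl[\tfrac12-\tfrac{2}{d-2s_c},\ \tfrac12-\tfrac{2(1-2s_c)}{d-2s_c}\bigr]$. For these two intervals to overlap you need
\begin{equation*}
\tfrac12-\tfrac{2(1-2s_c)}{d-2s_c}\ \geq\ \tfrac{d-2}{2d}-s_c,\quad\text{equivalently}\quad \tfrac{1}{d}+s_c\ \geq\ \tfrac{2(1-2s_c)}{d-2s_c},
\end{equation*}
which fails as $s_c\to 0$: the left side tends to $\tfrac1d$, the right to $\tfrac2d$. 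For $d=4$ the window is empty for all $s_c\lesssim 0.14$; for $d=5$ it is empty for $s_c=0.1$; the failure persists in every dimension for small $s_c$. So the claim that "a nonempty one-parameter window of admissible $q_1$" exists "in every configuration $d\geq 4$, $0<s_c<\f12$" is false, and merely perturbing $q_1$ cannot repair it because the entire achievable interval for $\tfrac{1}{q_2}$ lies below the $v$-admissible interval. (Separately, your assertion $\alpha_1>0\Leftrightarrow q_1<\tfrac{2d}{d-2s_c}$ has the inequality reversed; $\alpha_1>0$ corresponds to $q_1>\tfrac{2d}{d-2s_c}$. But even with the sign corrected the window remains empty in the regime above.)

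The structural reason is that $p_0=2$ in step (i) places the maximal amount of weight, $|x|^{1-s_c}$, on $F$, so all the subsequent HLS applications must operate near their endpoints and there is no slack. The paper avoids this by taking $\alpha=0$ in the first reduction, which gives a weight-free Lebesgue target $\|F\|_{L_x^{\frac{2d}{d+2-2s_c}}}$ with $p_0<2$; this opens the window for $s_c<\tfrac{d-2}{2(d-1)}$. For larger $s_c$ the Lebesgue reduction no longer fits the HLS range, and the paper instead retains a small fractional derivative $|\nabla|^{\bar s}$ on $F$ and distributes it by the fractional product rule together with the H\"older-continuous chain rule (Lemma \ref{Holderd}), since $p<1$ in dimensions $d\ge 5$. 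In dimension $d=4$ one has $p>1$ and the paper uses the standard fractional chain rule (Lemma \ref{chain}) after reducing to $\||\nabla|^{s_c}F\|_{L_t^2L_x^{4/3}}$. To salvage your route, you would have to leave $\alpha$ free in step (i) (keeping $p_0<2$), re-derive the admissible windows as functions of $\alpha$, and verify nonemptiness across the full $(d,s_c)$ range—this is no longer the "elementary parameter bookkeeping" you describe, and for $s_c$ close to $\tfrac12$ it is not clear a weight-only reduction suffices without reintroducing a fractional derivative on $F$.
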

\begin{proof}
{\bf Case I $d=4$, $p=\f{2}{2-s_c}>1$}

 By \eqref{raso}, we obtain
 \begin{align}\label{nonla}
   \||x|^{\f{1+\varepsilon}{2}}u\|_{L_{t,x}^2}\lesssim \||\nabla|^{\f{1-\varepsilon}{2}}u\|_{L_t^2 L_x^{\f{4}{3}}}.
 \end{align}
 By the definition of $\mathcal{N}$, \eqref{nonla} implies
 \begin{align}\label{nonlb}
   \||u|^{\f{2}{2-s_c}}v\|_{\mathcal{N}}&\lesssim\||\nabla|^{s_c}(|u|^{\f{2}{2-s_c}}v)\|_{L_t^2 L_x^{\f{4}{3}}}.
 \end{align}
 Continuing from \eqref{nonlb}, by Lemma\ref{product},  Lemma \ref{chain} and \eqref{radso} we have
 \begin{align*}
   \text{RHS of} \eqref{nonlb} &\lesssim \||\nabla|^{s_c}|u|^{\f{2}{2-s_c}}\|_{L_t^\infty L_x^{\f{4}{2+s_c}}}\|v\|_{L_t^2 L_x^{\f{4}{1-s_c}}}+\||u|^{\f{2}{2-s_c}}\|_{L_t^\infty L_x^2}\||\nabla|^{s_c}v\|_{L_t^2 L_x^4}\\
    &\lesssim \||\nabla|^{s_c}u\|_{L_t^\infty L_x^2}^{\f{2}{2-s_c}}\||\nabla|^{s_c}v\|_{L_t^2 L_x^4}\\
    &\lesssim \||\nabla|^{s_c}u\|_{L_t^\infty L_x^2}^{\f{2}{2-s_c}}\|v\|_{\mathcal{S}}.
    \end{align*}

{\bf Case II: $d\geq 5$, $p=\f{4}{d-2s_c}<1$}

 If $0<s_c<\f{d-2}{2(d-1)}$,  by the definition of $\mathcal{N}$, \eqref{raso} implies
 \begin{align}\label{nonlbb}
  \||u|^{\f{4}{d-2s_c}}v\|_{\mathcal{N}}&\lesssim\||u|^{\f{4}{d-2s_c}}v\|_{L_t^2 L_x^{\f{2d}{d+2-2s_c}}}.
 \end{align}
 Continuing from \eqref{nonlbb}, by the H\"older inequality and \eqref{radso} we have
  \begin{align*}
  &\lesssim \|u\|_{L_t^\infty L_x^{\f{2d}{d-2s_c}}}^{\f{4}{d-2s_c}}\|v\|_{L_t^2 L_x^{\f{2d}{d-2(1+s_c)}}}\\
    &\lesssim \||\nabla|^{s_c}u\|_{L_t^\infty L_x^2}^{\f{4}{d-2s_c}}\|v\|_{\mathcal{S}}.
    \end{align*}
    When $\f{d-2}{2(d-1)}\leq s_c<\f{1}{2}$,  denoting $\bar{s}=(s_c-\f{d-2}{2(d-1)})+$, similarly by \eqref{raso} we have
    \begin{align}\label{nonlac}
      \||u|^{\f{4}{d-2s_c}}v\|_{\mathcal{N}}&\lesssim \||\nabla|^{\bar{s}}(|u|^{\f{4}{d-2s_c}}v)\|_{L_t^2 L_x^{\f{2d}{d+2-2(s_c-\bar{s})}}}.
    \end{align}
    Continuing from \eqref{nonlac}, by Lemma \ref{product} we have
    \begin{align*}
     &\quad\text{RHS of }\eqref{nonlac}\\
      &\lesssim \||\nabla|^{\bar s}|u|^{\f{4}{d-2s_c}}\|_{L_t^\infty L_x^{\f{d}{2+\bar s}}}  \|v\|_{L_t^2 L_x^{\f{2d}{d-2-2s_c}}}+\||u|^{\f{4}{d-2s_c}}\|_{L_t^\infty L_x^{\f{d}{2}}}  \||\nabla|^{\bar{s}}v\|_{L_t^2 L_x^{\f{2d}{d-2-2(s_c-\bar s)}}}\\
      &\lesssim \||\nabla|^{\bar s}|u|^{\f{4}{d-2s_c}}\|_{L_t^\infty L_x^{\f{d}{2+\bar s}}}  \||\nabla|^{s_c}v\|_{L_t^2 L_x^{\f{2d}{d-2}}}+\||\nabla|^{s_c}u\|_{L_t^\infty L_x^2}^{\f{4}{d-2s_c}}\||\nabla|^{s_c}v\|_{L_t^2 L_x^{\f{2d}{d-2}}}.
      \end{align*}
 To complete the proof, it suffices to show that
 \begin{align}\label{Hode}
    \||\nabla|^{\bar s}|u|^{\f{4}{d-2s_c}}\|_{L_t^\infty L_x^{\f{d}{2+\bar s}}}\lesssim \||\nabla|^{s_c}u\|_{L_t^\infty L_x^2}^{\f{4}{d-2s_c}}.
 \end{align}
  To this end, setting $\sigma=\f{\bar s}{p}+\tilde \varepsilon$,  where $\tilde \varepsilon$ is a sufficiently small positive constant(say,  $\tilde \varepsilon=\f{1}{2^{10}}$ ). Using Lemma \ref{Holderd} with $\alpha$ being replaced by $\f{4}{d-2s_c}$,  we have
\begin{align}
  \||\nabla|^{\bar s}|u|^{\f{4}{d-2s_c}}\|_{L_t^\infty L_x^{\f{d}{2+\bar s}}} \lesssim \||u|^{p-\f{\bar{s}}{\sigma}}\|_{L_t^\infty L_x^{\f{2d\sigma}{(d-2s_c)p\tilde \varepsilon}}}\||\nabla|^{\sigma}u\|_{L_t^\infty L_x^{\bar p}}^{\f{\bar s}{\sigma}},
\end{align}
  where $\bar p=\f{2d\bar s}{2(2+\bar s)\sigma-(d-2s_c)p\tilde\varepsilon}$, using Sobolev inequality we have \eqref{Hode}.
\end{proof}

By the local well-posed theory, for example see \cite{Ca}, one has
\begin{align}\label{localstr}
  \||\nabla|^{s_c}u\|_{L_t^2 L_x^{\f{2d}{d-2}}(J\times \R^d)}<\infty
\end{align}
for any compact interval $J$  contained in the the maximal lifespan interval  $I$. As a direct application of \eqref{nonesti}, we obtain the following result which, in some sense,  can be viewed as an extension of  \eqref{localstr} in the weighted norm.
\begin{corollary}\label{localfin}
  Let $u:I\times \R^d \rightarrow \mathbb{C}$ be a  spherically symmetric maximal-lifespan  solution to \eqref{sch0} then
  \begin{align*}
    \|u\|_{\mathcal{S}(J\times \R^d)}<\infty, \quad \text{for all compact set}\quad J\subset\subset I.
  \end{align*}
\end{corollary}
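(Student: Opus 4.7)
The plan is to apply the weighted Strichartz estimate \eqref{weightstr} to the Duhamel formula on $J$ and to bound the nonlinearity $|u|^p u$ in $\mathcal{N}(J)$ by revisiting the proof leading to \eqref{nonesti}. Fix a compact $J\subset\subset I$ and any $t_0\in J$. Since the definition of strong solution guarantees $u\in C_t\dot{H}^{s_c}_x(J\times\R^d)$, the quantity $E:=\|u\|_{L_t^\infty\dot{H}^{s_c}_x(J)}$ is finite; moreover, \eqref{localstr} gives the finiteness of $M:=\||\nabla|^{s_c}u\|_{L_t^2 L_x^{\f{2d}{d-2}}(J)}$.

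Applying \eqref{weightstr} to the equation $(i\partial_t+\Delta)u=\mu|u|^p u$ on $J$ yields
\begin{align*}
\|u\|_{\mathcal{S}(J)}\lesssim\|u(t_0)\|_{\dot{H}^{s_c}}+\||u|^p u\|_{\mathcal{N}(J)}\leq E+\||u|^p u\|_{\mathcal{N}(J)}.
\end{align*}
Using \eqref{nonesti} directly with $v=u$ would produce $\|u\|_{\mathcal{S}(J)}$ on the right-hand side, which cannot be absorbed without smallness. The remedy is to look one line earlier in the proof of \eqref{nonesti}: in each of the three regimes treated there (Case I in $d=4$, and the two sub-cases of Case II for $d\geq 5$), the final step uses the radial Sobolev embedding \eqref{radso} to pass from $\||\nabla|^{s_c}v\|_{L_t^2 L_x^{\f{2d}{d-2}}}$ to $\|v\|_{\mathcal{S}}$. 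Stopping one step before yields the sharper intermediate bound
\begin{align*}
\||u|^p v\|_{\mathcal{N}(J)}\lesssim \||\nabla|^{s_c}u\|_{L_t^\infty L_x^2(J)}^{p}\cdot\||\nabla|^{s_c}v\|_{L_t^2 L_x^{\f{2d}{d-2}}(J)},
\end{align*}
whose right-hand side does not involve the $\mathcal{S}$-norm. Specialising to $v=u$ gives $\||u|^p u\|_{\mathcal{N}(J)}\lesssim E^{p}M$, so that combining with the Strichartz bound above produces $\|u\|_{\mathcal{S}(J)}\lesssim E+E^{p}M<\infty$, as required.

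No serious obstacle is anticipated, since the nontrivial work has already been carried out in the proof of \eqref{nonesti}: the corollary is essentially the remark that its intermediate step, coupled with the classical local Strichartz bound \eqref{localstr} and the $L_t^\infty\dot{H}^{s_c}_x$-continuity of strong solutions, is enough for local-in-time finiteness of the weighted norm. If one prefers to use \eqref{nonesti} as a black box, an equivalent route is to partition $J$ into finitely many subintervals on which $\||\nabla|^{s_c}u\|_{L_t^2 L_x^{\f{2d}{d-2}}}$ is as small as desired (possible thanks to \eqref{localstr} and the absolute continuity of the integral), and then to run a standard continuity/bootstrap argument on each piece to absorb the produced $\|u\|_{\mathcal{S}}$ factor, iterating finitely many times to cover all of $J$.
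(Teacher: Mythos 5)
Your proof is correct and follows essentially the same route as the paper: apply the weighted Strichartz estimate \eqref{weightstr}, then bound $\||u|^p u\|_{\mathcal{N}(J)}$ by $\||\nabla|^{s_c}u\|_{L_t^\infty L_x^2}^{p}\||\nabla|^{s_c}u\|_{L_t^2 L_x^{\f{2d}{d-2}}}$, which is finite on a compact $J$ by \eqref{assum} (or, as you note, by $C_t\dot H^{s_c}_x$-continuity of strong solutions) together with \eqref{localstr}. One small remark: you did not need to ``look one line earlier in the proof'' of \eqref{nonesti} — that intermediate bound is already displayed as the middle quantity in the two-sided chain \eqref{nonesti}, so it can be read off directly.
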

\begin{proof}
  Using \eqref{assum}, \eqref{weightstr} \eqref{nonesti}and \eqref{localstr}, we obtain
  \begin{align*}
    \|u\|_{\mathcal{S}(J\times \R^d)}&\lesssim  1+\||u|^{\f{4}{d-2s_c}}u\|_{\mathcal{N}}\\
    &\lesssim \||\nabla|^{s_c}u\|_{L_t^\infty L_x^2}^{\f{4}{d-2s_c}}\||\nabla|^{s_c}u\|_{L_t^2 L_x^{\f{2d}{d-2}}}\\
    &<\infty.
  \end{align*}
\end{proof}
Next, we will give some refined nonlinear estimates which will be used to control the nonlinear interaction.
\begin{proposition}[Refined nonlinear estimate]
Let $u,v: I\times \R^d \rightarrow \mathbb{C}$ be spherically symmetric, then we have
\begin{align}\label{refine1}
  \||\nabla|^{\f{1-\varepsilon}{2}-s_c}O(|u|^{\f{4}{d-2s_c}}v)\|_{\mathcal{N}} \lesssim \||\nabla|^{(\f{d}{2}-\f{(d-2s_c)(1+s_c+\varepsilon)}{4})-}u\|_{L_t^\infty L_x^2}^{\f{4}{d-2s_c}}\||\nabla|^{(-\f{1-\varepsilon}{2})+}v\|_{\mathcal{S}},
  \end{align}
  \begin{align}\label{refine2}
     \||\nabla|^{\f{1-\varepsilon}{2}-s_c}O(|u|^{\f{4}{d-2s_c}}v)\|_{\mathcal{N}} \lesssim \||\nabla|^{\f{1+\varepsilon}{2}} u\|_{L_t^\infty L_x^2}^{\f{4}{d-2s_c}}\||\nabla|^{\f{1-\varepsilon}{2}-s_c+p(s_c-\f{1+\varepsilon}{2})}v\|_{\mathcal{S}}.
  \end{align}
  \begin{proof}
  By the definition of $\mathcal{N}$ and the H\"older inequality and Lemma \ref{HLS}, we estimate \eqref{refine1} as
  \begin{align*}
   &\quad \||\nabla|^{\f{1-\varepsilon}{2}-s_c}O(|u|^{\f{4}{d-2s_c}}v)\|_{\mathcal{N}} \\
    &\lesssim \||x|^{\f{1+\varepsilon}{2}}O(|u|^{\f{4}{d-2s_c}}v)\|_{L_{t,x}^2}\\
    &\lesssim \||x|^{1+\varepsilon+s_c}|u|^{\f{4}{d-2s_c}}\|_{L_t^\infty L_x^{\infty-}}\||x|^{-\f{1+\varepsilon}{2}-s_c}v\|_{L_t^2 L_x^{2+}}\\
    &\lesssim \||\nabla|^{(\f{d}{2}-\f{(d-2s_c)(1+s_c+\varepsilon)}{4})-}u\|_{L_t^\infty L_x^2}^{\f{4}{d-2s_c}}\||\nabla|^{(-\f{1-\varepsilon}{2})+}v\|_{\mathcal{S}}.
    \end{align*}
     Similarly for \eqref{refine2}, we have
    \begin{align*}
     &\quad  \||\nabla|^{\f{1-\varepsilon}{2}-s_c}O(|u|^{\f{4}{d-2s_c}}v)\|_{\mathcal{N}} \\
    &\lesssim \||x|^{\f{1+\varepsilon}{2}}O(|u|^{\f{4}{d-2s_c}}v)\|_{L_{t,x}^2}\\
    &\lesssim \|u\|_{L_t^\infty L_x^{\f{2d}{(d-1-\varepsilon))}}}^{\f{4}{d-2s_c}}\||x|^{\f{1+\varepsilon}{2}}v\|_{L_t^2 L_x^{\f{2d}{d-(d-1-\varepsilon)p}}}\\
    &\lesssim  \||\nabla|^{\f{1+\varepsilon}{2}} u\|_{L_t^\infty L_x^2}^{\f{4}{d-2s_c}}\||\nabla|^{\f{1-\varepsilon}{2}-s_c+p(s_c-\f{1+\varepsilon}{2})}v\|_{\mathcal{S}}.
    \end{align*}
  \end{proof}
 \end{proposition}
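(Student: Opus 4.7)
The plan is to first unfold the $\mathcal{N}$ norm: the operator $|\nabla|^{\f{1-\varepsilon}{2}-s_c}$ placed outside cancels exactly the $|\nabla|^{-\f{1-\varepsilon}{2}+s_c}$ built into $\mathcal{N}$, so that
\[
\bigl\||\nabla|^{\f{1-\varepsilon}{2}-s_c}F\bigr\|_{\mathcal{N}} = \bigl\||x|^{\f{1+\varepsilon}{2}}F\bigr\|_{L^2_{t,x}}.
\]
With $p=\f{4}{d-2s_c}$, the task reduces in both cases to estimating $\bigl\||x|^{\f{1+\varepsilon}{2}}|u|^{p}v\bigr\|_{L^2_{t,x}}$. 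The main tool will be a H\"older split in space, after which each factor is converted into a Sobolev-type norm by the radial weighted Hardy--Littlewood--Sobolev inequality of Lemma \ref{HLS}; radiality of $u$ and $v$ is what makes these weighted embeddings available.

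For \eqref{refine1} I would load an extra $|x|^{s_c}$ weight onto the $|u|^p$ factor and give $v$ the compensating negative weight, writing
\[
\bigl\||x|^{\f{1+\varepsilon}{2}}|u|^{p}v\bigr\|_{L^2_{t,x}} \lesssim \bigl\||x|^{1+\varepsilon+s_c}|u|^{p}\bigr\|_{L^\infty_t L^{\infty-}_x}\bigl\||x|^{-\f{1+\varepsilon}{2}-s_c}v\bigr\|_{L^2_t L^{2+}_x}.
\]
Treating the first factor as the $p$-th power of $\||x|^{(1+\varepsilon+s_c)/p}u\|_{L^{\infty-}_x}$, Lemma \ref{HLS} applied from base $L^2_x$ produces precisely the derivative order $\bigl(\f{d}{2}-\f{(d-2s_c)(1+s_c+\varepsilon)}{4}\bigr)-$ on $u$; the second factor, by another application of Lemma \ref{HLS}, is bounded by $\bigl\||x|^{-\f{1+\varepsilon}{2}}|\nabla|^{s_c+}v\bigr\|_{L^2_{t,x}}$, which is in turn absorbed into $\bigl\||\nabla|^{(-\f{1-\varepsilon}{2})+}v\bigr\|_{\mathcal{S}}$. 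For \eqref{refine2}, I would instead place the full weight $|x|^{(1+\varepsilon)/2}$ on $v$ and leave $u$ unweighted,
\[
\bigl\||x|^{\f{1+\varepsilon}{2}}|u|^{p}v\bigr\|_{L^2_{t,x}} \lesssim \|u\|_{L^\infty_t L^{\f{2d}{d-1-\varepsilon}}_x}^{p}\bigl\||x|^{\f{1+\varepsilon}{2}}v\bigr\|_{L^2_t L^{\f{2d}{d-(d-1-\varepsilon)p}}_x}.
\]
The $u$ factor is then handled by the ordinary Sobolev embedding $\dot H^{(1+\varepsilon)/2}\hookrightarrow L^{2d/(d-1-\varepsilon)}$, while the $v$ factor, via Lemma \ref{HLS} with $\alpha=\beta=\f{1+\varepsilon}{2}$, becomes the $\mathcal{S}$-norm of a fractional derivative of $v$; the scaling identity $s_c=\f{d}{2}-\f{2}{p}$ then pins the derivative order to exactly $\f{1-\varepsilon}{2}-s_c+p(s_c-\f{1+\varepsilon}{2})$.

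The main obstacle is to verify that each H\"older split falls within the admissible range of Lemma \ref{HLS}: one must simultaneously check $\alpha>-\f{d}{p'}$, $\beta>-\f{d}{q'}$, the summability bound $1\leq\f{1}{p}+\f{1}{q}\leq 1+s$, and keep every Lebesgue exponent strictly inside $(1,\infty)$. Avoiding the endpoints $L^\infty_x$ and $L^2_x$ is exactly what forces the ``$\infty-$'' and ``$2+$'' perturbations in the first split and the $\varepsilon$-loss in the Lebesgue exponents of the second; these are accommodated by taking $\varepsilon>0$ sufficiently small in terms of $d\geq 4$ and $s_c\in(0,\f{1}{2})$. Once the admissibility windows are confirmed, the estimates reduce to a routine bookkeeping of scaling exponents, and no further cancellation is required.
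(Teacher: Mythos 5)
Your proposal is correct and follows essentially the same route as the paper: unfold $\mathcal{N}$ so that the prefixed $|\nabla|^{\f{1-\varepsilon}{2}-s_c}$ cancels the derivative weight in the norm, perform the same two H\"older splits of the power weight, and then convert each factor via Lemma~\ref{HLS} (and, for the $u$-factor in \eqref{refine2}, ordinary Sobolev embedding). The paper is terser about the HLS bookkeeping, but you have simply filled in the scaling computation it leaves implicit, and your remarks on why the $\infty-$, $2+$ and $\varepsilon$-perturbations appear match the role they play in the paper.
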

 \begin{remark}
    \eqref{refine1}  is very useful  when $u$ is low frequency and $v$ is high frequency, as it transfers plenty of derivatives from high frequency to low frequency via the appropriate distribution of weight.
 \end{remark}
\section{Frequency-localized  Morawetz estimate}\label{frequency}
In this part we will primarily establish the following frequency-localized Morawetz inequality.
\begin{proposition}[Frequency-localized Morawetz estimate]\label{Morawetz1}
    Let $d \geq 4$ and  $u:I\times \mathbb{R}^d  \rightarrow \mathbb{C}$ be the critical spherically symmetric maximal-lifespan solution to \eqref{sch0} which  obeys \eqref{assum}, \eqref{lowfre1}, then we have
    \begin{align}
      \lim_{N\rightarrow \infty}N^{2s_c}\int_{I}\int_{\mathbb{R}^d}\f{|\nabla u_{<N}(t,x)|^2}{|Nx|^{1+\varepsilon}}dxdt=0.
    \end{align}
    \end{proposition}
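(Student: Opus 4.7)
I would prove this by a frequency-localized Lin--Strauss Morawetz estimate, with the nonlinear commutator error absorbed via the weighted Strichartz machinery developed in Section \ref{further}. First apply $P_{<N}$ to \eqref{sch0} to obtain $(i\partial_t+\Delta)u_{<N}=P_{<N}(|u|^p u)$, and introduce the Morawetz action
$$M_N(t)\;=\;2\operatorname{Im}\int_{\mathbb{R}^d} a(x)\cdot\nabla u_{<N}(t,x)\,\overline{u_{<N}(t,x)}\,dx,$$
where the vector multiplier $a$ is a smoothly truncated version of $x|x|^{-(1+\varepsilon)}$ engineered so that $\partial_j a_k$ and the subleading $\Delta^2 a$ terms produce the weight $|x|^{-(1+\varepsilon)}$ paired against $|\nabla u_{<N}|^2$. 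Differentiating $M_N(t)$ in time using the equation, integrating by parts, and discarding the favorably-signed potential contribution $\int |u_{<N}|^{p+2}/|x|^{1+\varepsilon}$ (the equation is defocusing), one obtains, schematically,
$$\int_I\int_{\mathbb{R}^d}\frac{|\nabla u_{<N}(t,x)|^2}{|x|^{1+\varepsilon}}\,dx\,dt\;\lesssim\;\sup_{t\in I}|M_N(t)|\;+\;|E_N(I)|,$$
where $E_N(I)$ collects the commutator error arising because $P_{<N}$ does not commute with $|u|^p u$.

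The second stage is to rescale and extract smallness. By Cauchy--Schwarz, Hardy's inequality and Bernstein applied at a single time, together with $u\in L^\infty_t\dot H^{s_c}_x$, one gets $\sup_{t\in I}|M_N(t)|\lesssim N^{1+\varepsilon-2s_c}$, so after multiplying through by $N^{2s_c-1-\varepsilon}$ the action contribution to the rescaled left-hand side is $O(1)$; refining the bound via \eqref{almost3} --- legitimate because \eqref{lowfre1} gives $N(t)\leq 1$ while we take $N\to\infty$, so the relevant low-frequency content of $u$ is $o(1)$ --- upgrades this contribution to $o(1)$. For the error, decompose $P_{<N}(|u|^p u)-|u_{<N}|^p u_{<N}$ via paraproducts into a finite sum of multilinear pieces each containing at least one factor of $u_{\geq cN}$ for some small $c>0$, and estimate each piece through the weighted Strichartz inequality \eqref{weightstr} combined with the refined nonlinear estimates \eqref{refine1} and \eqref{refine2}; these let me dump most derivatives onto a low-frequency factor controlled by $\|u\|_{L^\infty_t\dot H^{s_c}_x}$ while peeling off a small factor $\||\nabla|^{s_c}u_{\geq cN}\|_{L^\infty_t L^2_x}$, which by the compactness property \eqref{almost1} tends to $0$ as $N\to\infty$ (again using $N(t)\leq 1$). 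Multiplying the main inequality by $N^{2s_c}N^{-(1+\varepsilon)}$ identifies its left-hand side with the quantity of interest, and both the action and the error contribute $o(1)$.

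The main obstacle is the error estimate. In dimensions $d\geq 5$ one has $p=4/(d-2s_c)<1$, so the nonlinearity is only H\"older continuous; standard fractional Leibniz rules must be replaced by the H\"older chain rule (Lemma \ref{Holderd}), and the Littlewood--Paley splitting of $P_{<N}(|u|^p u)-|u_{<N}|^p u_{<N}$ produces mixed high--low pieces whose derivatives are naturally placed on the wrong factor. The refined inequalities \eqref{refine1}, \eqref{refine2} are the decisive tool: by placing $|x|^{(1\pm\varepsilon)/2}$ at the same scaling level as the fractional derivative, they trade the subquadratic smoothing deficit for scaling-invariant smallness of the high-frequency tail, converting the qualitative compactness \eqref{almost1} into a quantitative decay of $E_N$. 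Careful bookkeeping --- tracking which factor receives the derivative, which the weight, and which the small high-frequency factor --- is where essentially all of the technical work resides.
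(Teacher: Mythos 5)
Your overall plan matches the paper's: project the equation onto low frequencies, run a Lin--Strauss-type Morawetz argument with a truncated $x|x|^{-(1+\varepsilon)}$ multiplier, and beat down the commutator error $G := P_{<N}F(u)-F(u_{<N})$ using the weighted Strichartz machinery, the refined estimates \eqref{refine1}--\eqref{refine2}, and the high-frequency decay coming from compactness. The paper implements exactly this (Lemma \ref{decayhi}, Lemma 4.6, and Proposition \ref{maro2}), so the ingredients are correct.

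However, there is a genuine gap in the closing step. You assert that, after multiplying by $N^{2s_c-1-\varepsilon}$, ``both the action and the error contribute $o(1)$,'' treating them as if they decouple from the left-hand side. They do not. The error term is $\int_I\int |G|\bigl(|\nabla u_{<N}|+|u_{<N}|/\langle x\rangle\bigr)\,dx\,dt$, and the factor $|\nabla u_{<N}|$ has to be placed in some $L^2_t$-based weighted norm. A priori, that spacetime norm is \emph{not} controlled by $u\in L^\infty_t\dot H^{s_c}_x$ alone: it is, up to Bernstein and radial Sobolev embedding, essentially $Q_I^{1/2}$, the square root of the very quantity you are trying to bound. (Likewise, proving the high-frequency smallness $\|u_{hi}\|_{\mathcal{S}}\lesssim\delta$ already requires an $L^2_t$-control on $\nabla u_{lo}$, see the first term of \eqref{higha}, which again feeds back into $Q_I$.) Without acknowledging this feedback loop the argument simply does not close; you would end up with an inequality of the form $Q_I\lesssim \delta^2 + \delta^{c}\,Q_I^{1/2}$ rather than $Q_I\lesssim o(1)$. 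The paper resolves this by reformulating the claim as a bootstrap statement (Proposition \ref{maro2}): under the hypothesis $Q_I\leq 2\eta$ one first derives \eqref{maro21}, \eqref{maro23}, \eqref{highes}, then shows $\|G\|\lesssim_\eta\delta^c$, and finally concludes $Q_I\leq\eta$, with a continuity argument (using Corollary \ref{localfin}) to justify the bootstrap. Your write-up omits this essential structural step.

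Two smaller points. First, for the $o(1)$-ness of the boundary/action term you cite \eqref{almost3}, but the relevant ingredient is Lemma \ref{decayhi}, which combines the high-frequency compactness \eqref{almost1} (for the ``medium'' frequencies $\sqrt N\le\cdot<N$) with the a priori boundedness $\|u\|_{L^\infty_t\dot H^{s_c}_x}$ and Bernstein (for the remaining piece $u_{<\sqrt N}$); \eqref{almost3} concerns the opposite (very low) end of the spectrum and is not what is used here. Second, after the rescaling to $N=1$, the boundary term in the paper's Morawetz inequality (Lemma 4.6) is $\sup_t\||\nabla|^{1/2}u_{lo}\|_{L^2}^2$, and its smallness comes directly from the rescaled hypothesis \eqref{maro3} with $\theta=\tfrac12-s_c$, rather than from the Cauchy--Schwarz/Hardy bound $\sup_t|M_N(t)|\lesssim N^{1+\varepsilon-2s_c}$ you propose, which by itself only gives $O(1)$ and has to be refined anyway.
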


To prove Proposition \ref{Morawetz1}, we will  first exploit some nontrivial facts about the critical solution $u$.
\begin{lemma} \label{decayhi}
  Let $u:I\times \R^d \rightarrow \mathbb{C}$ be the critical spherically symmetric  maximal-lifespan solution to \eqref{sch0} which  obeys \eqref{assum}, \eqref{lowfre1}. Then for each $\theta>0$, we have
  \begin{align}
    \lim_{N \rightarrow \infty}\left(\||\nabla|^{s_c}u_{\geq N}\|_{L_t^\infty L_x^2(I\times \R^d)}+\f{1}{N^{\theta}}\||\nabla|^{\theta+s_c}u_{<N}\|_{L_t^\infty L_x^2(I\times \R^d)}\right)=0.
  \end{align}
\end{lemma}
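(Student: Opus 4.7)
The proof falls naturally into two pieces, one for each summand. Both rely only on the frequency-side compactness \eqref{almost1}, the uniform bound $N(t)\leq 1$ from \eqref{lowfre1}, Bernstein, and the hypothesis \eqref{assum}.

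For the first summand, the plan is to use compactness directly. Fix $\eta>0$ and let $C(\eta)$ be the compactness modulus function provided by \eqref{almost1}. Because $N(t)\leq 1$ for every $t\in I$, whenever $N\geq C(\eta)$ we have the inclusion $\{|\xi|\geq N\}\subset\{|\xi|\geq C(\eta)N(t)\}$ for every $t$, so that
\begin{align*}
\||\nabla|^{s_c}u_{\geq N}(t)\|_{L^2_x}^2\lesssim \int_{|\xi|\gtrsim N}|\xi|^{2s_c}|\hat u(t,\xi)|^2\,d\xi\leq \int_{|\xi|\geq C(\eta)N(t)}|\xi|^{2s_c}|\hat u(t,\xi)|^2\,d\xi<\eta,
\end{align*}
uniformly in $t\in I$. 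Taking the sup in $t$ and then $N\to\infty$ (after letting $\eta\to 0$) yields $\||\nabla|^{s_c}u_{\geq N}\|_{L^\infty_t L^2_x}\to 0$.

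For the second summand the plan is a two-scale frequency decomposition. Given $\theta>0$ and $\eta>0$, pick $M=M(\eta)$ large enough that $\||\nabla|^{s_c}u_{\geq M}\|_{L^\infty_t L^2_x}<\eta$ (possible by the previous step). Split $u_{<N}=u_{<M}+u_{M\leq\,\cdot\,<N}$ for $N>M$. By Bernstein's inequality applied at the frequency scale $N$,
\begin{align*}
\tfrac{1}{N^\theta}\||\nabla|^{\theta+s_c}u_{M\leq\,\cdot\,<N}\|_{L^\infty_t L^2_x}\lesssim \||\nabla|^{s_c}u_{\geq M}\|_{L^\infty_t L^2_x}<\eta,
\end{align*}
while Bernstein at scale $M$, together with the a priori bound \eqref{assum}, gives
\begin{align*}
\tfrac{1}{N^\theta}\||\nabla|^{\theta+s_c}u_{<M}\|_{L^\infty_t L^2_x}\lesssim \Bigl(\tfrac{M}{N}\Bigr)^{\!\theta}\||\nabla|^{s_c}u\|_{L^\infty_t L^2_x}\lesssim_A \Bigl(\tfrac{M}{N}\Bigr)^{\!\theta}.
\end{align*}
Sending $N\to\infty$ with $M$ fixed makes the second term vanish, leaving $\limsup_N\leq C\eta$; then $\eta\to 0$ finishes the proof.

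There is no real obstacle here: the argument is a standard compactness-plus-Bernstein decomposition, and the two hypotheses \eqref{almost1} and \eqref{lowfre1} fit together exactly so that the frequency threshold $C(\eta)N(t)$ is dominated by the fixed scale $C(\eta)$. The only thing to watch is that the compactness inequality \eqref{almost1} is stated on raw Fourier support while $u_{\geq N}$ uses the smooth Littlewood--Paley cutoff, but this causes only a harmless factor in passing between $\{|\xi|\geq N\}$ and $\operatorname{supp}\widehat{P_{\geq N}}$.
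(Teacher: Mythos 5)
Your proof is correct and follows essentially the same route as the paper: the first summand is handled directly by frequency compactness \eqref{almost1} together with $N(t)\leq 1$, and the second by a Bernstein split of $u_{<N}$ into a low-low and a mid piece. The only cosmetic difference is that the paper takes the intermediate cutoff to be $\sqrt{N}$ (so the two contributions vanish simultaneously as $N\to\infty$), whereas you fix a threshold $M=M(\eta)$ and run a nested $N\to\infty$, $\eta\to 0$ limit; these are interchangeable.
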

\begin{proof}
  By \eqref{almost1} and \eqref{lowfre1}, we have that
  \begin{align*}
    \lim_{N\rightarrow \infty}\||\nabla|^{s_c}u_{>N}\|_{L_t^\infty L_x^2}=0.
  \end{align*}
  Now we turn to  proving the second term, we split $u_{<N}$ as $u_{<N}:=u_{<\sqrt{N}}+u_{\sqrt{N}\leq.<N}$  then by Bernstein inequality, we have
  \begin{align*}
    &\f{1}{N^{\theta}}\||\nabla|^{\theta+s_c}u_{<N}\|_{L_t^\infty L_x^2}\\
    \lesssim &  \f{1}{N^{\theta}}\||\nabla|^{\theta+s_c}u_{<\sqrt{N}}\|_{L_t^\infty L_x^2}+\f{1}{N^{\theta}}\||\nabla|^{\theta+s_c}u_{\sqrt{N}\leq .< N}\|_{L_t^\infty L_x^2}\\
    \lesssim &\f{1}{N^{\f{\theta}{2}}}\||\nabla|^{s_c}u_{<\sqrt{N}}\|_{L_t^\infty L_x^2}+\||\nabla|^{s_c}u_{\sqrt{N}\leq .< N}\|_{L_t^\infty L_x^2}\\
    &\rightarrow 0 \quad \text {as} \quad N\rightarrow \infty.
  \end{align*}
  \end{proof}
In view of this Lemma \ref{decayhi}, we can reformulate Proposition \ref{Morawetz1} as follows
  \begin{theorem}[Frequency-localized Morawetz estimate $\uppercase\expandafter{\romannumeral 1}$]
    Let $d\geq 4,0<\eta<1, $ and $u: I\times \R^d$ be the critical spherically symmetric  maximal-lifespan solution to \eqref{sch0} which satisfies  \eqref{assum},\eqref{lowfre1}.  Then there exits $\delta>0$ with the following property: given any $N>0$ such that
    \begin{align}\label{Nassum}
      \||\nabla|^{s_c}u_{\geq N}\|_{L_t^\infty L_x^2(I\times \R^d)}+\f{1}{N^{\theta}}\||\nabla|^{\theta+s_c} u_{<N}\|_{L_t^\infty L_x^2(I\times \mathbb{R}^d)}\leq \delta,
    \end{align}
    we have
    \begin{align}\label{maro1a}
     N^{2s_c}\int_{I}\int_{\R^d}\f{|\nabla u_{<N}(t,x)|^2}{|Nx|^{1+\varepsilon}}dxdt \leq \eta.
    \end{align}
  \end{theorem}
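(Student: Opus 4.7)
The plan is to adapt the classical frequency-localized Morawetz argument to the present setting, exploiting the weighted Strichartz framework of Section \ref{further} so as to handle the subquadratic nonlinearity (since $p=\tfrac{4}{d-2s_c}<1$ for $d\ge 5$). The first step is to introduce a radial Morawetz functional of the form
$$M_N(t):=2\,\mathrm{Im}\int_{\mathbb{R}^d}\bar u_{<N}(t,x)\,b_N(x)\cdot\nabla u_{<N}(t,x)\,dx,$$
where $b_N$ is a smooth radial vector field modelled on $N\,\nabla_x(|Nx|^{1-\varepsilon})$ (truncated near the origin) and scaled in $N$ so that Cauchy--Schwarz combined with the Hardy--Sobolev inequality (Lemma \ref{HLS}) yields the critically-invariant a priori bound $\sup_{t\in I}|M_N(t)|\lesssim\||\nabla|^{s_c}u\|_{L^\infty_tL^2_x}^2\lesssim 1$.

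Next, differentiating $M_N$ in time using $(i\partial_t+\Delta)u_{<N}=P_{<N}F(u)$ with $F(u)=|u|^{p}u$ and integrating by parts yields, schematically,
$$\tfrac{d}{dt}M_N(t)=\mathcal{P}(t)+\mathcal{R}(t)+\mathcal{E}(t),$$
in which $\mathcal{P}(t)\gtrsim N^{2s_c}\!\int_{\mathbb{R}^d}|\nabla u_{<N}|^2/|Nx|^{1+\varepsilon}\,dx$ is the desired positive principal term (arising from the Hessian and bi-Laplacian of the underlying potential), $\mathcal{R}(t)$ is a lower-order remainder absorbed via Hardy, and $\mathcal{E}(t)$ collects the nonlinear contributions including the commutator $[P_{<N},F(u)]$. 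Decomposing $u=u_{<N}+u_{\ge N}$ inside $F(u)$, the pure-low piece cancels against the natural Morawetz identity for the truncated equation up to controllable remainders, while every high--low interaction piece is bounded via the refined nonlinear estimates of Section \ref{further}: \eqref{refine1} handles those terms with a high-frequency factor sitting in $|u|^{p}$ (transferring derivatives from $u_{\ge N}$ to $u_{<N}$ via the weighted pairing), and \eqref{refine2} handles the dual configuration. Each resulting term gains a factor of either $\||\nabla|^{s_c}u_{\ge N}\|_{L^\infty_tL^2_x}$ or $N^{-\theta}\||\nabla|^{\theta+s_c}u_{<N}\|_{L^\infty_tL^2_x}$, hence of $\delta$ by \eqref{Nassum}, multiplied by a power of $\|u\|_{\mathcal{S}(I\times\mathbb{R}^d)}$, which is locally finite by Corollary \ref{localfin}. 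Integrating in $t$ over $I$ and using the a priori bound on $M_N$ produces an inequality of the form $\mathrm{LHS}\le C(1+\delta^{\kappa}\cdot\mathrm{LHS})$ for some $\kappa>0$, and \eqref{maro1a} follows by a standard bootstrap once $\delta$ is chosen sufficiently small in terms of $\eta$.

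The principal obstacle is the commutator $[P_{<N},F(u)]$: because $p<1$ when $d\ge 5$, the nonlinearity is only H\"older continuous and the usual Coifman--Meyer paraproduct-commutator bounds are unavailable. The purpose of the weighted Strichartz framework introduced in Section \ref{further} is precisely to bypass this --- placing the spatial weight $|x|^{(1+\varepsilon)/2}$ and the derivative $|\nabla|^{(1-\varepsilon)/2}$ at the same scaling height converts the would-be commutator into an honest product of weighted Lebesgue norms via \eqref{refine1}--\eqref{refine2}. The delicate point that remains is the careful bookkeeping of $N$-powers in every term of $\mathcal{E}(t)$, so that a genuine smallness factor $\delta^{\kappa}$ (rather than merely a bounded constant) is left behind after all the derivatives and weights have been redistributed.
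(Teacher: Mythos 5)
Your overall strategy --- a radial Morawetz identity for the frequency-truncated solution, with the nonlinear/commutator error controlled via the weighted Strichartz framework and \eqref{refine1}--\eqref{refine2} --- is the same as the paper's. (The paper packages the Morawetz identity as a cited black box, Lemma 4.7, applied to $\phi=u_{lo}$ with source $G=P_{lo}F(u)-F(u_{lo})$, and first reduces to $N=1$ and $I$ compact by scaling and a limiting argument, but that is an organizational difference, not a conceptual one.)

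There is, however, a genuine gap in how you close the argument. You estimate the boundary contribution by
$\sup_{t\in I}|M_N(t)|\lesssim\||\nabla|^{s_c}u\|_{L^\infty_tL^2_x}^2\lesssim 1$,
and then claim the resulting inequality $\mathrm{LHS}\le C(1+\delta^\kappa\,\mathrm{LHS})$ bootstraps to give \eqref{maro1a}. It does not: for $\delta$ small this only yields $\mathrm{LHS}\lesssim 1$, never $\mathrm{LHS}\le\eta$ for an arbitrary prescribed $\eta<1$. The constant in front of the $1$ cannot be absorbed. To conclude smallness you must show the boundary term is itself $O(\delta)$; this is exactly what the hypothesis \eqref{Nassum} is for. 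After rescaling to $N=1$, the Morawetz boundary term is controlled by $\sup_t\||\nabla|^{1/2}u_{<1}(t)\|_{L^2_x}^2$, and since $0<s_c<\tfrac12$ one can take $\theta$ small enough that $\theta+s_c<\tfrac12$, so Bernstein on the low-frequency piece gives $\||\nabla|^{1/2}u_{<1}\|_{L^2}\lesssim\||\nabla|^{\theta+s_c}u_{<1}\|_{L^2}\le\delta$. It is this $O(\delta^2)$ boundary term (not the $O(1)$ critical-regularity bound) that, combined with the commutator estimates, yields $Q_I\lesssim_\eta\delta+\delta^{c}$ and hence $\le\eta$ once $\delta(\eta)$ is chosen small. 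Without exploiting \eqref{Nassum} in the boundary term, your argument does not prove the theorem.

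A secondary point: the paper's bootstrap is run in a specific form (Proposition 4.4), assuming $Q_I\le 2\eta$ to derive auxiliary bounds such as $\||\nabla|^{(1+\varepsilon)/2}u_{lo}\|_{\mathcal S}\lesssim\eta^{1/2}$ and $\|u_{hi}\|_{\mathcal S}\lesssim\delta+\delta^{4/(d-2s_c)}$, which are then fed into the estimate for $G$; the continuity in $I$ (from Corollary \ref{localfin}) is what justifies this bootstrap. Your write-up gestures at a bootstrap but does not make this structure explicit; it is worth doing so, since the auxiliary $\mathcal S$-bounds on $u_{hi}$ and $u_{lo}$ are not immediate from \eqref{Nassum} alone and require the bootstrap hypothesis.
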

By scaling invariance of the equation \eqref{sch0}, we may choose $N=1.$  By a limiting argument, we may then take $I$ to be compact. Indeed, observe that by
Corollary \ref{localfin}, the left-hand side of \eqref{maro1a} varies continuously on $I$ and goes to zero when $I$ shrinks to a point. Thus, by standard continuity argument, it suffices to show  the following bootstrap version of Proposition \ref{Morawetz1}.

\begin{proposition}[Frequency-localized Morawetz estimate $\uppercase\expandafter{\romannumeral 2}$]\label{maro2}
   Let $d\geq 4,0<\eta<1$, and $u: I\times \R^d $ be the critical symmetric solution to \eqref{sch0} which  satisfies  \eqref{assum},\eqref{lowfre1}.  Then there exits $\delta>0$ with the following property:
     \begin{align}\label{maro3}
      \||\nabla|^{s_c}u_{hi}\|_{L_t^\infty L_x^2(I\times \R^d)}+\||\nabla|^{\theta+s_c} u_{lo}\|_{L_t^\infty L_x^2(I\times \mathbb{R}^d)}\leq \delta,
    \end{align}
    where $u_{hi}:=u_{\geq 1}$  and $u_{lo} :=u_{<1},$ such that we also have bootstrap hypothesis: if
    \begin{align}\label{maro1}
      Q_I:=\int_{I}\int_{\R^d}\f{|\nabla u_{lo}(t,x)|^2}{|x|^{1+\varepsilon}}dxdt\leq 2\eta ,
    \end{align}
    then we have
     \[Q_I\leq \eta.\]
\end{proposition}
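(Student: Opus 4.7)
My plan is to apply a Morawetz-type monotonicity formula to the low-frequency piece $u_{lo}$, which satisfies the truncated equation
$$(i\partial_t+\Delta)u_{lo}=P_{<1}(|u|^p u),$$
arranging the computation so that the positive quantity $Q_I$ appears on the left-hand side with a favourable sign. The errors generated by the fact that $P_{<1}(|u|^p u)\neq |u_{lo}|^p u_{lo}$ are then absorbed using the smallness \eqref{maro3}, the bootstrap hypothesis $Q_I\leq 2\eta$, and the refined weighted nonlinear estimates \eqref{refine1}--\eqref{refine2}.

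Concretely, I would form a Morawetz action of the shape
$$M(t)\;:=\;2\,\mathrm{Im}\!\int_{\R^d}\!\bar u_{lo}(t,x)\,\nabla u_{lo}(t,x)\cdot\nabla a(x)\,dx,$$
with a radial weight $a(x)$ whose derivatives produce the homogeneity $|x|^{-1-\varepsilon}$. Taking the defocusing sign together with the favourable $-|u_{lo}|^2\Delta^2 a$ contribution (which has the correct sign for $d\geq 4$), and integrating in $t$ over $I$, one obtains an inequality of the schematic form
$$Q_I\;\lesssim\;\sup_{t\in I}|M(t)|\;+\;|\mathrm{Err}|,$$
where $\mathrm{Err}$ is the spacetime pairing of the mismatch $P_{<1}(|u|^pu)-|u_{lo}|^pu_{lo}$ against $\nabla a\cdot\nabla u_{lo}$ and lower-order pieces. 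The boundary contribution $\sup_t|M(t)|$ is controlled by Cauchy--Schwarz, the weighted radial Sobolev inequality of Lemma \ref{HLS}, and the a priori bound \eqref{assum}, producing a term $\lesssim \delta^{\,c}\ll \eta$ thanks to \eqref{maro3}.

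The main technical work is the control of $\mathrm{Err}$. I would Littlewood--Paley decompose every factor of $u$ inside $|u|^pu$ and split $\mathrm{Err}$ into pieces of high-high and high-low type. To each piece I would apply \eqref{refine1} when the weight $|x|^{(1+\varepsilon)/2}$ can be placed on a high-frequency factor, and \eqref{refine2} in the complementary configuration. The critical observation, highlighted in Section \ref{further}, is that $|x|^{\pm(1+\varepsilon)/2}$ shares scaling with $|\nabla|^{\mp(1-\varepsilon)/2}$, so derivatives may be transferred between frequencies with no loss. Consequently every piece of $\mathrm{Err}$ carries at least one factor of $\||\nabla|^{s_c}u_{hi}\|_{L^\infty_t L^2_x}$ or $\||\nabla|^{\theta+s_c}u_{lo}\|_{L^\infty_t L^2_x}$, which is $\lesssim \delta$ by \eqref{maro3}, while the remaining factors are controlled in the weighted Strichartz space $\mathcal{S}$ via \eqref{weightstr} and \eqref{radso}, ultimately by $Q_I^{1/2}\leq(2\eta)^{1/2}$ through the bootstrap.

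Summing the contributions yields $Q_I\leq C_1\delta^{\,c}+C_2(\eta)\delta^{\,c'}$ for some $c,c'>0$, and choosing $\delta$ sufficiently small in terms of $\eta$ closes the bootstrap and gives $Q_I\leq\eta$. The main obstacle is this third step: in the sub-quadratic regime $p=\tfrac{4}{d-2s_c}<1$ arising for $d\geq 5$, the fractional chain rule offers essentially no derivative gain, so the argument must be executed entirely inside the weighted $\mathcal{S}$--$\mathcal{N}$ framework, where derivatives and powers of $|x|$ can be interchanged at the level of scaling. This is precisely the obstruction that the refined estimates \eqref{refine1}--\eqref{refine2} are designed to overcome, and it is what distinguishes the present argument from the three-dimensional case.
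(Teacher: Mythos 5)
Your overall strategy matches the paper's: apply a Morawetz-type inequality to $u_{lo}$ (which solves the truncated equation with forcing $G=P_{<1}F(u)-F(u_{lo})$), bound the boundary term by a power of $\delta$ using \eqref{maro3} and Bernstein, and absorb the error term by combining the smallness $\delta$ with the bootstrap hypothesis $Q_I\leq 2\eta$. The boundary term argument and the treatment of the purely-low-frequency piece of the error (the term of schematic form $|\nabla u_{lo}|\,|u_{lo}|^{4/(d-2s_c)}$, which indeed pairs against $\nabla u_{lo}$ and picks up $Q_I^{1/2}\lesssim\eta^{1/2}$) are essentially correct as you describe.

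The genuine gap is in your claim that the remaining factors appearing in the error estimate are ``controlled in the weighted Strichartz space $\mathcal{S}$ ... ultimately by $Q_I^{1/2}\leq(2\eta)^{1/2}$.'' This is only true for factors involving $u_{lo}$. The pieces of $G$ containing $u_{hi}$ require a spacetime ($L^2$-in-time) bound on the high-frequency portion, and that is \emph{not} a consequence of $Q_I^{1/2}\leq(2\eta)^{1/2}$: $Q_I$ sees only $\nabla u_{lo}$, and the a priori hypothesis \eqref{maro3} only gives $L^\infty_t L^2_x$ smallness of $|\nabla|^{s_c}u_{hi}$, which does not integrate in time. The paper resolves this with a separate nested bootstrap establishing the auxiliary bound $\|u_{hi}\|_{\mathcal{S}(I\times\R^d)}\lesssim \delta+\delta^{4/(d-2s_c)}$ (eq.~\eqref{highes}): one applies the weighted Strichartz estimate \eqref{weightstr} to $P_{hi}u$, splits the nonlinearity into $F(u_{lo})$, $O(|u_{lo}|^{4/(d-2s_c)}u_{hi})$ and $O(|u_{hi}|^{1+4/(d-2s_c)})$, estimates these via \eqref{nonesti} and \eqref{refine1} (together with \eqref{maro21}, which does come from $Q_I$), obtains $\|u_{hi}\|_{\mathcal{S}}\lesssim(\delta+\delta^{4/(d-2s_c)})(1+\|u_{hi}\|_{\mathcal{S}})$, and closes using the qualitative finiteness of $\|u_{hi}\|_{\mathcal{S}}$ from Corollary \ref{localfin}. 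This intermediate lemma is the real workhorse of the proposition and is exactly where the refined estimate \eqref{refine1} is used; once it is in hand, the paper's actual estimate of $\|G\|_{L^2_t L^{2d/(d+2(1-\varepsilon_0))}_x}$ is accomplished by ordinary H\"older, Bernstein, and radial Sobolev, not by \eqref{refine1}--\eqref{refine2} directly. (Also note that \eqref{refine2} is not used in this proposition at all; it enters later, in the non-evacuation argument of Section~\ref{verification}.) Without this auxiliary high-frequency bound, your proposed estimate of $\mathrm{Err}$ cannot close, so you should insert, before the Morawetz step, the lemma establishing \eqref{maro21}, \eqref{maro23}, and above all \eqref{highes} under the hypotheses \eqref{maro3} and \eqref{maro1}.
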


  In order to prove Proposition \ref{maro2}, we will primarily establish the corresponding estimate for low and high frequency portion of the solution $u$.
\begin{lemma}[Low and high frequency bound]
Under the conditions  of Proposition \ref{maro2}, we have the following estimates:
 \begin{align}
   \||\nabla|^{\f{1+\varepsilon}{2}} u_{lo}\|_{\mathcal{S}(I\times \R^d)}&\lesssim \eta^{1/2},\label{maro21}\\
   \|\nabla u_{lo}\|_{L_t^2 L_x^{\f{2d}{d-2(1-\varepsilon_0)}}(I\times \R^d)}&\lesssim \eta^{1/2},\label{maro23}\\
   \|u_{hi}\|_{\mathcal{S}(I\times \R^d)}&\lesssim \delta +\delta^{\f{4}{d-2s_c}}.\label{highes}
    \end{align}
    where $\varepsilon_0(d)>0$ is sufficiently small.
  \end{lemma}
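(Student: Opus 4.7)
The three claims share a common strategy: reduce the low-frequency bounds to the Morawetz quantity $Q_I^{1/2}\lesssim \eta^{1/2}$ supplied by the bootstrap hypothesis \eqref{maro1}, and the high-frequency bound to the smallness hypothesis \eqref{maro3}, in both cases using the weighted Hörmander–Mikhlin estimate of Lemma \ref{H-M} to swallow stray fractional derivatives.

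For \eqref{maro21} I would unpack the definition of $\|\cdot\|_{\mathcal S}$ applied to $|\nabla|^{\frac{1+\varepsilon}{2}}u_{lo}$: the weighted spacetime summand is $\||x|^{-\frac{1+\varepsilon}{2}}|\nabla|^{1+s_c}u_{lo}\|_{L^2_{t,x}}$, and since $|\nabla|^{s_c}P_{<1}$ is of Hörmander–Mikhlin type (remark after Lemma \ref{H-M}), Lemma \ref{H-M} with weight exponent $-\tfrac{1+\varepsilon}{2}\in(-\tfrac{d}{2},\tfrac{d}{2})$ dominates it by $\||x|^{-\frac{1+\varepsilon}{2}}\nabla u_{lo}\|_{L^2_{t,x}}=Q_I^{1/2}\lesssim\eta^{1/2}$; the $L^\infty_t\dot H^{\frac{1+\varepsilon}{2}+s_c}_x$ summand is $\le\delta$ by \eqref{maro3} with $\theta=\tfrac{1+\varepsilon}{2}$, and is absorbed once we take $\delta\lesssim\eta^{1/2}$ at the outset.

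For \eqref{maro23} I would invoke the radial HLS inequality \eqref{raso} with $\beta=0$, $p=2$, $\alpha=\tfrac{1+\varepsilon}{2}$; the scaling condition forces the derivative $s=\tfrac{1-\varepsilon}{2}-\varepsilon_0$, producing the target Lebesgue exponent and yielding $\|\nabla u_{lo}\|_{L^{2d/(d-2(1-\varepsilon_0))}_x}\lesssim \||x|^{-\frac{1+\varepsilon}{2}}|\nabla|^{\frac{1-\varepsilon}{2}-\varepsilon_0}\nabla u_{lo}\|_{L^2_x}$. Integrating in time and applying the weighted Hörmander–Mikhlin bound once more to the H–M multiplier $|\nabla|^{\frac{1-\varepsilon}{2}-\varepsilon_0}P_{<1}$ (valid for $\varepsilon,\varepsilon_0>0$ small), the right-hand side collapses to $\||x|^{-\frac{1+\varepsilon}{2}}\nabla u_{lo}\|_{L^2_{t,x}}=Q_I^{1/2}\lesssim\eta^{1/2}$.

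For \eqref{highes} I would apply the weighted Strichartz estimate \eqref{weightstr} to $(i\partial_t+\Delta)u_{hi}=P_{\ge 1}(|u|^pu)$. The initial data contribution is bounded by $\||\nabla|^{s_c}u_{hi}\|_{L^\infty_tL^2_x}\le\delta$ via \eqref{maro3}. For the nonlinearity I would decompose $u=u_{lo}+u_{hi}$ and, using the mean-value theorem when $p\ge 1$ and the Hölder bound $||z|^p-|w|^p|\lesssim|z-w|^p$ when $p<1$, write $|u|^pu=|u_{lo}|^pu_{lo}+O(|u|^pu_{hi})+O(|u_{hi}|^pu)$. The term $O(|u_{hi}|^pu)$ is handled by \eqref{nonesti} as $\lesssim\delta^p(\|u_{lo}\|_{\mathcal S}+\|u_{hi}\|_{\mathcal S})$, contributing the $\delta^p$; the term $O(|u|^pu_{hi})$ produces a multiple of $\|u_{hi}\|_{\mathcal S}$ which is reabsorbed into the left-hand side by smallness of $\delta$; and the purely low-frequency piece $P_{\ge 1}(|u_{lo}|^pu_{lo})$ is treated by the refined estimate \eqref{refine1}, whose first-factor derivative count $\bigl(\tfrac{d}{2}-\tfrac{(d-2s_c)(1+s_c+\varepsilon)}{4}\bigr)-$ is at most $\theta+s_c$ for a $\theta$ permitted by \eqref{maro3}, thereby again producing the factor $\delta^p$.

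The principal technical obstacle is this last purely low-frequency term: $u_{lo}$ is \emph{not} small in $\dot H^{s_c}$, so to obtain the $\delta^p$ from it one must shift the derivative load onto a factor where the hypothesis \eqref{maro3} genuinely bites. The refined estimates \eqref{refine1}--\eqref{refine2} are exactly the redistribution tools designed for this purpose, and checking that $\tfrac{d}{2}-\tfrac{(d-2s_c)(1+s_c+\varepsilon)}{4}\le\theta+s_c$ for an admissible $\theta$ (together with verifying that the accompanying negative-derivative factor $\||\nabla|^{-\frac{1-\varepsilon}{2}+}u_{lo}\|_{\mathcal S}$ remains bounded via a low-frequency Bernstein inequality) is the delicate constraint-balancing the argument must perform.
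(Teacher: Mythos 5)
Your treatments of \eqref{maro21} and \eqref{maro23} are correct and essentially the paper's: for \eqref{maro21} one identifies the weighted $L^2_{t,x}$ summand of $\||\nabla|^{(1+\varepsilon)/2}u_{lo}\|_{\mathcal{S}}$ with an H\"ormander--Mikhlin multiple of $Q_I^{1/2}$ and absorbs the $L^\infty_t L^2_x$ summand by \eqref{maro3}; for \eqref{maro23} the paper first applies Lemma~\ref{weiber} to replace $\nabla u_{lo}$ by $|\nabla|^{(1+\varepsilon)/2+s_c+\varepsilon_0}u_{lo}$ and then invokes \eqref{raso} and \eqref{maro21}, whereas you go directly from $\nabla u_{lo}$ via \eqref{raso} back to $Q_I^{1/2}$ — both routes are valid.

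Your proof of \eqref{highes}, however, has a genuine gap, concentrated in the two places where $u_{lo}$ appears as the $\mathcal{S}$-factor. First, your decomposition $|u|^pu=|u_{lo}|^pu_{lo}+O(|u|^pu_{hi})+O(|u_{hi}|^pu)$ produces, inside the last piece, a term $O(|u_{hi}|^pu_{lo})$ which you estimate via \eqref{nonesti} by $\delta^p\|u_{lo}\|_{\mathcal{S}}$; but $\|u_{lo}\|_{\mathcal{S}}$ is \emph{not} controlled by the hypotheses (its $L^\infty_t\dot H^{s_c}_x$ part is only $\lesssim A_c$, and its weighted $L^2_{t,x}$ part $\||x|^{-\frac{1+\varepsilon}{2}}|\nabla|^{\frac{1-\varepsilon}{2}+s_c}u_{lo}\|_{L^2_{t,x}}$ involves \emph{fewer} than one derivative and so cannot be reached from $Q_I^{1/2}$ by low-frequency Bernstein, which only lowers norms as the derivative count \emph{increases}). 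The paper sidesteps this by using the H\"older/mean-value bound $|F(u)-F(u_{lo})|\lesssim|u_{hi}|(|u_{lo}|^p+|u_{hi}|^p)$, which never produces a $|u_{hi}|^p u_{lo}$ piece at all; every contribution then carries either a $\|u_{hi}\|_{\mathcal{S}}$ factor (reabsorbed) or a $\nabla u_{lo}$ factor (small by \eqref{maro21}).

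Second, and for the same reason, your handling of the purely low-frequency piece fails: applying \eqref{refine1} with $v=u_{lo}$ yields the factor $\||\nabla|^{(-\frac{1-\varepsilon}{2})+}u_{lo}\|_{\mathcal{S}}$, and you claim this is bounded ``via a low-frequency Bernstein inequality.'' Lemma~\ref{weiber} gives $\||\nabla|^{s}u_{<N}\|_{\mathcal{S}}\lesssim N^{s}\|u_{<N}\|_{\mathcal{S}}$ only for $s>0$; there is no Bernstein inequality that lets you \emph{lower} the derivative index on a low-frequency piece, so this quantity is not bounded by the hypotheses. The paper's crucial extra step is to first trade the $P_{hi}$ projection for one full derivative on $F(u_{lo})$, turning the term into $\|P_{hi}(|u_{lo}|^p|\nabla u_{lo}|)\|_{\mathcal{N}}$; then \eqref{refine1} is applied with $v=\nabla u_{lo}$, whose $\mathcal{S}$-factor becomes $\||\nabla|^{(\frac{1+\varepsilon}{2})+}u_{lo}\|_{\mathcal{S}}$, which \emph{is} controlled by $\eta^{1/2}$ through \eqref{maro21} after a legitimate use of low-frequency Bernstein (raising $(\frac{1+\varepsilon}{2})+$ down to $\frac{1+\varepsilon}{2}$). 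Without this derivative-for-projection trade, the pure low-frequency term cannot be shown small, and the bootstrap for \eqref{highes} does not close.
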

  \begin{proof}
   From the definition of $\mathcal{S}$, Lemma \ref{weiber} \eqref{maro3} and \eqref{maro1} we derive \eqref{maro21} by choosing $\delta$ sufficiently small. \eqref{maro23} comes from  \eqref{maro21} and \eqref{raso}.
  Indeed, by Lemma \ref{weiber} and choosing $\varepsilon_0$ sufficiently small, we have
  \begin{align*}
    \|\nabla u_{lo}\|_{L_t^2 L_x^{\f{2d}{d-2(1-\varepsilon_0)}}}\lesssim \||\nabla|^{\f{1+\varepsilon}{2}+s_c+\varepsilon_0}u_{lo}\|_{L_t^2 L_x^{\f{2d}{d-2(1-\varepsilon_0)}}}.
  \end{align*}
   By\eqref{maro21} and \eqref{raso}, we  get \eqref{maro23}.

  Now it suffices to prove \eqref{highes}.
  We denote $P_{hi}:=P_{\geq 1}.$  Obviously
\begin{align*}
  (i\partial_t+\Delta)P_{hi}u=P_{hi}F(u).
\end{align*}
 By Strichartz estimate \eqref{weightstr}, \eqref{maro3} and  splitting  $P_{hi}F(u)$ into
 $$P_{hi}F(u)=P_{hi}F(u_{lo})+P_{hi}(F(u)-F(u_{lo})),$$  we have
  \begin{align}
    \quad   \|u_{hi}\|_{\mathcal{S}} &\lesssim \delta +\|P_{hi}F(u)\|_{\mathcal{N}}\\
    \begin{split}
   \lesssim \delta +\|P_{hi}(|u_{lo}|^{\f{4}{d-2s_c}}|\nabla u_{lo}|)\|_{\mathcal{N}}&+
   \|P_{hi}(|u_{lo}|^{\f{4}{d-2s_c}}|u_{hi}|)\|_{\mathcal{N}}\\
    +\||u_{hi}|^{\f{4}{d-2s_c}}|u_{hi}|\|_{\mathcal{N}}. \label{higha}
    \end{split}
  \end{align}
  For the fourth term of \eqref{higha}, from Proposition \ref{nonesti} and \eqref{maro3} we have
  \begin{align*}
    \||u_{hi}|^{\f{4}{d-2s_c}}|u_{hi}|\|_{\mathcal{N}}&\lesssim \||\nabla|^{s_c}u_{hi}\|_{L^\infty_tL_x^2}^{\f{4}{d-2s_c}}\|u_{hi}\|_{\mathcal{S}}\\
     &\lesssim \delta^{\f{4}{d-2s_c}}\|u_{hi}\|_{\mathcal{S}}.
  \end{align*}
  For the third term of \eqref{higha}, by Lemma \ref{weiber}, \eqref{refine1} and \eqref{maro3}, we have
  \begin{align*}
    &\quad \|P_{hi}(|u_{lo}|^{\f{4}{d-2s_c}}|u_{hi}|)\|_{\mathcal{N}}\\
    &\lesssim \||\nabla|^{\f{1-\varepsilon}{2}-s_c}(|u_{lo}|^{\f{4}{d-2s_c}}|u_{hi}|)\|_{\mathcal{N}}\\
    &\lesssim \||\nabla|^{(\f{d}{2}-\f{(d-2s_c)(1+s_c+\varepsilon)}{4})-}u_{lo}\|_{L_t^\infty L_x^2}^{\f{4}{d-2s_c}}\||\nabla|^{(-\f{1-\varepsilon}{2})+}u_{hi}\|_{\mathcal{S}}\\
    &\lesssim \delta^{\f{4}{d-2s_c}}\|u_{hi}\|_{\mathcal{S}}.
  \end{align*}
For the remained term of \eqref{higha},  by Lemma \ref{weiber}, \eqref{refine1}, \eqref{maro3} and \eqref{maro21} we have
  \begin{align*}
    &\quad \|P_{hi}(|u_{lo}|^{\f{4}{d-2s_c}}|\nabla u_{lo}|)\|_{\mathcal{N}}\\
    &\lesssim \||x|^{\f{1+\varepsilon}{2}}(|u_{lo}|^{\f{4}{d-2s_c}}|\nabla u_{lo}|)\|_{L_{t,x}^2}\\
    &\lesssim \||\nabla|^{(-\f{1-\varepsilon}{2})+}\nabla u_{lo}\|_{\mathcal{S}}\||\nabla|^{(\f{d}{2}-\f{(d-2s_c)(1+s_c+\varepsilon)}{4})-}u_{lo}\|_{L_t^\infty L_x^2}^{\f{4}{d-2s_c}}\\
    &\lesssim \eta^{\f{1}{2}}\delta^{\f{4}{d-2s_c}}.
  \end{align*}
  Putting all these together, we obtain
  \begin{align}
    \|u_{hi}\|_{\mathcal{S}}\lesssim(\delta+\delta^{\f{4}{d-2s_c}})(1+\|u_{hi}\|_{\mathcal{S}}),
  \end{align}
 by Corollary \ref{localfin}, we know $\|u_{hi}\|_{\mathcal{S}}<\infty$, after reorganizing  the term,  we finally derive that
  \begin{align*}
    \|u_{hi}\|_{\mathcal{S}(I\times \R^d)}\lesssim \delta +\delta^{\f{4}{d-2s_c}}.
  \end{align*}

\end{proof}

With the above preparation, we are now ready to prove Proposition  \ref{maro2}. First we need  the following particular form of Morawetz inequality which can be found in \cite{TVZ}.
\begin{lemma}[Morawetz inequality]
   Let $J$ be an interval, let $d\geq 3 $ and let $\phi,G:J\times \R^d \rightarrow \mathbb{C}$ solve the equation
  \begin{align*}
    i\partial_t \phi+\Delta\phi=F(\phi)+G
  \end{align*}
   Let $\varepsilon>0.$ If $\varepsilon$ is sufficiently small depending on $d$,  then we have \begin{align}
    \int_{J}\int_{\mathbb{R}^d}&\left(\f{|\phi(t,x)|^2}{\langle x\rangle^{3+\varepsilon}}+\f{|\phi|^{p+2}}{\langle x\rangle}+\f{|\nabla \phi(t,x)|^2}{\langle x\rangle^{1+\varepsilon}}\right)dxdt\\
    &\lesssim_{\varepsilon} \sup_{t\in J}\||\nabla|^{\f{1}{2}}\phi(t,x)\|_{L_x^2}^2\\
    &+\int_J\int_{\mathbb{R}^d}G(t,x)||\nabla \phi(t,x)|dxdt\\
    &+\int_J \int_{\R^d}\f{1}{\langle x\rangle}|G(t,x)||\phi(t,x)|dxt
    \end{align}
\end{lemma}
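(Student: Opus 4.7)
The plan is to apply the classical Morawetz multiplier method with a carefully chosen smooth radial weight. First I would construct a radial function $a(x) = a(|x|)$ with $\nabla a \in L^\infty$, smooth at the origin and growing like $|x|$ at infinity, chosen so that the pointwise bounds
$\partial_j\partial_k a\, \xi_j \xi_k \gtrsim |\xi|^2 \langle x\rangle^{-1-\varepsilon}$, $\Delta a \gtrsim \langle x\rangle^{-1}$, and $-\Delta^2 a \gtrsim \langle x\rangle^{-3-\varepsilon}$ all hold simultaneously. A concrete candidate is the function determined by $a(0)=a'(0)=0$ and $a''(r) = \langle r\rangle^{-1-\varepsilon}$; the remaining bounds are then verified by direct computation from the radial identities
$\partial_j\partial_k a = \frac{a'(r)}{r}(\delta_{jk} - \hat x_j \hat x_k) + a''(r)\hat x_j\hat x_k$ and $\Delta^2 a = a^{(4)} + \frac{2(d-1)}{r}a''' + \frac{(d-1)(d-3)}{r^2}a'' - \frac{(d-1)(d-3)}{r^3}a'$.

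Next I would introduce the Morawetz action
$$M(t) := 2\,\mathrm{Im} \int_{\R^d} \bar\phi(t,x)\, \nabla a(x) \cdot \nabla \phi(t,x)\, dx,$$
differentiate it in time using $i\partial_t \phi + \Delta \phi = F(\phi) + G$, and arrive at the standard identity
$$\frac{d}{dt} M(t) = 4 \int \partial_j\partial_k a\, \mathrm{Re}(\partial_j\bar\phi\,\partial_k\phi) - \int \Delta^2 a\,|\phi|^2 + \frac{2p}{p+2} \int \Delta a\, |\phi|^{p+2} + R_G(t),$$
where $R_G$ collects the contributions of the forcing $G$, schematically $R_G \approx \int \nabla a \cdot (\bar G \nabla \phi - G \nabla \bar\phi) + \int \Delta a\,\mathrm{Re}(\bar G \phi)$. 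By the choice of $a$ and the defocusing sign of $F$, the first three terms on the right are nonnegative and bound the three integrands on the left-hand side of the claimed inequality from below.

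Integrating in time over $J$ then yields
$$\int_J \int_{\R^d} \Big( \tfrac{|\phi|^2}{\langle x\rangle^{3+\varepsilon}} + \tfrac{|\phi|^{p+2}}{\langle x\rangle} + \tfrac{|\nabla\phi|^2}{\langle x\rangle^{1+\varepsilon}}\Big)\,dx\,dt \lesssim \sup_{t \in J} |M(t)| + \int_J |R_G(t)|\,dt.$$
For the boundary contribution, since $\nabla a$ and its derivatives are uniformly bounded, $\nabla a$ is a bounded multiplier on $\dot H^{1/2}$, so the $\dot H^{1/2}$--$\dot H^{-1/2}$ duality pairing gives $|M(t)| \lesssim \||\nabla|^{1/2}\phi(t)\|_{L^2_x}^2$, which is precisely the first term on the right-hand side. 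The $R_G$ piece unpacks directly into the two remaining error terms on the right once one uses $|\nabla a| \lesssim 1$ and $|\Delta a| \lesssim \langle x\rangle^{-1}$.

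The main obstacle is the construction and verification of the weight $a$: one must simultaneously secure the three positivity/coercivity bounds together with the boundedness of $\nabla a$, and in particular the fourth-order bound $-\Delta^2 a \gtrsim \langle x\rangle^{-3-\varepsilon}$ is delicate in $d\geq 4$ because several terms of comparable size can partially cancel and the correct sign must be extracted. Everything after that is routine manipulation in the spirit of Lin--Strauss.
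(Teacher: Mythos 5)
Your proposal is correct and is essentially the standard weighted Morawetz multiplier argument of Lin--Strauss type that the paper implicitly relies on: the paper does not prove this lemma itself, but cites \cite{TVZ}, whose proof is precisely the scheme you outline (construct a radial weight with bounded gradient, prescribed Hessian lower bound, $\Delta a\gtrsim\langle x\rangle^{-1}$, and $-\Delta^2 a\gtrsim\langle x\rangle^{-3-\varepsilon}$; compute the time derivative of the Morawetz action; bound the boundary term in $\dot H^{1/2}$). Your choice $a''(r)=\langle r\rangle^{-1-\varepsilon}$ does satisfy the required sign conditions for all $d\geq 3$ after one checks that the apparent $r^{-2}$ singularities in $\Delta^2 a$ near the origin cancel and that for large $r$ the $-(d-1)(d-3)a'/r^3$ term (or, when $d=3$, the $a^{(4)}+\tfrac{4}{r}a'''$ combination) carries the correct negative sign, so there is no gap beyond the routine verification you already flag.
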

  {\bf Proof of Proposition \ref{maro2}. }
Let $P_{lo}:=P_{<1},$ we substitute $\phi$ with $\phi=u_{lo}$, then the corresponding $G$ equals
\begin{align*}
  G:=P_{lo}F(u)-F(P_{lo}u).
\end{align*}
Using Bernstein inequality and \eqref{maro3}, we conclude that
\begin{align}\label{maro3a}
  \int_{I}\int_{\R^d}\f{|\nabla u_{lo}(t,x)|^2}{\langle x\rangle^{1+\varepsilon}}dxdt\lesssim _{\varepsilon}\delta+
  \int_{I}\int_{R^d}|G(t,x)|\left(|\nabla u_{lo}(t,x)|+\f{|u_{lo}(t,x)|}{\langle x\rangle}\right)dxdt.
\end{align}
Note that by Lemma \ref{Jap}
\begin{align*}
\int_{I}\int_{\R^d}\f{|\nabla u_{lo}(t,x)|^2}{|x|^{1+\varepsilon}}dxdt\lesssim \int_{I}\int_{\R^d}\f{|\nabla u_{lo}(t,x)|^2}{\langle x\rangle^{1+\varepsilon}}dxdt,
\end{align*}
it suffices to estimate
\begin{align*}
  \int_I \int_{R^d}|G(t,x)|\left(|\nabla u_{lo}(t,x)|+\f{|u_{lo}(t,x)|}{\langle x\rangle}\right)dxdt \lesssim \delta^{c},
\end{align*}
where $c$ is a given constant to be chosen later.
By the H\"older inequality and \eqref{maro23}, we estimate
\begin{align*}
  &\quad \int_I \int_{R^d}|G(t,x)||\nabla u_{lo}(t,x)|dxdt\\
  &\lesssim  \|G\|_{L_t^2 L_x^{\f{2d}{d+2(1-\varepsilon_0)}}}\|\nabla u_{lo}\|_{L_t^2 L_x^{\f{2d}{d-2(1-\varepsilon_0)}}}\\
  &\lesssim \eta^{\f{1}{2}}\|G\|_{L_t^2 L_x^\f{2d}{d+2(1-\varepsilon_0)}}.
\end{align*}
In dimension $d\geq 4,$ by the H\"older inequality, \eqref{raso} and \eqref{maro23} we have
\begin{align}\label{Hardyu}
\begin{split}
  \int_I\int_{\R^d}|G(t,x)|\f{|u_{lo}(t,x)|}{\langle x\rangle}dxdt &\lesssim \|G\|_{L_t^2 L_x^{\f{2d}{d+2(1-\varepsilon_0)}}}\left\|\f{u_{lo}}{\langle x\rangle}\right\|_{L_t^2 L_x^{\f{2d}{d-2(1-\varepsilon_0)}}}\\
  &\lesssim \eta^{\f{1}{2}}\|G\|_{L_t^2 L_x^{\f{2d}{d+2(1-\varepsilon_0)}}}.
  \end{split}
\end{align}
Thus it is reduced to show
\begin{align}\label{esG}
  \|G\|_{L_t^2 L_x^{\f{2d}{d+2(1-\varepsilon_0)}}}\lesssim_{\eta} \delta^{c}.
\end{align}

 We split $G$ into
 \begin{align*}
   G:=P_{lo}[F(u)-F(u_{lo})]-P_{hi}(F(u_{lo})).
    \end{align*}
  We can  show \eqref{esG} via
   \begin{align}
   \|P_{lo}O(|u_{hi}|&|u_{lo}|^{\f{4}{d-2s_c}}+|u_{hi}|^{1+\f{4}{d-2s_c}})\|_{L_t^2 L_x^{\f{2d}{d+2(1-\varepsilon_0)}}}
     +\|P_{hi}F(u_{lo})\|_{L_t^2 L_x^{\f{2d}{d+2(1-\varepsilon_0)}}}\notag\\
    &\lesssim \|P_{lo}O(|u_{hi}||u_{lo}|^{\f{4}{d-2s_c}}+|u_{hi}|^{1+\f{4}{d-2s_c}})\|_{L_t^2 L_x^{\f{2d}{d+2(1-\varepsilon_0)}}}\label{esG1}\\
      &+\|\nabla P_{hi}F(u_{lo})\|_{L_t^2 L_x^{\f{2d}{d+2(1-\varepsilon_0)}}}.\label{esG2}
   \end{align}
For \eqref{esG1}, by \eqref{assum}, Sobolev embedding, Lemma \ref{weiber}, \eqref{highes}, Bernstein, we estimate as
\begin{align*}
 \eqref{esG1}&\lesssim  \|O(|u_{hi}||u_{lo}|^{\f{4}{d-2s_c}}+|u_{hi}|^{1+\f{4}{d-2s_c}})\|_{L_t^2 L_x^{\f{2d}{d+2(1-\varepsilon_0)}}}\lesssim \||\nabla|^{s_c}u\|_{L_t^\infty L_x^2}^{\f{4}{d-2s_c}}\|u_{hi}\|_{L_t^2 L_x^{\f{2d}{d-2}}}\\
 &\lesssim \delta +\delta^{\f{4}{d-2s_c}}.
\end{align*}

Hence, it is remained to prove
\begin{align*}
  \||\nabla u_{lo}||u_{lo}|^{\f{4}{d-2s_c}}\|_{L_t^2L_x^\f{2d}{d+2(1-\varepsilon_0)}}\lesssim \delta^{c}.
\end{align*}
From \eqref{maro1} we have
\begin{align}
  \||x|^{-\f{1+\varepsilon}{2}}\nabla u_{lo}\|_{L_{t,x}^2}\lesssim \eta^{\f{1}{2}},
\end{align}
and by radial Sobolev embedding \eqref{raso}
\begin{align}
  \||\nabla|^{s}\nabla u_{lo}\|_{L_t^2 L_x^q}\lesssim \||x|^{-\f{1+\varepsilon}{2}}\nabla u_{lo}\|_{L_{t,x}^2}\lesssim \eta^{\f{1}{2}},
\end{align}
for some $q=(\f{2(d-1)}{d-2-\varepsilon})+,s=(\f{d}{q}-\f{d}{2}+\f{1+\varepsilon}{2})-.$ By Bernstein we conclude that
\begin{align}
  \|\nabla u_{lo}\|_{L_t^2 L_x^q}\lesssim \eta^{\f{1}{2}}.
\end{align}
By the H\"older inequality, we get
\begin{align*}
  \||\nabla u_{lo}||u_{lo}|^{\f{4}{d-2s_c}}\|_{L_t^2 L_x^{\f{2d}{d+2(1-\varepsilon_0)}}}\lesssim   \|\nabla u_{lo}\|_{L_t^2 L_x^q} \|u_{lo}\|_{L_t^\infty L_x^r}^{\f{4}{d-2s_c}},
\end{align*}
for some $r=\f{2d(d-1)p}{2(1-\varepsilon_0)(d-1)+d(1-\varepsilon)}->\f{2d}{d-2s_c}$.  By \eqref{maro3}, we have
\[\|u_{lo}\|_{L_t^\infty L_x^r}\lesssim \delta.\]

Combining  the estimate for \eqref{esG1} and \eqref{esG2} we have
\begin{align}
   \|G\|_{L_t^2 L_x^{\f{2d}{d+2(1-\varepsilon_0)}}}\lesssim_{\eta} \delta+\delta^{\f{4}{d-2s_c}}.
\end{align}
Now we can choose $c=\min\{1,\f{4}{d-2s_c}\}$ and  $\delta(\eta)$ sufficiently small, then we complete the proof.
\begin{remark}
  In order to use \eqref{raso} in \eqref{Hardyu},  we should ensure that $\f{2d}{d-2(1-\varepsilon_0)}<d$ which requires $d\geq 4$. For  $d=3$, one can adapt the argument in \cite{TVZ} to bypass the obstacle, we omit the details here.
\end{remark}

 \begin{corollary}\label{hidecay}
  Let $d\geq 4,$ and $u:I\times \mathbb{R}^d \rightarrow \mathbb{C}$ be the spherically symmetric maximal-lifespan solution to \eqref{sch0} which obeys \eqref{assum},\eqref{lowfre1} then
  \begin{align}
    &\lim_{N\rightarrow \infty}[\|u_{\geq N}\|_{\mathcal{S}}+\f{1}{N^{\f{1+\varepsilon}{2}}}\||\nabla|^{-\f{1-\varepsilon}{2}}\nabla u_{<N}\|_{\mathcal{S}}]=0.\label{hidecay}
    \end{align}
    In particular, for any $N>0$ being  a dyadic integer, we have
    \begin{align}
    \|u_{\geq N}\|_{\mathcal{S}}+\f{1}{N^{\f{1+\varepsilon}{2}}}\||\nabla|^{-\f{1-\varepsilon}{2}}\nabla u_{<N}\|_{\mathcal{S}}<\infty ,\quad \text{for all}\quad N>0.  \label{hidecay1}
    \end{align}
  \end{corollary}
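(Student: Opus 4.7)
The plan is to treat the two summands in \eqref{hidecay} separately, then derive the finiteness \eqref{hidecay1} from the limit by a Littlewood--Paley telescoping argument. For the low-frequency summand I would unpack $\|\,\cdot\,\|_{\mathcal{S}}$ into its two defining pieces. The $L_t^\infty L_x^2$ piece reduces, after combining $|\nabla|$ exponents, to $N^{-\f{1+\varepsilon}{2}}\||\nabla|^{s_c+\f{1+\varepsilon}{2}}u_{<N}\|_{L_t^\infty L_x^2}$, which tends to zero by Lemma \ref{decayhi} taken with $\theta=\f{1+\varepsilon}{2}$. The weighted $L_{t,x}^2$ piece simplifies, after using that $|\nabla|^{-\f{1-\varepsilon}{2}}\nabla$ equals $|\nabla|^{\f{1+\varepsilon}{2}}$ composed with a Riesz transform (harmless under weighted $L^2$ by Lemma \ref{H-M}), to $N^{-\f{1+\varepsilon}{2}}\||x|^{-\f{1+\varepsilon}{2}}|\nabla|^{s_c}\nabla u_{<N}\|_{L_{t,x}^2}$. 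Since $N^{-s_c}|\nabla|^{s_c}P_{<N}$ is a H\"ormander--Mikhlin multiplier uniformly in $N$, a second application of Lemma \ref{H-M} pulls out the factor $N^{s_c}$, and the remaining expression $N^{s_c-\f{1+\varepsilon}{2}}\||x|^{-\f{1+\varepsilon}{2}}\nabla u_{<N}\|_{L_{t,x}^2}$ squared equals the Morawetz integral $N^{2s_c}\int_I\int_{\R^d}|\nabla u_{<N}|^2/|Nx|^{1+\varepsilon}\,dxdt$, which is $o(1)$ by Proposition \ref{Morawetz1}.

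For the high-frequency summand I would apply the weighted Strichartz estimate \eqref{weightstr} to $u_{\geq N}$, which satisfies $(i\partial_t+\Delta)u_{\geq N}=P_{\geq N}F(u)$. The linear piece contributes $\||\nabla|^{s_c}u_{\geq N}(t_0)\|_{L_x^2}$, which is $o(1)$ by Lemma \ref{decayhi}. For the Duhamel term I would mimic the decomposition of $P_{\geq N}F(u)$ used in the proof of \eqref{highes}: the pure-high and mixed low-high terms are bounded by $\|u_{\geq N}\|_{\mathcal{S}}$ times a small factor via the refined estimate \eqref{refine1} and Lemma \ref{weiber}, with the smallness provided by the high-frequency $\dot{H}^{s_c}$ tail of Lemma \ref{decayhi}, while the purely low-frequency ``commutator'' term $P_{\geq N}F(u_{<N})$ is bounded by the analogue of \eqref{maro21} at threshold $N$, which is exactly the quantity controlled in the previous paragraph. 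Since every $\|u_{\geq N}\|_{\mathcal{S}}$-factor carries a small prefactor, one absorbs these on the left and concludes $\|u_{\geq N}\|_{\mathcal{S}}=o(1)$.

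The main technical obstacle is the \emph{a priori} finiteness of $\|u_{\geq N}\|_{\mathcal{S}}$ on the full lifespan $I$, which is what legitimises the absorption step. This is resolved by Corollary \ref{localfin} on compact subintervals $J\subset\subset I$ combined with a continuity-in-$J$ bootstrap, in the spirit of the reduction from Proposition \ref{Morawetz1} to Proposition \ref{maro2}. Once the limit \eqref{hidecay} is in hand, the finiteness \eqref{hidecay1} follows at once for all $N$ above some threshold $N_0$. For $N<N_0$ one writes $u_{\geq N}=u_{\geq N_0}+\sum_{N\leq N_k<N_0}P_{N_k}u$ (a finite sum of dyadic blocks) and bounds each block in the $\mathcal{S}$ norm using Lemma \ref{weiber}, the classical Bernstein inequalities, and the uniform bound $\||\nabla|^{s_c}u\|_{L_t^\infty L_x^2}\leq A$; the second summand of \eqref{hidecay1} is handled by an analogous telescoping applied to $|\nabla|^{-\f{1-\varepsilon}{2}}\nabla u_{<N}$.
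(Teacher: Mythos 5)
Your proposal is correct and follows essentially the same route as the paper: the paper derives \eqref{hidecay} from \eqref{maro21}, \eqref{highes} (the low/high frequency bounds established inside the proof of Proposition \ref{maro2}, which is Proposition \ref{Morawetz1} in bootstrap form) together with Lemma \ref{decayhi} and scaling invariance, which is precisely the content you unpack --- including the observation that, after weighted Bernstein, the weighted $L^2_{t,x}$ piece of the low-frequency summand is the Morawetz integral of Proposition \ref{Morawetz1}, and that the a priori finiteness of $\|u_{\geq N}\|_{\mathcal{S}}$ needed for the absorption is supplied by Corollary \ref{localfin}. Your telescoping argument for \eqref{hidecay1} (finiteness for small $N$) also matches the paper's proof.
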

  \begin{proof}
    \eqref{hidecay} comes from \eqref{maro21},\eqref{highes} and the scaling invariance of the equation. Now we use \eqref{hidecay} to prove \eqref{hidecay1}.
    Since \eqref{hidecay} implies \eqref{hidecay1} for $N$ is sufficiently large, it suffices to show that \eqref{hidecay1} also holds for $N$ is small.  We may assume $N_0$ such that $N\geq N_0$
    \begin{align}
     \|u_{\geq N}\|_{\mathcal{S}}+\f{1}{N^{\f{1+\varepsilon}{2}}}\||\nabla|^{-\f{1-\varepsilon}{2}}\nabla u_{<N}\|_{\mathcal{S}}<1.
    \end{align}
    For any $N<N_0$, we have
    \begin{align*}
      \|u_{>N}\|_{\mathcal{S}}&=\|u_{\geq N_0}\|_{\mathcal{S}}+\|u_{N<.<N_0}\|_{\mathcal{S}}\\
      &\lesssim 1+\sum_{N< M<N_0}\|u_M\|_{\mathcal{S}}\\
      &\lesssim 1+\sum_{N< M<N_0} M^{-\f{1+\varepsilon}{2}}\||\nabla|^{\f{1+\varepsilon}{2}}u_{M}\|_{\mathcal{S}}\\
      &\lesssim 1+\sum_{N< M<N_0} 1\\
      &<\infty,
    \end{align*}
    and
    \begin{align*}
      \||\nabla|^{\f{1+\varepsilon}{2}}u_{<N}\|_{\mathcal{S}}\lesssim \left(\f{N_0}{N}\right)^{\f{1+\varepsilon}{2}}\f{1}{N_0^{\f{1+\varepsilon}{2}}}\||\nabla|^{\f{1+\varepsilon}{2}}u_{<N_0}\|_{\mathcal{S}}<\infty.
    \end{align*}
    Thus we complete the proof.

  \end{proof}

  \section{the non-evacuation of energy }\label{verification}
In this part, we will prove  that the energy can not evacuate from high frequency to low frequency by showing that $N(t)$ has a lower bound.
\begin{proposition}\label{lowbound}
    Let $d\geq 4,$ and let $u:I\times \R^d \rightarrow \mathbb{C}$ be the critical spherically symmetric  maximal-lifespan solution to \eqref{sch0} which obeys \eqref{assum},\eqref{lowfre1}. Then
    \begin{align}
      \inf_{t\in I} N(t)>0.
    \end{align}
  \end{proposition}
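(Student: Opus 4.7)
The proof is by contradiction: suppose there is a sequence $t_n \in I$ with $N(t_n) \to 0$. Corollary \ref{fscale} rules out $t_n$ accumulating at a finite endpoint, so by \eqref{lowfre1} we may assume $\sup I = \infty$ and $t_n \to \infty$. The plan has three steps: (i) establish a small amount of extra regularity for $u$ below the critical index $s_c$; (ii) interpolate to obtain $u \in L^\infty_t L^2_x$; (iii) combine with rescaling and compactness to produce a contradiction.

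\textbf{Step (i): negative regularity.} Fix a dyadic $N>0$. From the Duhamel identity
\[
P_N u(t)=e^{i(t-t_n)\Delta}P_N u(t_n)-i\int_{t_n}^{t}e^{i(t-s)\Delta}P_N(|u|^p u)(s)\,ds,
\]
the compactness property \eqref{almost1} together with $N(t_n)\to 0$ implies $\|P_N u(t_n)\|_{\dot H^{s_c}}\to 0$ for every fixed $N$. Passing to the limit $t_n\to\infty$ yields the reverse-Duhamel representation
\[
P_N u(t)=i\int_{t}^{\infty}e^{i(t-s)\Delta}P_N(|u|^p u)(s)\,ds.
\]
To this identity I apply the weighted Strichartz estimate \eqref{weightstr}, bound the right-hand side via the refined nonlinear estimates \eqref{refine1}--\eqref{refine2} after a paraproduct-type splitting of $|u|^p u$ into low/high frequency factors, and use the high-frequency control from Corollary \ref{hidecay}. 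The outcome is a bound of the form $\|P_N u\|_{\mathcal S(I)}\lesssim N^{\eta}$ for all sufficiently small dyadic $N$, where $\eta>0$ depends on $d, s_c, \varepsilon$. Iterating this bootstrap finitely many times upgrades the gain past the critical exponent, yielding $\sigma>0$ with $u\in L^\infty_t\dot H^{s_c-\sigma}_x$.

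\textbf{Step (ii): interpolation.} Interpolating $L^\infty_t\dot H^{s_c-\sigma}_x$ with the a priori hypothesis $L^\infty_t\dot H^{s_c}_x$ gives $u\in L^\infty_t L^2_x$, so $M:=\sup_{t\in I}\|u(t)\|_{L^2_x}<\infty$. \textbf{Step (iii): rescaling contradiction.} Set $V_n(y):=N(t_n)^{-(d-2s_c)/2}u(t_n,y/N(t_n))$. By \eqref{almost1} the sequence $\{V_n\}$ is precompact in $\dot H^{s_c}$, and by \eqref{polow} together with Sobolev embedding, $\|V_n\|_{\dot H^{s_c}}\gtrsim 1$. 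Passing to a subsequence, $V_n\to V$ strongly in $\dot H^{s_c}$ with $V\not\equiv 0$. On the other hand, a direct change of variables yields
\[
\|V_n\|_{L^2_x}=N(t_n)^{s_c}\|u(t_n)\|_{L^2_x}\leq M\,N(t_n)^{s_c}\longrightarrow 0,
\]
since $s_c>0$. Hence $V_n\to 0$ in $L^2_x$; by uniqueness of distributional limits, $V\equiv 0$, contradicting $V\neq 0$.

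The main obstacle is the negative-regularity bootstrap in step (i). Its implementation is delicate because the nonlinearity is subquadratic in dimensions $d\geq 5$ (where $p<1$), so a naive application of the fractional chain rule fails. What rescues the argument is precisely the weighted-Strichartz framework of Section \ref{further}: the estimate \eqref{refine1} trades derivatives from a high-frequency factor onto a low-frequency factor via a power weight placed at the \emph{same scaling height} as the derivative, which is exactly what enables a genuine gain $\eta>0$ at each iteration step — a gain that is invisible in the unweighted Strichartz approach used in \cite{M3} for dimension three.
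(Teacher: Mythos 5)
Your Step~(i) runs the regularity gain in the wrong direction, and the conclusion $u\in L^\infty_t\dot H^{s_c-\sigma}_x$ is in fact \emph{false} in the scenario being ruled out. The standing normalization is $N(t)\leq 1$ with (for contradiction) $\inf_{t\in I}N(t)=0$, so the critical solution concentrates at arbitrarily \emph{low} frequencies. Concretely, combining \eqref{polow} (which via Sobolev embedding gives $\|u(t)\|_{\dot H^{s_c}}\gtrsim 1$ for all $t$) with the frequency localization \eqref{almost1}, one has $\||\nabla|^{s_c}u_{\leq C(\eta)N(t)}(t)\|_{L^2_x}\gtrsim 1$ at every time. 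If one had $u\in L^\infty_t\dot H^{s_c-\sigma}_x$, Bernstein would give $\||\nabla|^{s_c}u_{\leq M}\|_{L^\infty_t L^2_x}\lesssim M^\sigma$, which fails as soon as $C(\eta)N(t)$ is small enough. By the same mechanism, $\|P_N u\|_{\mathcal S(I)}\lesssim N^\eta$ for all small dyadic $N$ would force $\||\nabla|^{s_c}u_{<N_0}\|_{L^\infty_t L^2_x}\lesssim N_0^\eta$, again incompatible with concentration near $N(t)\to 0$. With Step~(i) broken, the interpolation to $L^\infty_t L^2_x$ and the rescaling--mass contradiction in Steps~(ii)--(iii) have no footing.

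The paper's proof gains regularity in the opposite (high-frequency) direction. After normalizing, it exploits $\inf_{t_0\in I}\||\nabla|^{s_c}u_{\geq N}(t_0)\|_{L^2_x}=0$ to discard the free evolution in Duhamel and bootstraps to the positive-regularity bound \eqref{lowbound1}, namely $\|u_{\geq N}\|_{\mathcal S}\lesssim N^{-(3-\varepsilon)/2+s_c}$ for $N\geq 1$, whence $u\in L^\infty_t\dot H^1_x$. It then takes $t_i$ with $N(t_i)\to 0$ and shows, using compactness and dominated convergence across the dyadic decomposition, that $\|\nabla u(t_i)\|_{L^2_x}\to 0$; interpolation kills the potential term, so $E[u(t_i)]\to 0$, and conservation of energy forces $u\equiv 0$. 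Thus the terminal conservation law is energy, not mass, and no rescaling limit is required. Your instinct that the reverse-Duhamel step and the refined weighted estimates \eqref{refine1}--\eqref{refine2} carry the argument is correct; what needs to be flipped is both the frequency direction of the bootstrap and the conserved quantity used to close the contradiction.
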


  Assume for contradiction that we have a critical solution $u:I\times \R^d\rightarrow \mathbb{C}$ obeying \eqref{assum} and the hypothesis \eqref{lowfre1} but such that
\[\inf_{t\in I}N(t)=0,\]
 we will obtain the following fact:
\begin{lemma}
 Under the conditions of  Proposition \ref{lowbound}, we have
  \begin{align}\label{lowbound1}
    \limsup_{N\rightarrow\infty }N^{\f{3-\varepsilon}{2}-s_c}\|u_{\geq N}\|_{\mathcal{S}}<\infty.
  \end{align}
\end{lemma}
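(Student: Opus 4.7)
The strategy is to combine the vanishing of $u_{\geq N}$ at times $t_n$ where $N(t_n)\to 0$, supplied by the compactness of the critical solution, with the refined nonlinear estimates \eqref{refine1}--\eqref{refine2} and a bootstrap argument starting from the baseline decay already recorded in Corollary \ref{hidecay}. Since by the contradiction hypothesis $\inf_{t\in I}N(t)=0$, I would first pick a sequence $t_n\in I$ with $N(t_n)\to 0$. Fix $N>0$; by the compactness property \eqref{almost1}, since $N\geq C(\eta)N(t_n)$ for all $n$ large and all $\eta>0$, we get $\||\nabla|^{s_c}u_{\geq N}(t_n)\|_{L^2_x}\to 0$. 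Applying the weighted Strichartz estimate \eqref{weightstr} to the equation $(i\partial_t+\Delta)u_{\geq N}=P_{\geq N}F(u)$ on an exhausting family of sub-intervals $[t_n,T_n]\subset\subset I$ (legitimate by Corollary \ref{localfin}) and letting $n\to\infty$, the free-evolution contribution drops out and one gets the reduction
\begin{align*}
\|u_{\geq N}\|_{\mathcal{S}(I)}\lesssim \|P_{\geq N}F(u)\|_{\mathcal{N}(I)}.
\end{align*}

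The main task is then to prove $\|P_{\geq N}F(u)\|_{\mathcal{N}}\lesssim N^{-(\f{3-\varepsilon}{2}-s_c)}$. I would perform the high-low splitting $u=u_{<N/2}+u_{\geq N/2}$ and write
\begin{align*}
P_{\geq N}F(u)=P_{\geq N}\bigl[F(u)-F(u_{<N/2})\bigr]+P_{\geq N}F(u_{<N/2}).
\end{align*}
The first piece is morally $O(|u|^{p}u_{\geq N/2})$; I would first use Bernstein (Lemma \ref{weiber}) to pull a factor $N^{-(\f{1-\varepsilon}{2}-s_c)}$ out of $P_{\geq N}$, then invoke \eqref{refine1} with $v=u_{\geq N/2}$ so that the right-hand side carries $\||\nabla|^{-\f{1-\varepsilon}{2}+}u_{\geq N/2}\|_{\mathcal{S}}\lesssim N^{-\f{1-\varepsilon}{2}+}\|u_{\geq N/2}\|_{\mathcal{S}}$, together with $\||\nabla|^{\sigma-}u\|_{L^\infty_tL^2_x}^{p}$ for $\sigma=\f{d}{2}-\f{(d-2s_c)(1+s_c+\varepsilon)}{4}$; Bernstein on the low-frequency projections of $u$ converts this into $N^{p(\sigma-s_c)+}A_c^p$, and the arithmetic identity $p(\sigma-s_c)=1-s_c-\varepsilon$ combines with the earlier Bernstein gains to give the target power multiplied by a $\|u_{\geq N/2}\|_{\mathcal S}$ that is absorbed by bootstrap. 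For the pure low-frequency leakage $P_{\geq N}F(u_{<N/2})$, I would transfer the full derivative $\f{3-\varepsilon}{2}-s_c$ onto the nonlinearity by Bernstein and then estimate $\||\nabla|^{\f{3-\varepsilon}{2}-s_c}F(u_{<N/2})\|_{\mathcal{N}}$ by the fractional chain rule of Lemma \ref{chain} for $d=4$, respectively the Hölder-continuous variant of Lemma \ref{Holderd} for $d\geq 5$, with the growing $\dot H^{\sigma}$-norms of $u_{<N/2}$ dominated by $N^{\sigma-s_c}\|u\|_{L^\infty_t\dot H^{s_c}_x}$ so that the exponents balance.

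Putting the two pieces together yields an inequality of the schematic form
\begin{align*}
N^{\f{3-\varepsilon}{2}-s_c}\|u_{\geq N}\|_{\mathcal{S}}\leq C+\theta(N)\cdot N^{\f{3-\varepsilon}{2}-s_c}\|u_{\geq N/2}\|_{\mathcal{S}},
\end{align*}
where $\theta(N)\to 0$ as $N\to\infty$ by \eqref{hidecay}. Iterating this bound dyadically from a base frequency $N_0$ large enough that $\theta(N)<1/2$ closes the bootstrap and yields \eqref{lowbound1}. The main obstacle I foresee is step 2 for $d\geq 5$, where the nonlinearity $F(u)=|u|^pu$ with $p=\f{4}{d-2s_c}<1$ is only Hölder-continuous of order $1+p$, so no fractional derivative of order exceeding $1+p$ can land on $F(u)$. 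Since $\f{3-\varepsilon}{2}-s_c$ can be rather close to (and in principle just below) $1+p$ for large $d$, the frequency and derivative bookkeeping must be performed carefully—using Lemma \ref{Holderd} with a free parameter $\sigma$ chosen slightly above $\f{\bar s}{p}$ exactly as in the proof of \eqref{nonesti}—so that every individual application stays strictly below the Hölder threshold while still harvesting the full decay.
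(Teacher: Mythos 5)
Your overall architecture matches the paper's: use $\inf_t N(t)=0$ together with \eqref{almost1} to kill the free-evolution term in the weighted Strichartz inequality, reduce to bounding $\|P_{\geq N}F(u)\|_{\mathcal N}$, decompose the nonlinearity into low/high pieces, and close with a bootstrap. The paper performs the decomposition at a \emph{fixed} frequency $N_0$ (tied to the bootstrap parameter $\delta$, using the splitting $F(u_{<N_0})$ plus cross terms involving $u_{\geq N_0}$) and iterates on the two-parameter hypothesis $\|u_{\geq N}\|_{\mathcal S}\leq\eta N^{-\f{3-\varepsilon}{2}+s_c}+\delta$, whereas you decompose at $N/2$ and set up a dyadic recursion $B(N)\leq C+\theta(N)B(N/2)$ with $\theta\to 0$. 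That difference is cosmetic and could probably be made to work.

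The genuine gap is in your treatment of the pure low-frequency leakage $P_{\geq N}F(u_{<N/2})$. You propose to Bernstein the full order $\f{3-\varepsilon}{2}-s_c$ onto $F$ and then invoke the fractional chain rule (Lemma~\ref{chain} for $d=4$, Lemma~\ref{Holderd} for $d\geq 5$). But for $F(u)=|u|^pu$ with $p=\f{4}{d-2s_c}<1$, $F$ is only H\"older continuous of order $1+p$, and Lemma~\ref{Holderd} is structurally limited to fractional orders $s<\alpha$; no choice of the auxiliary parameter $\sigma$ circumvents that constraint. Meanwhile the inequality $\f{3-\varepsilon}{2}-s_c>1+p$ is equivalent to $(\f{1-\varepsilon}{2}-s_c)(d-2s_c)>4$, which holds whenever $d\geq 9$ with $s_c$ and $\varepsilon$ small. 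In that regime your step 2 fails outright, and the obstacle you ``foresee'' is not resolved by tuning $\sigma$ ``slightly above $\bar s/p$.''

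The paper never puts a fractional derivative of order exceeding $1$ on $F$. Instead, Lemma~\ref{weiber} is applied twice: once to pull out $N^{-(\f{1-\varepsilon}{2}-s_c)}$ by exactly cancelling the $|\nabla|^{-\f{1-\varepsilon}{2}+s_c}$ that is \emph{built into the definition of $\mathcal N$}, and once to pull out $N^{-1}$ by inserting $|\nabla|^{-1}|\nabla|$. The only derivative ever landing on the nonlinearity is the single classical one, $\nabla F(u_{<N_0})=O(|u_{<N_0}|^p\nabla u_{<N_0})$, which requires only $F\in C^1$ (true for all $p>0$). What remains, $\||x|^{\f{1+\varepsilon}{2}}|u_{<N_0}|^p\nabla u_{<N_0}\|_{L^2_{t,x}}$, is a pure \emph{weight} estimate handled by H\"older plus the radial Stein--Weiss inequality, i.e.\ precisely the content of \eqref{refine2}: all the high-order $|\nabla|$'s that show up on the right-hand side of \eqref{refine2} act on the \emph{linear} factors $u_{<N_0}$ and $\nabla u_{<N_0}$, never on the nonlinearity. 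This cancellation mechanism between the built-in negative-order derivative of $\mathcal N$ and the high-frequency Bernstein factor is the whole point of the weighted norms introduced in Section~\ref{further}, and it is the piece your proposal is missing.
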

\begin{proof}
  Let $\eta>0$ be a small number to be chosen later. By \eqref{hidecay}, there exists $\widetilde N_{0}>0$ such that
  \begin{align}
    \|u_{\geq \widetilde N_{0}}\|_{\mathcal{S}}+\f{1}{\widetilde N_{0}^{\f{1+\varepsilon}{2}}}\||\nabla|^{(1+\varepsilon)/2} u_{<\widetilde N_{0}}\|_{\mathcal{S}}\lesssim \eta.
  \end{align}
  By scaling invariance, we may assume $\widetilde N_{0}=1,$ thus
  \begin{align}
    \||\nabla|^{(1+\varepsilon)/2}u_{<1}\|_{\mathcal{S}}&\leq \eta; \label{de1}\\
    \|u_{\geq 1}\|_{\mathcal{S}}&\leq \eta.\label{de2}
  \end{align}
  We claim that:\\
  \begin{claim}
  For any given $\delta>0$ such that
\begin{align}\label{boot1}
  \|u_{\geq N}\|_{\mathcal{S}}\leq \eta N^{-(3-\varepsilon)/2+s_c}+\delta\quad \text{for all}\quad N\geq 1,
\end{align}
 then
  \begin{align}\label{boots}
   \|u_{\geq N}\|_{\mathcal{S}}\leq \eta N^{-(3-\varepsilon)/2+s_c}+\f{\delta}{2}\quad \text{for all}\quad N\geq 1.
  \end{align}
  \end{claim}
 Assuming the  claim,  by iterating the above procedure, we will conclude that
  \begin{align*}
    \|u_{\geq N}\|_{\mathcal{S}}\leq \eta N^{-\f{3-\varepsilon}{2}+s_c}\quad \text{for all}\quad N\geq 1.
  \end{align*}
 Now we are dedicated to proving the claim. Indeed, by choosing  $0<\delta\leq \eta$  such that $\eqref{boot1}$ holds. Furthermore, we can take a dyadic number  $ N_0 \geq 1$   such that $\eta N_0^{-\f{3-\varepsilon}{2}+s_c}\sim \delta,$ then
  \begin{align}\label{boota}
    \|u_{\geq N}\|_{\mathcal{S}}\lesssim \eta N^{-\f{3-\varepsilon}{2}+s_c}\quad \text{for all}\quad 1\leq N\leq   N_0,
  \end{align}
  and
  \begin{align}\label{bootb}
  \|u_{\geq  N_0}\|_{\mathcal{S}}\lesssim \delta.
   \end{align}
  Let $N\geq 1$ , applying $P_{\geq N}$ to both sides of \eqref{sch0} we have
  \begin{align}
    (i\partial_t +\Delta)u_{\geq N}=P_{\geq N}F(u).
  \end{align}
  Hence, by weighted Strichartz estimate \eqref{weightstr} we have
  \begin{align}
    \|u_{\geq N}\|_{\mathcal{S}}\lesssim \||\nabla|^{s_c}u_{\geq N}(t_0)\|_{L_x^2}+\|P_{\geq N}F(u)\|_{\mathcal{N}},
  \end{align}
  for any $t_0 \in I.$   As $\inf_{t\in I}N(t)=0$,  we have
  \begin{align}
    \inf_{t_0 \in I}\||\nabla|^{s_c}u_{\geq N}(t_0)\|_{L_x^2}=0.
  \end{align}
   Thus
   \begin{align}\label{nowaste}
  \|u_{\geq N}\|_{\mathcal{S}}\lesssim \|P_{\geq N}F(u)\|_{\mathcal{N}}.
  \end{align}
  We split $F(u)$ as
  \begin{align*}
    F(u)&=F(u_{<N_0})+O(|u_{\geq N_0}|(|u_{<1}|^{\f{4}{d-2s_c}}+|P_{<1}u_{\leq N_0}|^{\f{4}{d-2s_c}}))\\
    &+O(|u_{\geq N_0}|(|u_{\geq 1}|^{\f{4}{d-2s_c}}+|P_{\geq 1}u_{\leq N_0}|^{\f{4}{d-2s_c}})).
  \end{align*}
  So that we have
  \begin{align}
\text{RHS of \eqref{nowaste}}\lesssim & \|P_{\geq N}O(|u_{\geq N_0}|(|u_{\geq 1}|+|P_{\geq 1}u_{\leq N_0}|)^{\f{4}{d-2s_c}})\|_{\mathcal{N}}\label{nowaste1}\\
&+\|P_{\geq N} O(|u_{\geq N_0}|(|u_{<1}|+|P_{<1}u_{\leq N_0}|)^{\f{4}{d-2s_c}})\|_{\mathcal{N}}\label{nowaste2}\\
&+\|P_{\geq N}F(u_{< N_0})\|_{\mathcal{N}}\label{nowaste3}.
  \end{align}
   By \eqref{nonesti}, \eqref{de2}, \eqref{bootb},we have
 \begin{align*}
   \|P_{\geq N}O(|u_{\geq N_0}||u_{\geq 1}|^{\f{4}{d-2s_c}})\|_{\mathcal{N}}&\lesssim \||u_{\geq N_0}||u_{\geq 1}|^{\f{4}{d-2s_c}}\|_{\mathcal{N}}\\
   &\lesssim \||\nabla|^{s_c}u_{\geq 1}\|_{L_t^\infty L_x^2}^{\f{4}{d-2s_c}}\|u_{\geq N_0}\|_{\mathcal{S}}\\
   &\lesssim \eta^{\f{4}{d-2s_c}}\delta.
 \end{align*}
The other term in  \eqref{nowaste1} is estimated similarly.

For \eqref{nowaste2}, by Lemma \ref{weiber}, \eqref{refine1}  \eqref{de1} and \eqref{bootb} we obtain
\begin{align*}
  &\quad \|P_{\geq N}  O(|u_{\geq N_0}||u_{<1}|^{\f{4}{d-2s_c}})\|_{\mathcal{N}}\\
  &\lesssim \||\nabla|^{\f{1-\varepsilon}{2}-s_c}  O(|u_{\geq N_0}||u_{<1}|^{\f{4}{d-2s_c}})\|_{\mathcal{N}}\\
  &\lesssim  \||\nabla|^{(\f{d}{2}-\f{(d-2s_c)(1+s_c+\varepsilon)}{4})-}u_{<1}\|_{L_t^\infty L_x^2}^{\f{4}{d-2s_c}}\||\nabla|^{(-\f{1-\varepsilon}{2})+}u_{\geq N_0}\|_{\mathcal{S}}\\
  &\lesssim \eta^{\f{4}{d-2s_c}}\delta.
\end{align*}
The other term of \eqref{nowaste2} is estimated similarly.

For the \eqref{nowaste3}, by Lemma \ref{weiber} and \eqref{refine2}
\begin{align*}
\|P_{\geq N}F(u_{<N_0})\|_{\mathcal{N}}&\lesssim  N^{-\f{3-\varepsilon}{2}+s_c}\||x|^{\f{1+\varepsilon}{2}}O (|u_{<N_0}^p \nabla u_{<N_0}|)\|_{L_{t,x}^2}\\
&\lesssim N^{-\f{3-\varepsilon}{2}+s_c} \||\nabla |^{\f{1+\varepsilon}{2}}u_{<N_0}\|_{L_t^\infty L_x^2}^p \||\nabla|^{\f{1-\varepsilon}{2}-s_c+p(s_c-\f{1+\varepsilon}{2})}\nabla u_{<N_0}\|_{\mathcal{S}}.
\end{align*}
Since by \eqref{de1} and \eqref{boota} we have
\begin{align*}
  &\quad \||\nabla|^{\f{1+\varepsilon}{2}} u_{<N_0}\|_{L_t^\infty L_x^2}\lesssim \||\nabla|^{\f{1+\varepsilon}{2}} u_{\leq 1}\|_{L_t^\infty L_x^2}+\sum_{1<M<N_0}\||\nabla|^{\f{1+\varepsilon}{2}} u_M\|_{L_t^\infty L_x^2}\\
  &\lesssim \eta+\eta \sum_{1<M<N_0} M^{\f{1+\varepsilon}{2}} M^{-\f{3-\varepsilon}{2}}\\
  &\lesssim \eta,
\end{align*}
and
\begin{align*}
  &\quad \||\nabla|^{\f{1-\varepsilon}{2}-s_c+p(s_c-\f{1+\varepsilon}{2})}\nabla u_{<N_0}\|_{\mathcal{S}}\\
  &\lesssim  \||\nabla|^{\f{1-\varepsilon}{2}-s_c+p(s_c-\f{1+\varepsilon}{2})}\nabla u_{<1}\|_{\mathcal{S}}+\sum_{1<M< N_0} \||\nabla|^{\f{1-\varepsilon}{2}-s_c+p(s_c-\f{1+\varepsilon}{2})}\nabla u_{M}\|_{\mathcal{S}}\\
  &\lesssim \eta+\eta \sum_{1<M< N_0} M^{\f{1-\varepsilon}{2}-s_c+p(s_c-\f{1+\varepsilon}{2})}M M^{-\f{3-\varepsilon}{2}+s_c}\\
  &\lesssim \eta.
\end{align*}
Thus
\begin{align}
  \|P_{\geq N}F(u_{<N_0})\|_{\mathcal{N}}\lesssim \eta^{p}\eta  N^{-\f{3-\varepsilon}{2}+s_c}.
\end{align}
Combining the separated parts contributed to $\|u_{>N}\|_{\mathcal{S}}$ we have
\begin{align}
  \|u_{>N}\|_{\mathcal{S}}\lesssim \eta^p(\eta N^{-\f{3-\varepsilon}{2}+s_c}+\delta)\quad \text{for} \quad N\geq 1.
\end{align}
By choosing $\eta$ sufficiently small, we complete the proof.
\end{proof}

{\bf Proof of Proposition \ref{lowbound}}
Now we can illuminate that $\inf_{t\in I}N(t)=0$ is incompatible with energy-conservation.
In fact, by \eqref{lowbound1}, for sufficiently large $N$, we have
\begin{align}
\|\nabla P_{N}u\|_{L_{t}^\infty L_x^2}\lesssim N^{-\f{1-\varepsilon}{2}},
\end{align}
and for each  dyadic number $N$
\begin{align}
\|\nabla P_{N}u\|_{L^\infty_t L_x^2}\lesssim N^{1-s_c}.
\end{align}
Thus, by choosing  $M$ sufficiently large
\begin{align*}
 \|\nabla u\|_{L_t^\infty L_x^2} \lesssim \|\nabla u_{<M^{-1}}\|_{L_t^\infty L_x^2}+\|\nabla u_{M^{-1}\leq .< M}\|_{L_t^\infty L_x^2}+\|\nabla u_{\geq M}\|_{L_t^\infty L_x^2},
\end{align*}
as $\inf_{t\in I}N(t)=0$,  we may choose a time sequence $\{t_i\}\in I$ such that  $N(t_i)\rightarrow 0,$  and by dominated convergence theorem we conclude that
\begin{align*}
  \|\nabla u(t_i)\|_{L_x^2}\rightarrow 0 \quad \text{as} \quad N(t_i) \rightarrow 0.
\end{align*}
By interpolation
\begin{align}
\|u\|_{L_x^{p+2}}\lesssim \|u\|_{\f{dp}{2}}^{\theta}\|u\|_{\f{2d}{d-2}}^{1-\theta}\lesssim \||\nabla|^{s_c}u\|_{L_x^2}^{\theta}\|\nabla u\|_{L_x^2}^{1-\theta}\rightarrow 0 \quad \text{as}\quad  N(t_i)  \rightarrow 0,
\end{align}
where $0<\theta<1$.Thus
  \begin{align}\label{energy1}
    E(u)=\int\f{1}{2} |\nabla u|^2 +\f{1}{p+2}|u|^{p+2}dx \rightarrow 0, \quad \text{as} \quad N(t_i) \rightarrow 0.
  \end{align}
 By the energy conservation law of \eqref{sch0}, \eqref{energy1} implies that $u\equiv 0$, which is impossible.
\section{rule out the critical solution  }\label{rule out}
 \begin{theorem}\label{Ibound}
    Let $d\geq 4,$ and let $u:I\times \mathbb{R}^d \rightarrow \mathbb{C}$ be the critical maximal-lifespan spherically symmetric solution to \eqref{sch0} which obeys \eqref{assum},\eqref{lowfre1}. Suppose that $u$ is not identically zero, then $I$ is bounded.
  \end{theorem}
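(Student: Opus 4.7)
The plan is to argue by contradiction: assume $\sup I = +\infty$, the other case being symmetric. Proposition \ref{lowbound} and \eqref{lowfre1} pin $N(t)$ in a compact set bounded away from zero, so $N(t)\sim 1$ uniformly on $I$; Lemma \ref{locon} then decomposes $I$ into infinitely many characteristic subintervals of length $\sim 1$, whence $|I|=\infty$. The argument amounts to balancing an $O(1)$ \emph{upper} bound for
\[
Q_I:=\int_I\int_{\R^d}\frac{|\nabla u_{lo}(t,x)|^2}{|x|^{1+\varepsilon}}\,dx\,dt
\]
coming from the Morawetz analysis of Section \ref{frequency}, against a \emph{lower} bound of the form $Q_I\geq c_\star(\eta)|I|$ coming from the compactness of the critical solution; together these force $|I|<\infty$.

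The upper bound is essentially already done. Lemma \ref{decayhi} combined with a rescaling arranges the bootstrap hypothesis \eqref{maro3} of Proposition \ref{maro2} with $\delta$ as small as we wish, so the conclusion of Proposition \ref{maro2} delivers $Q_I\leq\eta$ on every compact subinterval of the lifespan and hence, by monotone convergence, on all of $I$. Crucially, the bound $\eta$ does not depend on $|I|$.

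For the lower bound we claim that for every $t\in I$,
\[
\int_{\R^d}\frac{|\nabla u_{lo}(t,x)|^2}{|x|^{1+\varepsilon}}\,dx\geq c_\star(\eta)>0.
\]
From \eqref{polow} and the Sobolev embedding $\dot{H}^{s_c}\hookrightarrow L^{2d/(d-2s_c)}$ we have $\|u(t)\|_{\dot{H}^{s_c}}\gtrsim 1$; the smallness of $u_{hi}$ from \eqref{maro3} upgrades this to $\|u_{lo}(t)\|_{\dot{H}^{s_c}}\gtrsim 1$. The frequency compactness \eqref{almost3} localizes this mass to the annulus $\{c(\eta)\lesssim|\xi|<1\}$, so applying the Bernstein relation $\|\nabla u_M\|_{L^2}\sim M^{1-s_c}\||\nabla|^{s_c}u_M\|_{L^2}$ dyadically on $M\in[c(\eta),1)$ and summing yields $\|\nabla u_{lo}(t)\|_{L^2}^2\gtrsim c(\eta)^{2(1-s_c)}$. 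The identity $\nabla u_{lo}=(\nabla P_{<1}|\nabla|^{-s_c})(|\nabla|^{s_c}u)$ expresses $\nabla u_{lo}$ as convolution of $|\nabla|^{s_c}u$ against a kernel that is smooth and has sufficient decay at infinity, and this transfers the physical-space compactness \eqref{almost1}--\eqref{almost2} from $|\nabla|^{s_c}u$ to $\nabla u_{lo}$: its $L^2$ mass is concentrated in $\{|x|\leq C(\eta)\}$ up to arbitrarily small tails. Combining these two facts gives the pointwise lower bound above.

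Integrating in $t$ yields $Q_I\geq c_\star(\eta)|I|$; together with $Q_I\leq\eta$ this forces $|I|\leq\eta/c_\star(\eta)<\infty$, the desired contradiction. The main obstacle is the lower-bound step: converting the $\dot{H}^{s_c}$-compactness of $u$ into pointwise control of a weighted $\dot{H}^1$-type integral of $u_{lo}$. This requires the smallness of $u_{hi}$ (so that $P_{<1}$ retains a positive fraction of the $\dot{H}^{s_c}$ mass), a dyadic Bernstein argument that uses $c(\eta)>0$ crucially, and the transfer of physical-space concentration from $|\nabla|^{s_c}u$ to $\nabla u_{lo}$ via the kernel identity above; all resulting constants depend on the compactness modulus but this is harmless since the Morawetz constant $\eta$ can be taken as small as we please.
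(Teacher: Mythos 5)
Your proposal is correct and reaches the same contradiction (an $O(1)$ spacetime quantity pinched against a linear-in-$|I|$ lower bound coming from compactness), but it chooses a different observable than the paper. The paper tests against the high-frequency Strichartz quantity $\||\nabla|^{s_c}u_{>N}\|_{L_t^2L_x^{2d/(d-2)}}$: using \eqref{polow} and the almost-periodicity \eqref{almost1}--\eqref{almost3} together with $\inf_{t}N(t)>0$ and $N(t)\leq 1$, one gets a uniform pointwise lower bound $\||\nabla|^{s_c}u_{>N}(t)\|_{L_x^{2d/(d-2)}}\gtrsim 1$ (the physical-space transfer here is carried out by the sharp high-pass $P_{>N}$, whose kernel is Schwartz), so $|I|^{1/2}\lesssim\||\nabla|^{s_c}u_{>N}\|_{L_t^2L_x^{2d/(d-2)}}$, and the right-hand side is finite thanks to the radial Sobolev embedding \eqref{radso} and the qualitative $\mathcal{S}$-bound \eqref{hidecay1} that the Morawetz analysis produced. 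You instead test directly against the Morawetz density $Q_I$: the upper bound $Q_I\leq\eta$ is Proposition~\ref{maro2}, while the lower bound requires you to (i) use $\eqref{polow}$ plus the smallness of $u_{hi}$ to keep an $O(1)$ amount of $\dot H^{s_c}$-mass in $u_{lo}$, (ii) run a dyadic Bernstein argument across $[c(\eta),1)$ to upgrade from $|\nabla|^{s_c}$ to a full derivative, and (iii) transfer the spatial concentration via the operator $\nabla P_{<1}|\nabla|^{-s_c}$. The last step is the only place where you are slightly imprecise: its symbol $i\xi\varphi(\xi)|\xi|^{-s_c}$ is not smooth at the origin, so the kernel is not Schwartz; it has only algebraic decay of order $|x|^{-d-(1-s_c)}$. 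That is still $L^1$ with tails small enough to carry the compactness through (since $1-s_c>0$), so the argument survives, but the paper's choice avoids the Bernstein upgrade and the non-Schwartz kernel entirely, which is why its lower bound step is a one-liner. Both routes ultimately rest on the same ingredient, namely that the weighted Strichartz/Morawetz machinery of Section~\ref{frequency} yields an interval-length-independent spacetime bound.
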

  \begin{proof}
    By \eqref{almost1} and the fact that $N(t)$ has lower bound, we may  choose $N$ sufficiently small such that
    \begin{align}
    \||\nabla|^{s_c}u_{>N}\|_{L_x^{\f{2d}{d-2}}}\gtrsim 1,
    \end{align}
    then integrating with respect to the time variable over the interval $I$, we have
    \[|I|^{\f{1}{2}}\lesssim  \||\nabla|^{s_c}u_{>N}\|_{L_t^2 L_x^{\f{2d}{d-2}}}. \]
    By \eqref{hidecay1} and \eqref{radso}, we know $\||\nabla|^{s_c}u_{>N}\|_{L_t^2 L_x^{\f{2d}{d-2}}}<\infty$,
    which implies  $|I|<\infty$.
    \end{proof}
Theorem \ref{Ibound} means that $u$ blows up in finite time, thus by Corollary \ref{fscale}, $N(t)$ does not  have upper bound in $I$, which is inconsistent with \eqref{lowfre1}.

\section{Appendix}
In this part, we dedicate to proving Lemma \ref{purtabation}. First we  recall the definition of  Strichartz norm and Strichartz estimate.
\begin{definition}[Admissible pair]\label{admi} Let $d\geq 4,$ we call a pair of exponent $(q,r)$ admissible if
\begin{align}
  \frac{2}{q}=d(\f{1}{2}-\f{1}{r}) \quad \textrm{with}\quad 2\leq q\leq \infty.
\end{align}
For a time interval $I$, we define Strichartz norm $\rm{S}(I)$ as 
\begin{align}
  \|u\|_{\rm {S}(I)}:=\sup\{\|u\|_{L^q_tL^r_x(I\times\mathbb{R}^d)}:(q,r)\quad \text{admissible}\}.
\end{align}
We also define the dual of $\rm S(I)$ by $\rm N(I)$, we note that
\begin{align}
  \|u\|_{\rm N(I)}\lesssim \|u\|_{L_t^{q'}L_x^{r'}(I\times \R^d)}\quad \text{for any admissible pair}~(q,r).
\end{align}
\end{definition}
\begin{proposition}[Strichartz estimate]
  Let ~$u:I\times \mathbb{R}^d\rightarrow \mathbb{C}$~be a solution to
  \begin{align}
    (i\partial_t+\Delta)u=F
  \end{align}
  and let~$ s\geq 0$, then
  \begin{align}
    \||\nabla|^su\|_{S(I)}\lesssim\|u(t_0)\|_{\dot{H}^s_x}+\||\nabla|^sF\|_{N(I)},
  \end{align}
  for any ~$t_0 \in I$.
\end{proposition}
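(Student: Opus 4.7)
The plan is to deduce this as an essentially immediate consequence of the classical Keel--Tao theorem applied to the Schr\"odinger propagator $e^{it\Delta}$. First I would record the two foundational estimates for $e^{it\Delta}$: the energy identity $\|e^{it\Delta}f\|_{L^2_x}=\|f\|_{L^2_x}$, which is Plancherel, and the dispersive bound $\|e^{it\Delta}f\|_{L^\infty_x}\lesssim |t|^{-d/2}\|f\|_{L^1_x}$, which follows from the explicit Gaussian convolution kernel. These two bounds place $e^{it\Delta}$ in the abstract setting of a $\sigma$-dispersive unitary group with $\sigma=d/2$, so the Keel--Tao framework applies verbatim.

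Second, I would invoke Keel--Tao to obtain, for every admissible pair $(q,r)$ in the sense of Definition~\ref{admi}, the homogeneous bound $\|e^{it\Delta}u_0\|_{L^q_tL^r_x(\R\times\R^d)}\lesssim \|u_0\|_{L^2_x}$. Since $d\geq 4$, the endpoint $(q,r)=(2,\tfrac{2d}{d-2})$ is comfortably inside the range where Keel--Tao proved their sharp $L^2_tL^{\frac{2d}{d-2}}_x$ estimate, so no interpolation issues arise. Dualizing this homogeneous estimate and applying the Christ--Kiselev lemma (to promote the full-line integration inherent in a $TT^*$ bound to the retarded integration $\int_{t_0}^t$) yields the inhomogeneous Strichartz inequality
\begin{align*}
\Big\|\int_{t_0}^{t}e^{i(t-s)\Delta}F(s)\,ds\Big\|_{L^q_tL^r_x(I\times\R^d)} \lesssim \|F\|_{L^{\tilde q'}_tL^{\tilde r'}_x(I\times\R^d)}
\end{align*}
for any two admissible pairs $(q,r)$ and $(\tilde q,\tilde r)$; the only pair that would require Christ--Kiselev's hypothesis to be checked is the double-endpoint case, which is again inside the range guaranteed by Keel--Tao in $d\geq 4$.

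Third, I would substitute $u$ into the Duhamel identity $u(t)=e^{i(t-t_0)\Delta}u(t_0)-i\int_{t_0}^{t}e^{i(t-s)\Delta}F(s)\,ds$, apply the previous two bounds term by term, take the supremum over admissible $(q,r)$ on the left and the infimum over admissible $(\tilde q,\tilde r)$ on the right, and recognize the resulting quantities as the Strichartz norms $\|\,\cdot\,\|_{\rm S(I)}$ and $\|\,\cdot\,\|_{\rm N(I)}$ from Definition~\ref{admi}. This produces the $s=0$ version of the claim.

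Finally, to insert $|\nabla|^{s}$, I would use that $|\nabla|^{s}$, $e^{it\Delta}$, and the time integral are all Fourier multipliers in $x$ and therefore commute. Consequently $|\nabla|^{s}u$ satisfies the same Duhamel representation with data $|\nabla|^{s}u(t_0)$ and forcing $|\nabla|^{s}F$, and applying the $s=0$ estimate to this equation delivers the stated inequality. I do not anticipate a genuine obstacle: every step reduces either to Plancherel, the explicit kernel of $e^{it\Delta}$, or the Keel--Tao theorem, all of which are available without restriction in the regime $d\geq 4$ considered here.
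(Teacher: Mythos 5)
Your proposal is correct and follows the standard route (Plancherel plus the $|t|^{-d/2}$ dispersive estimate fed into the Keel--Tao abstract machinery, then Christ--Kiselev for the retarded non-endpoint inhomogeneous bounds, with the double endpoint handled directly by Keel--Tao in $d\geq 4$, and finally commuting $|\nabla|^s$ through Duhamel). The paper states this proposition in the Appendix without proof, treating it as classical, so there is no internal argument to compare against; your sketch supplies exactly the canonical derivation one would expect.
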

In the proof of Lemma \ref{purtabation}, we need the following result. One can carry over the  proof of Lemma 3.4 in \cite{V2}  verbatim.
\begin{lemma}[Persistence of regularity]
  Let $I$ be a compact time interval, and $u$ be  a  solution to \eqref{purtabation1} obeying
  \begin{align}
    \|u\|_{L_{t,x}^{\f{(d+2)p}{2}}(I\times \R^d)}\leq M, \quad \||\nabla|^{s_c}e\|_{\op{N}(I)}\leq L \ll 1,
  \end{align}
  then we have
  \begin{align}
    \||\nabla|^{s_c}u\|_{\rm S(I)}\leq C(M) \|u_0\|_{\dot{H}^{s_c}_x}.
  \end{align}
\end{lemma}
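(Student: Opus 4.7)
The plan is to follow the standard persistence-of-regularity argument, subdividing $I$ into finitely many intervals on each of which the scattering norm of $u$ is small, and then iterating a Strichartz estimate. Since the lemma is only for the specific pair $(s_c, p)$ with $p = \frac{4}{d-2s_c}$, we must be careful that the nonlinearity $F(u) = |u|^p u$ is subquadratic when $d\geq 5$; this forces us to invoke the Hölder-type fractional chain rule (Lemma \ref{Holderd}) rather than the smooth version (Lemma \ref{chain}).

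First I would partition $I = \bigcup_{j=1}^{J} I_j$ into consecutive subintervals with $I_j = [t_j, t_{j+1}]$, chosen so that
\[
\|u\|_{L_{t,x}^{(d+2)p/2}(I_j\times\R^d)} \leq \eta,
\]
for some small $\eta = \eta(d,s_c)$ to be determined, with $J = J(M,\eta)\lesssim (M/\eta)^{(d+2)p/2}$. On each such $I_j$, the Strichartz estimate applied to the equation $(i\partial_t+\Delta)u = F(u)+e$ gives
\[
\||\nabla|^{s_c}u\|_{\op{S}(I_j)} \lesssim \|u(t_j)\|_{\dot{H}^{s_c}_x} + \||\nabla|^{s_c}F(u)\|_{\op{N}(I_j)} + L.
\]
The task is then to estimate the nonlinear term by a product of a small factor (using the bounded scattering norm) times $\||\nabla|^{s_c}u\|_{\op{S}(I_j)}$ so that it can be absorbed.

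For the nonlinear estimate, I would work with a single admissible pair, for instance the $L_{t,x}^{(d+2)p/2}$-compatible pair arising naturally from the problem, and split according to the range of $p$. When $d=4$ we have $p>1$ and $F\in C^1$, so Lemma \ref{chain} applied with a Hölder split $\frac{1}{r}=\frac{p}{r_1}+\frac{1}{r_2}$ yields
\[
\||\nabla|^{s_c}F(u)\|_{\op{N}(I_j)} \lesssim \|u\|_{L_{t,x}^{(d+2)p/2}(I_j)}^{p} \||\nabla|^{s_c}u\|_{\op{S}(I_j)} \lesssim \eta^{p} \||\nabla|^{s_c}u\|_{\op{S}(I_j)}.
\]
When $d\geq 5$ we have $p<1$, so $F$ is merely Hölder continuous of order $1+p$; in this case I would apply Lemma \ref{Holderd} with $\alpha=p$, $s=s_c$, and a parameter $\sigma$ chosen slightly larger than $s_c/p$ so that the exponent hypotheses hold, then convert $\||\nabla|^\sigma u\|$ back to $\||\nabla|^{s_c}u\|$ in a Strichartz norm by interpolation with $\|u\|_{L_{t,x}^{(d+2)p/2}}$. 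In either case the outcome is the same absorbable bound $\lesssim \eta^{p}\||\nabla|^{s_c}u\|_{\op{S}(I_j)}$.

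Choosing $\eta$ so that the implicit constant times $\eta^p$ is $\leq 1/2$ and absorbing, I obtain
\[
\||\nabla|^{s_c}u\|_{\op{S}(I_j)} \leq C\bigl(\|u(t_j)\|_{\dot{H}^{s_c}_x} + L\bigr),
\]
for an absolute constant $C$. Iterating this relation from $j=1$ to $j=J$, using $\|u(t_{j+1})\|_{\dot{H}^{s_c}_x}\leq \||\nabla|^{s_c}u\|_{\op{S}(I_j)}$, yields
\[
\||\nabla|^{s_c}u\|_{\op{S}(I_j)} \leq C^{j}\|u_0\|_{\dot{H}^{s_c}_x} + (C^{j+1}-C)L/(C-1),
\]
and summing over the $J=J(M)$ pieces gives the claim, with $C(M) = e^{CJ(M)}$ absorbing also the contribution of $L$ since $L\ll 1$. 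The main obstacle is the subquadratic regime $d\geq 5$: choosing the exponents in Lemma \ref{Holderd} so that the resulting $\||\nabla|^\sigma u\|$-norm lands at an admissible Strichartz pair (and that the weight condition $(1-\tfrac{s_c}{p\sigma})p_1>1$ holds) requires some bookkeeping, but it is precisely the same calculation carried out in \cite{V2}.
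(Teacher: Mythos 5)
Your argument is correct and follows exactly the standard partition--Strichartz--bootstrap--iterate scheme; the paper itself gives no proof of this lemma but instead asserts that one may ``carry over the proof of Lemma 3.4 in \cite{V2} verbatim,'' and that reference is precisely the argument you reconstruct. The one point worth tightening is the chain-rule step for $d\geq 5$: you do not actually need Lemma \ref{Holderd} to bound $\||\nabla|^{s_c}F(u)\|_{\rm N(I_j)}$, because $G(u)=|u|^pu$ is genuinely $C^1$ (in the real-variable sense) for every $p>0$, with $|G'(u)|\lesssim |u|^p$, so the smooth fractional chain rule (Lemma \ref{chain}) applies directly and yields the desired $\|u\|^p_{L_{t,x}^{(d+2)p/2}}\||\nabla|^{s_c}u\|_{\rm S}$ bound in a single stroke. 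The subquadratic obstruction you are anticipating is real, but it affects \emph{difference} estimates $\||\nabla|^{s_c}(F(u)-F(\tilde u))\|$ as needed in the stability Lemma \ref{purtabation}, not the single-function estimate required here. Relatedly, invoking Lemma \ref{Holderd} ``with $\alpha=p$, $s=s_c$'' applied to $F(u)=|u|^pu$ is slightly off, since that lemma is for a Hölder-$\alpha$ function $G$ itself; one would first need a product-rule split $|u|^pu = |u|^p\cdot u$ and then apply Lemma \ref{Holderd} only to the factor $|u|^p$. None of this changes the outcome: with the corrected appeal to Lemma \ref{chain}, the partition into $J(M,\eta)$ pieces, the absorption for $\eta$ small, and the iteration $\|u(t_{j+1})\|_{\dot H^{s_c}}\le\||\nabla|^{s_c}u\|_{\rm S(I_j)}$ all go through and give $C(M)=e^{CJ(M)}$ as you say.
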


In what follows,  we denote
\begin{align*}
X(I)&:=L_t^{\f{2(d-2s_c+2)}{(d-2s_c)(1-s_c)}}L_x^{\f{2(d-2s_c+2)}{d-2s_c}}(I\times \R^d), \\ Y(I)&:=L_t^\f{2(d-2s_c+2)}{(d+4-2s_c)(1-s_c)}L_x^{\f{2(d-2s_c+2)}{d-2s_c+4}}(I\times \R^d),\\
X'(I)&:=L_t^{\f{2(d-2s_c+2)}{(d-2s_c)(1-s_c)}}\dot{H}^{s_c,\f{2d(d-2s_c+2)}{d^2+4s_c-4s_c^2}}(I\times \R^d),\\
Y'(I)&=L_t^\f{2(d-2s_c+2)}{(d+4-2s_c)(1-s_c)}\dot{H}_x^{s_c,\f{2d(d-2s_c+2)}{d^2+4d-4s_c^2+4s_c}}(I\times \R^d).
\end{align*}
Obviously, we have $X'(I)\hookrightarrow X(I)$, $Y'(I)\hookrightarrow Y(I).$
\begin{remark}
  The reason we choose the particular form of  $X(I)$ and $Y(I)$ stems from the following fact: by dispersive estimate and Hardy-Littlewood-Sobolev inequality we can obtain relatively neat nonlinear estimate
  \begin{align}\label{stris}
    \left\|\int_{0}^t e^{it\Delta}|u|^pu(s)ds \right\|_{X(I)}\lesssim \||u|^pu\|_{Y(I)}\lesssim \|u\|_{X(I)}^{p+1}.
  \end{align}
\end{remark}
Next we will present some nonlinear estimates.
\begin{lemma}\label{dedi}
  Let $F(u)=|u|^pu$ for some $p>0$ and let $0<s<1$. For $1<r,r_1,r_2,\infty$ such that $\f{1}{r}=\f{1}{r_1}+\f{p}{r_2}$, we have
  \begin{align}
    \||\nabla|^s[F(u+v)-F(u)]\|_{L_x^r}\lesssim \||\nabla|^su\|_{L_x^{r_1}}\|v\|_{L_x^{r_2}}^p+\||\nabla|^sv\|_{L_x^{r_1}}\|u+v\|_{L_x^{r_2}}^p.
  \end{align}
\end{lemma}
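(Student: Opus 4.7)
The strategy is to prove this as a ``difference'' analog of the fractional chain rule for $F(w)=|w|^p w$, by combining the fundamental theorem of calculus with the tools already recorded in Lemmas \ref{product}, \ref{chain}, and \ref{Holderd}. The starting point is the identity
\[
F(u+v)-F(u) = \int_0^1\bigl[F_z(u+\theta v)\,v + F_{\bar z}(u+\theta v)\,\bar v\bigr]\,d\theta,
\]
where the Wirtinger derivatives $F_z, F_{\bar z}$ obey the pointwise bound $|F_z(w)|+|F_{\bar z}(w)|\lesssim |w|^p$. After applying $|\nabla|^s$ and using Minkowski in $\theta$, matters reduce to bounding $\||\nabla|^s[F_z(u+\theta v)\,v]\|_{L_x^r}$ (and its complex conjugate counterpart) uniformly for $\theta\in[0,1]$.

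\textbf{Distributing the derivative.} On each $\theta$-slice I would apply the fractional product rule (Lemma \ref{product}) with the exponent split $\tfrac{1}{r}=\tfrac{p}{r_2}+\tfrac{1}{r_1}$. The piece where the derivative falls on $v$ is directly controlled by $\|F_z(u+\theta v)\|_{L_x^{r_2/p}}\||\nabla|^s v\|_{L_x^{r_1}}\lesssim \|u+v\|_{L_x^{r_2}}^{p}\||\nabla|^s v\|_{L_x^{r_1}}$, producing the second summand of the claim. For the piece where the derivative falls on $F_z(u+\theta v)$, I would further split
\[
F_z(u+\theta v)=F_z(u)+\bigl[F_z(u+\theta v)-F_z(u)\bigr].
\]
On the first piece, the fractional chain rule yields a contribution of the form $\||\nabla|^s u\|_{L_x^{r_1}}\|u\|_{L_x^{r_2}}^{p-1}\|v\|_{L_x^{r_2}}$, which after comparison with the triangle inequality $\|u\|_{L_x^{r_2}}\lesssim \|u+v\|_{L_x^{r_2}}+\|v\|_{L_x^{r_2}}$ is absorbed into the two target summands. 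The remainder $[F_z(u+\theta v)-F_z(u)]\,v$ carries an extra factor comparable to $|v|$ (either by a $C^1$ mean-value estimate or by H\"older continuity of $F_z$) and so feeds into the $\||\nabla|^s v\|_{L_x^{r_1}}\|u+v\|_{L_x^{r_2}}^p$ term.

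\textbf{Main obstacle: the sub-quadratic case $0<p<1$.} For $p\geq 1$, the function $F_z$ is $C^1$ with $|F_z'(w)|\lesssim|w|^{p-1}$, so the fractional chain rule of Lemma \ref{chain} applies directly and the scheme closes with routine H\"older bookkeeping. The genuinely delicate case is $0<p<1$, where $F_z(w)\sim |w|^p$ is merely H\"older continuous of order $p$, and the classical chain rule is unavailable; it must be replaced by Lemma \ref{Holderd} for H\"older functions. This forces the introduction of an auxiliary exponent $\sigma\in(s/p,1)$ and a splitting of one of the $L_x^{r_2}$ factors into a conjugate pair $(p_1,p_2)$ satisfying $(1-\tfrac{s}{p\sigma})p_1>1$, and verifying that these parameters remain compatible with the scaling relation $\tfrac{1}{r}=\tfrac{1}{r_1}+\tfrac{p}{r_2}$ is the main bookkeeping challenge. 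I expect essentially all of the technical work to live here; once the H\"older chain rule is in place, the remainder $F_z(u+\theta v)-F_z(u)$ is estimated pointwise by $|v|^p$, which closes the argument.
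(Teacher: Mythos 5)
First, note that the paper itself offers no proof of Lemma~\ref{dedi}: it is stated in the appendix without argument, and the intended reference is the standard literature (e.g.\ the Killip--Visan Clay lecture notes \cite{kv2}, or Visan's thesis \cite{V2}). So there is no ``paper proof'' for you to match; the only question is whether your argument is itself correct.

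For $p\geq1$ your plan is sound, but for $0<p<1$ there is a genuine gap, and you have essentially named it yourself without resolving it. After the FTC representation and the fractional product rule, the problematic term is $\||\nabla|^s F_z(u+\theta v)\|$ (times $\|v\|$ in some Lebesgue norm). Since $F_z(w)\sim|w|^p$ is only $p$-H\"older for $p<1$, the only available tool in the paper is Lemma~\ref{Holderd}, which produces a bound of the shape $\||u|^{p-s/\sigma}\|_{L^{p_1}}\,\||\nabla|^{\sigma}u\|_{L^{(s/\sigma)p_2}}^{s/\sigma}$ with a \emph{strictly larger} derivative order $\sigma>s/p>s$ and a \emph{fractional} power $s/\sigma<1$. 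There is no way, at the level of a pointwise-in-time H\"older/Sobolev bookkeeping, to convert this into the single factor $\||\nabla|^{s}u\|_{L^{r_1}}$ that the lemma demands: the conclusion of Lemma~\ref{dedi} involves only $\||\nabla|^s\cdot\|_{L^{r_1}}$ and $\|\cdot\|_{L^{r_2}}$, and a term $\||\nabla|^{\sigma}u\|$ with $\sigma>s$ is not controlled by these. (In the one place the paper does invoke Lemma~\ref{Holderd}, namely in the proof of \eqref{Hode}, the mismatched derivative order is repaired by Sobolev embedding because the \emph{a priori} bound $u\in L_t^\infty\dot H^{s_c}_x$ is available; Lemma~\ref{dedi} has no such hypothesis, so that trick is unavailable here.) This is precisely why the standard proofs of this difference estimate for $p<1$ (Killip--Visan, Visan) do not go through FTC plus product rule plus a H\"older chain rule, but instead run a direct Littlewood--Paley/paraproduct decomposition of $P_N\bigl[F(u+v)-F(u)\bigr]$, using the elementary pointwise bounds $|F(a)-F(b)|\lesssim|a-b|\bigl(|a|^p+|b|^p\bigr)$ and $|F_z(a)-F_z(b)|\lesssim|a-b|^{\min(p,1)}\bigl(|a|+|b|\bigr)^{\max(p-1,0)}$ together with Bernstein. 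Your proof, as written, does not close in the subquadratic regime, which is exactly the regime relevant for $d\geq5$ in this paper.

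A secondary, smaller point: on the ``derivative falls on $v$'' piece you write $\|F_z(u+\theta v)\|_{L^{r_2/p}}\lesssim\|u+v\|_{L^{r_2}}^p$, but $\|u+\theta v\|_{L^{r_2}}$ is only bounded by the \emph{maximum} of $\|u\|_{L^{r_2}}$ and $\|u+v\|_{L^{r_2}}$; you then need the triangle inequality $\|u\|_{L^{r_2}}^p\lesssim\|u+v\|_{L^{r_2}}^p+\|v\|_{L^{r_2}}^p$ and the interpolation $\|v\|_{L^{r_1}}\|v\|_{L^{r_2}}^p$-type term to be absorbed, which is doable but should be said explicitly.
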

\begin{lemma}
Let $d\geq 4$, then with spacetime norms over $I\times \R^d$, we have
\begin{align}
  \|F(u)\|_{Y(I)}&\lesssim \|u\|_{X(I)}^{1+p}\label{nona}\\
  \||\nabla|^{s_c}[F(u)-F(v)]\|_{L_t^2 L_x^{\f{2d}{d+2}}}&\lesssim \|u-v\|_{X(I)}^{\f{4}{d-2s_c}}\|v\|_{\dot{S}^{s_c}}+\|u\|_{X(I)}^{\f{4}{d-2s_c}}\|u-v\|_{\dot{S}^{s_c}(I)}\label{nond}
\end{align}
\end{lemma}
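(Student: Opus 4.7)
The plan is to establish the two estimates by complementary standard techniques: H\"older's inequality for \eqref{nona}, and the subquadratic fractional difference rule of Lemma \ref{dedi} combined with H\"older in time for \eqref{nond}.

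For \eqref{nona} I would write $F(u)=|u|^{p+1}$ and apply H\"older's inequality in space and time. The key observation is that the exponents of $Y(I)$ are exactly $\frac{1}{p+1}$ times those of $X(I)$: denoting by $(q_X,r_X)$ and $(q_Y,r_Y)$ the respective Lebesgue exponents, a direct check gives
\[
\frac{p+1}{q_X} = \frac{(d+4-2s_c)(1-s_c)}{2(d-2s_c+2)} = \frac{1}{q_Y}, \qquad
\frac{p+1}{r_X} = \frac{d-2s_c+4}{2(d-2s_c+2)} = \frac{1}{r_Y},
\]
so \eqref{nona} is immediate.

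For \eqref{nond} the approach is two-step. First, apply Lemma \ref{dedi} pointwise in time with $s=s_c\in(0,1)$, spatial exponent $r=\frac{2d}{d+2}$, bare-function exponent $r_2=\frac{2(d-2s_c+2)}{d-2s_c}$ equal to the spatial index of $X(I)$, and derivative exponent $r_1$ determined by $\frac{1}{r_1}=\frac{d+2}{2d}-\frac{p}{r_2}$. Writing $F(u)-F(v)=F(v+(u-v))-F(v)$ and applying Lemma \ref{dedi} yields pointwise in time
\[
\||\nabla|^{s_c}[F(u)-F(v)]\|_{L_x^{2d/(d+2)}}
\lesssim \||\nabla|^{s_c}v\|_{L_x^{r_1}}\|u-v\|_{L_x^{r_2}}^p
+ \||\nabla|^{s_c}(u-v)\|_{L_x^{r_1}}\|u\|_{L_x^{r_2}}^p.
\]
Second, I would integrate in time via H\"older with $\frac{1}{2}=\frac{1}{q_1}+\frac{p}{q_X}$, which fixes $q_1$.

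The main thing to verify---and the only genuine obstacle---is that $(q_1,r_1)$ so defined is $L^2$-Strichartz-admissible in the sense of Definition \ref{admi}. A direct arithmetic check gives $\frac{2}{q_1}+\frac{d}{r_1}=\frac{d}{2}$, together with $q_1\ge 2$ and $r_1<\frac{2d}{d-2}$, so admissibility indeed holds. Consequently $\||\nabla|^{s_c}v\|_{L_t^{q_1}L_x^{r_1}}\le\|v\|_{\dot{S}^{s_c}}$ and similarly for the $(u-v)$-term, yielding \eqref{nond}. A secondary subtlety in dimensions $d\ge 5$ is that $p=\frac{4}{d-2s_c}<1$ makes $F$ only H\"older continuous, but Lemma \ref{dedi} is formulated precisely to handle this subquadratic regime, so no additional device is needed.
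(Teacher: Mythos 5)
Your proposal is correct and follows exactly the approach the paper (tersely) indicates: \eqref{nona} by matching the $X$/$Y$ exponents via H\"older after noting $p+1=\frac{d-2s_c+4}{d-2s_c}$, and \eqref{nond} by applying Lemma \ref{dedi} pointwise in time and then H\"older in $t$, with the crucial arithmetic check that the resulting pair $(q_1,r_1)$ satisfies $\frac{2}{q_1}+\frac{d}{r_1}=\frac{d}{2}$ with $q_1\geq 2$, hence is admissible. The paper simply omits these computations, so your write-up supplies precisely the missing details.
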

\begin{proof}
  \eqref{nona} comes  directly from the definition of $X(I)$ and $Y(I)$. \eqref{nond} from Lemma \ref{dedi}.
\end{proof}

In order to prove Lemma \ref{purtabation}, we primarily establish the short-time perturbation result.
\begin{lemma}[Short-time perturbation]
 Let $d\geq 4,$  $I$ be a compact interval, $\tilde{u}: I\times \R^d\rightarrow \mathbb{C}$ be solution to the equation
  \begin{gather}
  \begin{split}
  \left\{\begin{array}{cc}
    (i\partial_t+\Delta)\tilde{u}=F(\tilde u)+e\\
    \tilde{u}(0)=\tilde{u}_0\in \dot{H}^{s_c}.
    \end{array}\right.
    \end{split}
  \end{gather}
  Suppose
  $$\|\tilde{u}\|_{L_t^\infty \dot{H}^{s_c}(I\times \R^d)}\leq E.$$
  Let $0\in I$ and $u_0 \in \dot{H}^{s_c}_x(\R^d)$. Then there exits $\varepsilon_0, \delta>0$(depending on $E$) with the following properties hold: $0<\varepsilon<\varepsilon_0$, if
  \begin{align}\label{small}
  \|\tilde u\|_{X(I)}\leq \delta,
    \|u_0-\tilde{u}_0\|_{\dot{H}^{s_c}}+\|e\|_{Y'(I)}\leq \varepsilon,
  \end{align}
  then there exits $u: I\times \R^d\rightarrow \mathbb{C}$ solving $(i\partial_t+\Delta)u=|u|^pu \quad \text{with}\quad  u(0)=u_0$. satisfying
  \begin{align}
    \||\nabla|^{s_c}(u-\tilde u)\|_{S(I)}\lesssim \varepsilon^c, \label{non1}\\
    \||\nabla|^{s_c}u\|_{S(I)}\lesssim E,\label{non2}\\
    \||\nabla|^{s_c}(|u|^pu-|\tilde u|\tilde u)\|_{N(I)}\lesssim \varepsilon^c \label{non3}
  \end{align}
  where $c>0$ is a given constant.
\end{lemma}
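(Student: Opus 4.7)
The plan is to set $w = u - \tilde u$ and extract the smallness via a bootstrap on Strichartz norms. The difference $w$ solves
\begin{equation*}
(i\partial_t + \Delta) w = [F(\tilde u + w) - F(\tilde u)] - e, \qquad w(0) = u_0 - \tilde u_0,
\end{equation*}
so the standard Strichartz estimate at regularity $s_c$ yields
\begin{equation*}
\||\nabla|^{s_c} w\|_{S(I)} \lesssim \|u_0 - \tilde u_0\|_{\dot H^{s_c}} + \||\nabla|^{s_c}[F(\tilde u + w) - F(\tilde u)]\|_{N(I)} + \|e\|_{Y'(I)}.
\end{equation*}
The last term is absorbed immediately into $\varepsilon$ by hypothesis.

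First I would control the nonlinear difference by invoking Lemma \ref{dedi} to distribute $|\nabla|^{s_c}$ onto one factor at a time and Sobolev embedding $\dot H^{s_c, r}_x \hookrightarrow L^{r^\ast}_x$ to pass into the scale-invariant $X(I)$-norm, obtaining schematically
\begin{equation*}
\||\nabla|^{s_c}[F(\tilde u + w) - F(\tilde u)]\|_{N(I)} \lesssim \|\tilde u\|_{X(I)}^{\frac{4}{d-2s_c}} \||\nabla|^{s_c} w\|_{S(I)} + \|w\|_{X(I)}^{\frac{4}{d-2s_c}} \bigl(\||\nabla|^{s_c}\tilde u\|_{S(I)} + \||\nabla|^{s_c} w\|_{S(I)}\bigr).
\end{equation*}
The Persistence-of-Regularity lemma applied to $\tilde u$ yields $\||\nabla|^{s_c}\tilde u\|_{S(I)} \leq C(E)$, Sobolev embedding gives $\|w\|_{X(I)} \lesssim \||\nabla|^{s_c} w\|_{S(I)}$, and the assumption $\|\tilde u\|_{X(I)} \leq \delta$ supplies a small prefactor in front of the linear-in-$w$ term. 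Setting $X_w := \||\nabla|^{s_c} w\|_{S(I)}$, one is reduced to the closed inequality
\begin{equation*}
X_w \leq C\varepsilon + C\delta^{\frac{4}{d-2s_c}} X_w + C(E)\, X_w^{\frac{4}{d-2s_c}} + C\, X_w^{1+\frac{4}{d-2s_c}}.
\end{equation*}
Choosing $\delta$ sufficiently small to absorb the second term on the right and running the standard continuity argument on subintervals exhausting $I$, I conclude $X_w \lesssim \varepsilon^c$ for some $c = c(d,s_c) > 0$, giving \eqref{non1}. The existence of an actual solution $u$ to the unperturbed NLS with data $u_0$ follows from the same estimate interpreted as a contraction for the Duhamel map sending $w \mapsto $ the Duhamel iterate of $[F(\tilde u + w) - F(\tilde u)] - e$ with initial value $u_0 - \tilde u_0$. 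Estimate \eqref{non2} then follows from the triangle inequality together with Persistence of Regularity for $\tilde u$, while \eqref{non3} is a reapplication of Lemma \ref{dedi} with $u = \tilde u + w$.

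The main obstacle is the subquadratic nonlinearity that occurs for $d \geq 5$: there one has $p = \frac{4}{d-2s_c} < 1$, so $F(u) = |u|^p u$ is merely Hölder (of order $1+p$) rather than $C^1$, and the naive expansion $F(\tilde u + w) - F(\tilde u) \approx F'(\tilde u) w$ is unavailable. This forces the use of the fractional chain rule for Hölder continuous functions (Lemma \ref{Holderd}) inside the proof of Lemma \ref{dedi}, as well as the carefully chosen exponents defining $X(I)$, $Y(I)$, $Y'(I)$ — these are precisely the Lorentz-type Strichartz spaces whose scaling matches so that $\frac{4}{d-2s_c}$-powers may be freely exchanged between an $X(I)$-norm and the $S(I)$-norm via Sobolev embedding. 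The only delicate point is the exponent bookkeeping needed to ensure that each nonlinear term on the right-hand side carries a positive power of either $\delta$ or $X_w$, so that the bootstrap actually closes.
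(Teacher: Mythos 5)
Your bootstrap cannot close as written, and the reason is precisely the subquadratic exponent you flag at the end. After Sobolev embedding replaces $\|w\|_{X(I)}$ by $X_w := \||\nabla|^{s_c}w\|_{S(I)}$, your closed inequality contains the term $C(E)\,X_w^{4/(d-2s_c)}$ with exponent $q := \frac{4}{d-2s_c} < 1$ when $d\ge 5$. A term of the form $A x^q$ with $q<1$ and $A$ of size $O(E)$ cannot be absorbed by a continuity argument: the function $g(x) = C\varepsilon + A x^q - x$ is positive on an interval $[0, x_+]$ whose right endpoint $x_+$ is of order $A^{1/(1-q)}$, not of order $\varepsilon^c$, so the a priori inequality $X_w \le C\varepsilon + A X_w^q + (\text{absorbable})$ is consistent with $X_w$ being $O(1)$ and tells you nothing about smallness. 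In other words, the sublinear growth of $x^q$ near the origin is the enemy, not the friend: $X_w^q \gg X_w$ for $X_w$ small, and the coefficient is $C(E)$, not $\delta$.

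The paper sidesteps this by running the argument in two stages. First it closes the \emph{undifferentiated} estimate in $X(I)$: here the analogous inequality is $\|w\|_{X(I)} \lesssim \varepsilon + \bigl(\|\tilde u\|_{X(I)}^{q} + \|u\|_{X(I)}^{q}\bigr)\|w\|_{X(I)} \lesssim \varepsilon + \delta^q \|w\|_{X(I)}$, where now the small coefficient in front of $\|w\|_{X(I)}$ comes from $\delta$, not from an unknown power of $\|w\|$. This closes cleanly and yields $\|w\|_{X(I)} \lesssim \varepsilon$. Only \emph{then} does one pass to the derivative estimate \eqref{nond}, in which the first term becomes $\||\nabla|^{s_c}\tilde u\|_{S(I)}\,\|w\|_{X(I)}^q \lesssim C(E)\,\varepsilon^q$ — a \emph{known} quantity, not a power of the unknown $X_w$ — and the second term $\|u\|_{X(I)}^q\,X_w \lesssim \delta^q X_w$ is absorbed. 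This gives $X_w \lesssim \varepsilon + C(E)\varepsilon^q \lesssim \varepsilon^c$ with $c = \min\{1, q\}$. So the essential missing idea in your proposal is: do not convert $\|w\|_{X(I)}$ into $X_w$ via Sobolev embedding; instead establish the $X(I)$-bound on $w$ independently as a preliminary step and feed it in as a known small input. (A secondary remark: for $d\ge 5$ the nonlinearity is only Hölder, so the ``contraction'' you invoke for existence of $u$ is not literally a contraction; the usual route is to appeal to the local theory already in hand and use these estimates only for the stability bounds.)
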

\begin{proof}
First, we show that $\|u\|_{X(I)}\lesssim  \delta$. Indeed by Duhamel formula\eqref{eq:duhamel}
\begin{align}\label{initial1}
  \|e^{it\Delta}\tilde{u}_0\|_{X(I)}&\lesssim \|\tilde u\|_{X(I)}+\|F(\tilde u)\|_{Y(I)}+\|e\|_{Y'(I)}\\
  &\lesssim \delta+\delta^{1+\f{4}{d-2s_c}}+\varepsilon.
\end{align}
By \eqref{small} and triangle inequality we have
\begin{align}\label{initial2}
  \|e^{it\Delta}u_0\|_{X(I)}\lesssim \delta.
\end{align}
Then using Strichartz estimate \eqref{stris}  and \eqref{nona}, we have
\begin{align*}
  \|u\|_{X(I)}\lesssim  \delta +\|F(u)\|_{Y(I)}\\
  \lesssim  \delta +\|u\|_{X(I)}^{1+\f{4}{d-2s_c}}.
\end{align*}
By continuity  argument we have $\|u\|_{X(I)}\lesssim  \delta$.

We let $w=u-\tilde u$, thus  $w$ satisfies
\begin{align}
  (i\partial_t+\Delta)w=F(u)-F(\tilde u)-e\qquad  w(0)=u_0-\tilde u_0.
\end{align}
By Strichartz estimate \eqref{stris} we have
\begin{align*}
  \|w\|_{X(I)}&\lesssim \|e^{it\Delta}w(0)\|_{X(I)}+\|e\|_{Y(I)}+\|F(u)-F(v)\|_{Y(I)}\\
 & \lesssim \varepsilon +\{\|\tilde u\|_{X(I)}^{\f{4}{d-2s_c}}+\|u\|_{X(I)}^{\f{4}{d-2s_c}}\}\|w\|_{X(I)}\\
 &\lesssim \varepsilon +\delta^{\f{4}{d-2s_c}}\|w\|_{X(I)}.
\end{align*}
Thus by choosing $\delta$ sufficiently small, we have
\begin{align}\label{esww}
\|w\|_{X(I)}\lesssim \varepsilon.
\end{align}
By Strichartz estimate and \eqref{small} and \eqref{nond} we have
\begin{align*}
  \||\nabla|^{s_c}w\|_{S(I)}&\lesssim \|u_0-\tilde u_0\|_{\dot{H}^{s_c}}+\|e\|_{Y'(I)}+\||\nabla|^{s_c}[F(u)-F(\tilde u)]\|_{L_{t,x}^{\f{2(d+2)}{d+4}}}\\
  &\lesssim \varepsilon+\||\nabla|^{s_c}\tilde u\|_{S(I)}\|w\|_{X(I)}^{\f{4}{d-2s_c}}+\||\nabla|^{s_c}w\|_{S(I)}\|u\|_{X(I)}^{\f{4}{d-2s_c}}.
\end{align*}
By \eqref{small} and the persistence of regularity results,  we have $\||\nabla|^{s_c}\tilde u\|_{S(I)}\leq C(\delta)E$.
 For \eqref{non2}, by \eqref{non1} and Strichartz estimate  we have
 \begin{align*}
    &\quad \||\nabla|^{s_c}u\|_{S(I)}\\
   &\lesssim \varepsilon^c+\||\nabla|^{s_c}\tilde u_0\|_{L_x^2}+\|(|\tilde u|^p \tilde u)\|_{Y'(I)}+\|e\|_{Y'(I)}\\
   &\lesssim \varepsilon^c +E+\|\tilde u\|_{X(I)}^{\f{4}{d-2s_c}}\|\tilde u\|_{X'(I)}\\
   &\lesssim \varepsilon^c+ E+C(\delta)E\\
   &\lesssim E.
 \end{align*}
Now \eqref{non3} can be deduced from Lemma \ref{dedi} and \eqref{non1}.
\end{proof}
{\bf Proof of Lemma \ref{purtabation}.}
First note that $\|\tilde {u}\|_{L_{t,x}^{\f{(d+2)p}{2}}(I\times \R^d)}\leq L$, by the persistence of regularity, we have $\|\tilde{u}\|_{X(I)}\lesssim C(E,L)$. Then we may subdivide $I$ into (finitely many, depending on $\delta$ and $L$) intervals $J_k=[t_k, t_{k+1})$ so that
\begin{align}
  \|\tilde u\|_{X(I)}\sim \delta.
\end{align}
then we can use the short-time perturbation results and bootstrap argument to obtain Lemma \ref{purtabation}.

\bibliographystyle{amsplain}

\end{document}